\spnewtheorem{ctn}{Caution}{\it}{}
\begin{document}

\title{Twisted complexes on a ringed space as a dg-enhancement of the derived category of perfect complexes}
\author{Zhaoting Wei}
\institute {Department of Mathematics, Indiana University, 831 E 3rd St, Bloomington, IN 47405, USA\\
\email {zhaotwei@indiana.edu}}


\titlerunning{Twisted complexes and dg-enhancement}

\maketitle

\begin{abstract}
In this paper we study the dg-category of twisted complexes on a ringed space and prove that it gives a new dg-enhancement of the derived category of perfect complexes on that space. A twisted complex  is a collection of locally defined sheaves together with the homotopic gluing data. In this paper we construct a dg-functor from twisted complexes to perfect complexes, which turns out to be a dg-enhancement. This new enhancement has the advantage of being completely geometric and it comes directly from the definition of perfect complex. In addition we will talk about some applications and further topics around twisted complexes. 
\end{abstract}

\keywords{twisted complex, dg-enhancement, perfect complex, higher structure}
\subclass{14F05, 16E45, 18E30}

\section*{Acknowledgement}
The author would like to thank Jonathan Block for introducing the author to this topic and for numerous discussions. He also wants to thank Valery Lunts,  Olaf Schn\"{u}rer, Julian Holstein, Ian Shipman and Shilin Yu for very helpful conversations and comments and thank Leo Alonso, Adeel Khan,  Daniel Miller, Denis Nardin,  and Jeremy Rickard for answering questions related to this work. The author also want to thank the referee for his very careful work.

\setcounter{tocdepth}{2}
\tableofcontents

\section{INTRODUCTION}
The derived categories of perfect complexes and pseudo-coherent complexes on ringed topoi were introduced in SGA 6 \cite{berthelot1966seminaire}. They have played an important role in mathematics ever since. Nevertheless  we would like to consider the differential graded (dg)-\emph{enhancements} of these derived categories. More precisely we have the following definition.

\begin{definition}\label{defi: dg-enhancement}
Let $\mathcal{C}$ be a triangulated category. A dg-enhancement of $\mathcal{C}$ is a pair $(\mathcal{B}, \varepsilon)$ where $\mathcal{B}$ is a pre-triangulated dg-category and
$$
\varepsilon: Ho \mathcal{B}\overset{\sim}{\to} \mathcal{C}
$$
is an equivalence of triangulated categories. Here $Ho \mathcal{B}$ is the homotopy category of $\mathcal{B}$.
\end{definition}

For the derived category $D_{\text{perf}}(X)$ on a (quasi-compact and separated) scheme $X$ we have the classical \emph{injective enhancement}, which consists of h-injective objects, see \cite{lunts2014new} Section 3.1. Although very useful, the injective resolution has its drawback that the modules are too "large" and the construction is not geometric. Therefore we are seeking for a new, more geometric dg-enhancement.

In the late 1970's  Toledo and Tong \cite{toledo1978duality} introduced twisted complexes as a way to get their hands on perfect complexes of sheaves on a complex manifold and implicitly they recognized this was a dg-model for the derived category of perfect complexes.  In this paper we prove in all details that twisted complexes form a dg-model for categories of perfect complexes (and more generally pseudo-coherent complexes) of sheaves on a ringed space under some conditions.

Let us first give an informal description to illustrate the idea of twisted complexes. Recall that a complex of sheaves $\mathcal{S}^{\bullet}$ on $X$ is \emph{perfect} if for any point $x\in X$, there exists an open neighborhood $x\in U\subset X$ and a two-side bounded complex of finitely generated locally free sheaves $E^{\bullet}_U$ on $U$ together with a quasi-isomorphism
$$
\theta_U: E^{\bullet}_U \overset{\sim}{\to} \mathcal{S}^{\bullet}|_U.
$$

For two different open subsets $U_i$ and $U_j$ we have two quasi-isomorphisms
$$
\theta_i: E^{\bullet}_{U_i} \overset{\sim}{\to} \mathcal{S}^{\bullet}|_{U_i}
$$
and
$$
\theta_j: E^{\bullet}_{U_j} \overset{\sim}{\to} \mathcal{S}^{\bullet}|_{U_j}.
$$
For simplicity we denote $E^{\bullet}_{U_i}$ by $E^{\bullet}_i$ and $U_i\cap U_j$ by $U_{ij}$. Hence on $U_{ij}$ we have
$$
\begin{tikzcd}[column sep=small]
E^{\bullet}_i|_{U_{ij}}  \arrow {dr}{\sim}[swap]{\theta_i}& &E^{\bullet}_j|_{U_{ij}} \arrow{dl}{\theta_j}[swap]{\sim}\\
& \mathcal{S}^{\bullet}|_{U_{ij}} &
\end{tikzcd}
$$

Since $E^{\bullet}_i$ and $E^{\bullet}_j$ are bounded and locally free, we can refine the open cover if necessary and lift the identity map on $\mathcal{S}^{\bullet}|_{U_{ij}}$ (under some assumptions on $\mathcal{S}^{\bullet}$, see Lemma \ref{lemma: homotopy inverse on good covers} below) to a map $a_{ji}: E^{\bullet}_j\to E^{\bullet}_i$, i.e. the following diagram
$$
\begin{tikzcd}[column sep=small]
E^{\bullet}_i|_{U_{ij}} \arrow{rr}{a_{ji}} \arrow {dr}{\sim}[swap]{\theta_i}& &E^{\bullet}_j|_{U_{ij}} \arrow{dl}{\theta_j}[swap]{\sim}\\
& \mathcal{S}^{\bullet}|_{U_{ij}} &
\end{tikzcd}
$$
commutes \emph{up to homotopy}.

It is expected that the $a_{ji}$'s play the role of transition functions, but we will show that they do not. Consider a third open subset $U_k$ together with $E^{\bullet}_k$ on it. According to the discussion above, the following diagram
$$
\begin{tikzcd}
E^{\bullet}_i|_{U_{ijk}} \arrow{rr}{a_{ji}}  \arrow {ddr}[swap]{a_{ki}}\arrow{dr}{\theta_i}& &E^{\bullet}_j|_{U_{ijk}} \arrow{ddl}{a_{kj}}\arrow{dl}[swap]{\theta_j}\\
&S^{\bullet}|_{U_{ijk}}&\\
& E^{\bullet}_k|_{U_{ijk}}\arrow{u}{\theta_k} &
\end{tikzcd}
$$
commutes  \emph{up to homotopy}. More precisely, we have a degree $-1$ map $a_{kji}: E^{\bullet}_i\to E^{\bullet-1}_k$ on $U_{ijk}$ such that
 $$
 a_{ki}-a_{kj}a_{ji}=[d, a_{kji}]
 $$
 where $d$ is the differential on $E^{\bullet}_i$ and $E^{\bullet}_k$. In other words, the $a_{ji}$'s only satisfy the cocycle condition \emph{up to homotopy}. Hence we cannot simply use them to glue the $E^{\bullet}_i$'s into a complex of  sheaves on $X$. On the other hand we   expect that the homotopy operator $a_{kji}$'s satisfy  compatible relations up to  higher homotopies.

 Toledo and Tong in \cite{toledo1978duality} show that all these compatibility data together satisfy the Maurer-Cartan equation
$$
 \delta a+a\cdot a=0
$$
which will be explained in Section \ref{section: review of twisted complex}. They call a collection $E^{\bullet}_i$ together with such maps $a$'s a \emph{twisted complex} or \emph{twisted cochain}. In this paper we call it \emph{twisted perfect complex} and keep the term twisted complex for a more general concept (For precise definition, see Section \ref{section: review of twisted complex} of this paper). Moreover, O'Brian, Toledo and Tong have proved that every perfect complex has a \emph{twisted resolution}, see  Proposition 1.2.3 in \cite{o1985grothendieck} or Proposition \ref{prop: existence of resolution} in this paper. This result is closely related to the essential-surjectivity   of a dg-enhancement. Nevertheless, they have not attempted to build any equivalence of categories.

In this paper we  construct a \emph{sheafification functor} which is a dg-functor
$$
\mathcal{S}: \text{Tw}_{\text{perf}}(X)\to \text{Qcoh}_{\text{perf}}(X)
$$
where  $\text{Tw}_{\text{perf}}(X)$ denotes the dg-category of twisted perfect  complexes on $X$ and $\text{Qcoh}_{\text{perf}}(X)$ denotes the dg-category of perfect complexes of quasi-coherent sheaves on $X$.

We will prove that the dg-functor $\mathcal{S}$ gives the expected dg-enhancement.

\begin{theorem}\label{thm: dg-enhancement in the introduction}[See Theorem \ref{thm: dg-enhancement} below]
Under reasonable conditions, the sheafification functor  induces an  equivalence  of categories
$$
\mathcal{S}: Ho\text{Tw}_{\text{perf}}(X)\to D_{\text{perf}}(\text{Qcoh}(X)).
$$
\end{theorem}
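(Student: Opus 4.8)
The plan is to establish the equivalence by checking that $\mathcal{S}$ is \emph{fully faithful} and \emph{essentially surjective} on homotopy categories. Essential surjectivity is essentially handed to us: by the existence of twisted resolutions (Proposition~\ref{prop: existence of resolution}, the O'Brian--Toledo--Tong result), every perfect complex $\mathcal{S}^\bullet$ of quasi-coherent sheaves admits a twisted perfect complex $E$ together with a quasi-isomorphism $\mathcal{S}(E)\xrightarrow{\sim}\mathcal{S}^\bullet$, so $\mathcal{S}(E)$ and $\mathcal{S}^\bullet$ become isomorphic in $D_{\text{perf}}(\mathrm{Qcoh}(X))$. Hence the real content is full faithfulness, i.e. that for any two twisted perfect complexes $E$ and $F$ the map of complexes
$$
\mathcal{S}\colon \mathrm{Hom}^\bullet_{\mathrm{Tw}_{\text{perf}}(X)}(E,F)\longrightarrow \mathrm{Hom}^\bullet_{\mathrm{Qcoh}_{\text{perf}}(X)}(\mathcal{S}(E),\mathcal{S}(F))
$$
is a quasi-isomorphism of complexes of abelian groups; passing to $H^0$ then gives full faithfulness on homotopy categories, and since both sides are pre-triangulated dg-categories and $\mathcal{S}$ is a dg-functor, an exact fully faithful and essentially surjective functor on homotopy categories is automatically a triangulated equivalence.

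To prove the quasi-isomorphism on Hom-complexes I would use a dévissage argument. First I would reduce to the case where $E$ and $F$ are, up to the twisting cochain, built out of genuine bounded-above complexes of locally free sheaves on the members of a good cover: the Hom-complex on the twisted side is by construction a totalized \v{C}ech-type complex $\bigoplus_{p\ge 0}\check{C}^p(\mathfrak{U}, \mathcal{H}om^\bullet(E,F))$ with the twisted differential, and the Hom-complex on the sheaf side computes $R\mathrm{Hom}$. So the statement becomes: the totalization of the \v{C}ech complex of the local Hom-sheaves computes the hypercohomology $R\Gamma(X, R\mathcal{H}om(\mathcal{S}(E),\mathcal{S}(F)))$. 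This is a \v{C}ech-to-derived-functor comparison, which holds because on a good cover (one fine enough that the relevant Hom-sheaves have vanishing higher cohomology on all finite intersections — the conditions alluded to in Lemma~\ref{lemma: homotopy inverse on good covers} and the standing ``reasonable conditions'') the \v{C}ech complex computes sheaf cohomology, combined with the fact that a bounded complex of locally frees is perfect so $R\mathcal{H}om$ is computed termwise. One then feeds this into a spectral sequence (or a careful filtration argument on the \v{C}ech degree) to conclude the totalizations agree.

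The step I expect to be the main obstacle is controlling the \emph{unboundedness} and the interaction between the \v{C}ech filtration and the internal grading of the twisted complexes: a twisted perfect complex is not literally a complex but a Maurer--Cartan element, so the differential on $\mathcal{S}(E)$ is genuinely twisted by the cochain $a$, and one must check that the comparison of totalizations is compatible with these extra terms — i.e. that the twisting cochain on the $\mathrm{Tw}$ side maps exactly to the corrected gluing differential on the $\mathrm{Qcoh}$ side under $\mathcal{S}$, and that no convergence issue arises when totalizing the (a priori infinite in the \v{C}ech direction) bicomplex. I would handle this by first treating the ``untwisted'' associated graded, where the comparison is the classical \v{C}ech theorem, and then promoting it to the twisted case via a compatible exhaustive filtration whose spectral sequence degenerates appropriately; the key lemma to isolate and prove carefully is that, on a sufficiently fine good cover, $\check{H}^{>0}(\mathfrak{U}, \mathcal{H}om^\bullet(E^\bullet_i, F^\bullet_j))$ vanishes so that the spectral sequence collapses onto the correct column, after which full faithfulness drops out.
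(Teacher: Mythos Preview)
Your essential surjectivity argument matches the paper's. The divergence --- and the gap --- is in full faithfulness.

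You propose to show that the dg-functor $\mathcal{S}$ induces a quasi-isomorphism
\[
\mathrm{Hom}^\bullet_{\mathrm{Tw}_{\mathrm{perf}}(X)}(E,F)\longrightarrow \mathrm{Hom}^\bullet_{\mathrm{Qcoh}_{\mathrm{perf}}(X)}(\mathcal{S}(E),\mathcal{S}(F)),
\]
and then take $H^0$. But $H^0$ of the right-hand side is $\mathrm{Hom}_{K(\mathrm{Qcoh}(X))}(\mathcal{S}(E),\mathcal{S}(F))$, not $\mathrm{Hom}_{D(\mathrm{Qcoh}(X))}(\mathcal{S}(E),\mathcal{S}(F))$: the target of the theorem is the \emph{derived} category, a localization of the homotopy category, and the two differ unless $\mathcal{S}(E)$ is h-projective or $\mathcal{S}(F)$ is h-injective. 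You assert ``the Hom-complex on the sheaf side computes $R\mathrm{Hom}$'', but $\mathcal{S}(E)$ is a product of pushforwards $j_*(E^q_{i_0}|_{U_{i_0\ldots i_p}})$ from open subsets --- not a bounded complex of global locally frees --- and there is no reason given for it to be h-projective in $\mathrm{Qcoh}(X)$. Your \v{C}ech/spectral-sequence argument could plausibly show that $\mathrm{Hom}^\bullet_{\mathrm{Tw}}(E,F)$ models $R\mathrm{Hom}(\mathcal{S}(E),\mathcal{S}(F))$, but that is a different statement from the displayed map being a quasi-isomorphism, and you would still have to check that this identification is the one induced by the functor $\mathcal{S}$ followed by localization.

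The paper avoids this issue entirely. It does \emph{not} attempt to show $\mathcal{S}$ is quasi-fully-faithful at the dg level. Instead it introduces a \emph{twisting functor} $\mathcal{T}:\mathrm{Sh}(X)\to\mathrm{Tw}(X)$ (restriction to the cover with trivial higher $a$'s), together with natural comparison maps $\tau_P:P\to\mathcal{S}\mathcal{T}(P)$ (a quasi-isomorphism) and $\gamma_{\mathcal{E}}:\mathcal{T}\mathcal{S}(\mathcal{E})\to\mathcal{E}$ (a weak equivalence). Fullness and faithfulness are then proved separately and directly at the level of derived-category morphisms: a morphism $\mathcal{S}(\mathcal{A})\to\mathcal{S}(\mathcal{B})$ in $D_{\mathrm{perf}}(\mathrm{Qcoh}(X))$ is represented by a roof $\mathcal{S}(\mathcal{A})\xleftarrow{\sim}\mathcal{P}\to\mathcal{S}(\mathcal{B})$; one applies $\mathcal{T}$, replaces $\mathcal{T}(\mathcal{P})$ by a twisted perfect complex via the twisted resolution, composes with the $\gamma$'s, and finally inverts the resulting weak equivalence $\mathcal{E}\xrightarrow{\sim}\mathcal{A}$ using the key fact (Proposition~\ref{prop: homotopy invertible morphisms}) that weak equivalences between twisted \emph{perfect} complexes are homotopy invertible. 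Faithfulness is handled by the same machinery. The homotopy-versus-derived distinction is thus absorbed into the roof calculus and the invertibility of weak equivalences on the twisted side, rather than into any h-projectivity claim about $\mathcal{S}(E)$.
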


We would also like to consider perfect  complexes of general $\mathcal{O}_X$-modules rather than quasi-coherent modules. Actually we have
\begin{theorem}\label{thm: dg-enhancement of A-modules in the introduction}[See Theorem \ref{thm: dg-enhancement of A-modules} below]
Under some additional conditions, the sheafification functor  induces an equivalence  of categories
$$
\mathcal{S}: Ho\text{Tw}_{\text{perf}}(X)\to D_{\text{perf}}(X).
$$
\end{theorem}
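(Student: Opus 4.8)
The strategy is to bootstrap from Theorem~\ref{thm: dg-enhancement in the introduction}, the quasi-coherent case, by comparing $D_{\text{perf}}(\text{Qcoh}(X))$ with $D_{\text{perf}}(X)$. By construction the sheafification of a twisted perfect complex is already a genuine complex of quasi-coherent $\mathcal{O}_X$-modules, so the functor $\mathcal{S}$ factors as
\[
Ho\text{Tw}_{\text{perf}}(X)\ \xrightarrow{\ \mathcal{S}\ }\ D_{\text{perf}}(\text{Qcoh}(X))\ \xrightarrow{\ j\ }\ D_{\text{perf}}(X),
\]
where $j$ is induced by the (exact) inclusion of abelian categories $\text{Qcoh}(X)\hookrightarrow\text{Mod}(\mathcal{O}_X)$; this $j$ is well defined because a bounded complex of finitely generated locally free sheaves represents a perfect object regardless of the ambient abelian category, perfectness being local on $X$. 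The first arrow is an equivalence by Theorem~\ref{thm: dg-enhancement in the introduction}, so everything reduces to showing that $j$ is an equivalence.

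For essential surjectivity, let $P$ be an object of $D_{\text{perf}}(X)$. Locally on $X$ it is quasi-isomorphic to a bounded complex of finite free modules, hence each cohomology sheaf $\mathcal{H}^n(P)$ is quasi-coherent; write $D_{\text{Qcoh}}(\mathcal{O}_X)\subset D(\text{Mod}(\mathcal{O}_X))$ for the full subcategory of complexes with this property. Under the additional hypotheses on $X$ --- which I take to be that $X$ is a quasi-compact and separated scheme, these being exactly what is needed below --- the canonical functor $D(\text{Qcoh}(X))\to D_{\text{Qcoh}}(\mathcal{O}_X)$ is an equivalence of triangulated categories (a theorem of B\"okstedt--Neeman and its refinements; see also the Stacks Project). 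Hence $P\simeq Q$ in $D(\text{Mod}(\mathcal{O}_X))$ for some $Q$ coming from $D(\text{Qcoh}(X))$, and such a $Q$ is perfect in $D(\text{Qcoh}(X))$ precisely because $P$ is perfect: on a quasi-compact and separated scheme ``perfect'' coincides with ``compact'', a purely categorical notion preserved by the equivalence. Thus $P$ lies in the essential image of $j$. For full faithfulness, note that $D_{\text{perf}}(\text{Qcoh}(X))$ and $D_{\text{perf}}(X)$ are full triangulated subcategories of $D(\text{Qcoh}(X))$ and $D_{\text{Qcoh}}(\mathcal{O}_X)$ respectively, and that $j$ is the restriction of the equivalence just quoted; so $j$ is fully faithful. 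Composing the two equivalences gives the asserted equivalence $\mathcal{S}\colon Ho\text{Tw}_{\text{perf}}(X)\xrightarrow{\sim}D_{\text{perf}}(X)$.

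A more hands-on route would avoid the comparison theorem altogether: essential surjectivity for $D_{\text{perf}}(X)$ follows directly from the existence of twisted resolutions of arbitrary perfect complexes of $\mathcal{O}_X$-modules (Proposition~\ref{prop: existence of resolution}, after O'Brian--Toledo--Tong), while full faithfulness is proved just as in Theorem~\ref{thm: dg-enhancement in the introduction} --- by a Mayer--Vietoris / \v{C}ech descent computation of the Hom-complexes over a good cover --- only without restricting the coefficient sheaves to be quasi-coherent.

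In either approach the main obstacle is the same: controlling how ``perfect'' behaves when the coefficient sheaves are enlarged from $\text{Qcoh}(X)$ to all $\mathcal{O}_X$-modules. In the reduction this is the content of the B\"okstedt--Neeman-type equivalence $D(\text{Qcoh}(X))\simeq D_{\text{Qcoh}}(\mathcal{O}_X)$ together with the stability of perfectness under it, and the delicate point is pinning down exactly which ``additional conditions'' --- quasi-compactness, separatedness, or being a concentrated scheme, as opposed to a general ringed space --- make this hold and make the factorization above compute the functor in the statement. In the direct approach the same issue resurfaces as the need to check that the higher homotopy gluing data still assemble to a quasi-isomorphism when tested against non-quasi-coherent modules. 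Should the hypotheses force $X$ outside the world of schemes, one would need an analogue of the comparison theorem for the relevant class of ringed spaces, and proving that is where the real work would lie.
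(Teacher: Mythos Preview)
Your approach is essentially the same as the paper's: factor $\mathcal{S}$ through $D_{\text{perf}}(\text{Qcoh}(X))$, invoke the quasi-coherent case (Theorem~\ref{thm: dg-enhancement in the introduction}), and then use that the comparison functor $j\colon D_{\text{perf}}(\text{Qcoh}(X))\to D_{\text{perf}}(X)$ is an equivalence. The only cosmetic difference is that the paper packages this last step as an explicit hypothesis --- the \emph{perfect-equivalent condition} of Definition~\ref{defi: perfect-equivalent condition} --- and defers its verification (for quasi-compact semi-separated or Noetherian schemes, via the $D^{+}$ comparison of Proposition~\ref{prop: bounded below quasi-coherent and A-modules equivalent}) to the appendix, rather than arguing it out via B\"okstedt--Neeman and ``compact $=$ perfect'' as you do; note also that your alternative ``hands-on route'' would not go through verbatim, since Proposition~\ref{prop: existence of resolution} as stated applies only to perfect complexes with quasi-coherent components.
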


Here we briefly mention the strategy of the proof. We extend the dg-category of  twisted perfect complexes to a more general dg-category of  \emph{twisted complexes} on $X$ and define a \emph{twisting functor}
$$
\mathcal{T}: \text{Sh}(X)\to \text{Tw}(X)
$$
which is also a dg-functor, where Sh$(X)$ denotes the dg-category of sheaves on $X$ and Tw$(X)$ denotes the dg-category of twisted complexes on $X$. The essential-surjectivity and fully-faithfulness of $\mathcal{S}$ can be achieved by a careful study of the relations between $\mathcal{S}$ and $\mathcal{T}$.

The constructions and proofs are inspired by \cite{block2010duality} Section 4. In fact Block gives a Dolbeault-theoretic dg-enhancement of perfect complexes in \cite{block2010duality} while our construction can be considered as a \v{C}ech-theoretic enhancement.

\begin{remark}\label{remark: relation with Lunts-Schnurer introduction}
In \cite{lunts2014new} Lunts and Schn\"{u}rer introduced another dg-enhancement of the derived category of perfect complexes on a scheme and they call it \v{C}ech enhancement. Conceptually the enhancement in \cite{lunts2014new} is very similar to the twisted complexes in this paper and we will discuss the relations between them in Section \ref{subsection: fully faithfull}, Remark \ref{remark: relation with Lunts-Schnurer} below.
\end{remark}

This paper is organized as follows: In Section \ref{section: review of twisted complex} we give the definition of twisted (perfect) complexes. We show that these dg-categories have a pre-triangulated structure. Moreover we introduce weak equivalences between  twisted complexes.

In Section \ref{section: twisted complex and dg-enhancement} we construct the dg-enhancement. In more details, we construct the sheafification functor $\mathcal{S}$ in  Section \ref{subsection: sheafification functor}. In Section \ref{subsection: perfectness} we prove that the image of a twisted perfect  complex under $\mathcal{S}$ is really a perfect complex. In Section \ref{subsection: essentially surjective} we prove that $\mathcal{S}$ is essentially surjective and in Section \ref{subsection: fully faithfull} we prove that $\mathcal{S}$ is fully faithful. Hence $\mathcal{S}$ gives the dg-enhancement.

In Section \ref{section: applications} we talk about some applications of twisted complexes. In particular we illustrate the application in descent theory.

In Section \ref{section: further topics} we talk about some further topics. In Section \ref{subsection: twisted coherent complexes} we introduce  the \emph{twisted coherent complexes} and prove that they form a dg-enhancement of the derived category of bounded above complexes of coherent sheaves. Actually the proofs are the same as those for twisted perfect complexes.

In Section \ref{subsection: splitting} we make a digression and discuss the degenerate twisted complexes and show how they give splitting of idempotents.

In Section \ref{subsection: quillen adjunction} we outline an alternative  approach to this object:  We wish to put a suitable model structure on  twisted complexes and view $\mathcal{S}$ and $\mathcal{T}$ in terms of \emph{Quillen adjunctions}.

In Appendix \ref{appendix: quasi-coherent, pseudo-coherent complexes and coherent sheaves} we compare coherent complexes and pseudo-coherent complexes. Moreover we study the relation between quasi-coherent modules and general $\mathcal{O}_X$-modules.

To ensure Theorem \ref{thm: dg-enhancement in the introduction} we need that the open cover $\{U_i\}$ of $X$ is fine enough, in Appendix \ref{appendix: good covers} we discuss good covers of a ringed space $X$.

\section{A REVIEW OF TWISTED COMPLEXES}\label{section: review of twisted complex}

\subsection{A quick review of perfect complexes}
Before talking about twisted complexes, we give a quick review of the derived category of perfect complexes and fix the notations in this subsection. For more details see \cite{thomason1990higher} and \cite{stacks-project}.

\begin{definition}\label{defi: perfect complex}
Let $(X,\mathcal{O}_X)$ be a locally ringed space. A complex $\mathcal{S}^{\bullet}$ is \emph{strictly perfect} if $\mathcal{S}^i$ is zero for all but finitely many $i$ and
$\mathcal{S}^i$ is a direct summand of a finite free
$\mathcal{O}_X$-module for all $i$. The second condition is equivalent to that $\mathcal{S}^i$ is a   finite locally free
$\mathcal{O}_X$-module for all $i$.

Moreover  A complex $\mathcal{S}^{\bullet}$ of $\mathcal{O}_X$-modules is \emph{perfect} if for any point $x\in X$, there exists an open neighborhood $U$ of $x$ and a strictly perfect complex  $\mathcal{E}^{\bullet}_{U}$ on $U$ such that the restriction $\mathcal{S}^{\bullet}|_U$ is isomorphic to $\mathcal{E}^{\bullet}_{U}$ in $D(\mathcal{O}_U-\text{mod})$, the derived category of sheaves of $\mathcal{O}_X$-modules on $U$.
\end{definition}

\begin{ctn}
If we did not assume that $X$ is a locally ringed space,
then it may not be true that a direct summand of a finite free
$\mathcal{O}_X$-module is finite locally free. See \cite[\href{http://stacks.math.columbia.edu/tag/08C3}{Tag 08C3}]{stacks-project}.
\end{ctn}

\begin{remark}\label{remark: stronger defi of perfect complex}
In fact, the definition of perfect complex is equivalent to the stronger requirement that for any point $x\in X$, there exists an open neighborhood $U$ of $x$ and a bounded complex of finite rank locally free sheaves  $\mathcal{E}^{\bullet}_{U}$ on $U$ together with a quasi-isomorphism
$$
\mathcal{E}^{\bullet}_{U}\overset{\sim}{\to} \mathcal{S}^{\bullet}|_U.
$$
See \cite[\href{http://stacks.math.columbia.edu/tag/08C3}{Tag 08C3}]{stacks-project} Lemma 20.38.8 for details.
\end{remark}

\begin{remark}
It is obvious that a strictly perfect complex must be perfect. However, on a general ringed space $(X,\mathcal{O}_X)$  perfect complexes are not necessarily strictly perfect. In \cite{lunts2014new} Section 2.3 Lunts and Schn\"{u}rer say that the scheme $X$ satisfies condition GSP if every perfect complex on $X$ is quasi-isomorphic to a strictly perfect complex. It can be proved that any affine scheme or projective scheme or  separated regular Noetherian scheme satisfies condition GSP, see \cite{berthelot1966seminaire} Expos\'{e} II Proposition 2.2.7 and Proposition 2.2.9, or \cite{thomason1990higher} Example 2.1.2 and Proposition 2.3.1.
\end{remark}

We consider the following  categories.

\begin{definition}\label{defi: derived cat of perfect complex}
Let $\text{Sh}(X)$ be the dg-category of complexes of $\mathcal{O}_X$-modules on $X$. Let $\text{Sh}_{\text{perf}}(X)$ be the full dg-subcategory of  perfect complexes on $X$.

Let $K(X)$ be the homotopy category of complexes of $\mathcal{O}_X$-modules on $X$. Then  $K_{\text{perf}}(X)$ is the triangulated subcategories of $K(X)$ which consists of perfect complexes of $\mathcal{O}_X$-module.

Moreover let $D(X)$ be the derived category of complexes of $\mathcal{O}_X$-modules on $X$. Then  $D_{\text{perf}}(X)$ is the triangulated subcategory of $D(X)$ which consists of perfect complexes of $\mathcal{O}_X$-modules.
\end{definition}

We need to also consider the complexes of quasi-coherent sheaves on $X$ and we have the following definition.
\begin{definition}\label{defi: derived categories with quasi-coherent components}
Let $\text{Qcoh}(X)$ be the dg-category of complexes of quasi-coherent sheaves on $X$. It is clear that $\text{Qcoh}(X)$ is a full dg-subcategory of $\text{Sh}(X)$. Let $\text{Qcoh}_{\text{perf}}(X)$ be the full dg-subcategory of $\text{Qcoh}(X)$ which consists of perfect complexes of quasi-coherent sheaves. $\text{Qcoh}_{\text{perf}}(X)$ is also a full dg-subcategory of $\text{Sh}_{\text{perf}}(X)$.

Let $K(\text{Qcoh}(X))$ be the homotopy category of $\text{Qcoh}(X)$ and $D(\text{Qcoh}(X))$ be its derived category. Similarly we  have $K_{\text{perf}}(\text{Qcoh}(X))$ and $D_{\text{perf}}(\text{Qcoh}(X))$.
\end{definition}

\begin{remark}\label{remark: quasi-coherent component and general sheaves of A-modules}
We have the natural inclusion $i: \text{Qcoh}(X)\to \text{Sh}(X)$ which induces a functor
$$
\tilde{i}: D(\text{Qcoh}(X))\to D_{\text{Qcoh}}(X),
$$
where $D_{\text{Qcoh}}(X)$ is the derived category of complexes of $\mathcal{O}_X$-modules with quasi-coherent cohomologies. However for general $(X,\mathcal{O}_X)$ the functor $\tilde{i}$ is not  necessarily  essentially surjective nor fully faithful. As a result we need to distinguish complexes of quasi-coherent modules and complexes of general $\mathcal{O}_X$-modules.
This issue will be discussed further in Appendix \ref{appendix: quasi-coherent, pseudo-coherent complexes and coherent sheaves}.
\end{remark}

 \subsection{Notations of bicomplexes and sign conventions}
In this subsection we introduce some notations which are necessary in the definition of twisted complexes, for reference see \cite{o1981trace} Section 1.

Let $(X,\mathcal{O}_X)$ be a locally ringed   space with paracompact underlying topological space  and $\mathcal{U}=\{U_i\}$ be a locally finite open cover of $X$. Let $U_{i_0\ldots i_n}$ denote the intersection $U_{i_0}\cap\ldots \cap U_{i_n}$.

\begin{remark}
\cite{toledo1978duality}, \cite{o1981trace} and \cite{o1985grothendieck} focus on the special case that $X$ is a complex manifold and $\mathcal{O}_X$ is the sheaf of holomorphic functions on $X$. In this paper we consider more  general  $(X,\mathcal{O}_X)$.
\end{remark}

For each $U_{i_k}$, let $E^{\bullet}_{i_k}$ be a   graded sheaf of $\mathcal{O}_X$-modules on $U_{i_k}$. Let
\begin{equation}\label{equation: bigrade sheaves}
C^{\bullet}(\mathcal{U},E^{\bullet})=\prod_{p,q}C^p(\mathcal{U},E^q)
\end{equation}
be the bigraded complexes of $E^{\bullet}$. More precisely, an element $c^{p,q}$ of $C^p(\mathcal{U},E^q)$ consists of a section $c^{p,q}_{i_0\ldots i_p}$ of $E^{q}_{i_0}$ over each non-empty intersection $U_{i_0\ldots i_p}$. If $U_{i_0\ldots i_p}=\emptyset$, simply let the component on it be zero.

Now if another   graded sheaf $F^{\bullet}_{i_k}$ of $\mathcal{O}_X$-modules is given on each $U_{i_k}$, then we can consider the bigraded complex
\begin{equation}\label{equation: map with bigrade between graded sheaves}
C^{\bullet}(\mathcal{U},\text{Hom}^{\bullet}(E,F))=\prod_{p,q}C^p(\mathcal{U},\text{Hom}^q(E,F)).
\end{equation}
An element $u^{p,q}$ of $C^p(\mathcal{U},\text{Hom}^q(E,F))$ gives a section $u^{p,q}_{i_0\ldots i_p}$ of $\text{Hom}^q_{\mathcal{O}_X-\text{Mod}}(E^{\bullet}_{i_p},F^{\bullet}_{i_0})$, i.e. a degree $q$ map from $E^{\bullet}_{i_p}$ to $F^{\bullet}_{i_0}$ over the non-empty intersection $U_{i_0\ldots i_p}$. Notice that we require $u^{p,q}$ to be a map from the $F^{\bullet}$ on the last subscript of $U_{i_0\ldots i_p}$ to the $E^{\bullet}$ on the first subscript of $U_{i_0\ldots i_p}$. Again, if $U_{i_0\ldots i_p}=\emptyset$,  let the component on it be zero.

\begin{remark}\label{remark: (p,q) Cech and cohomology degree}
In this paper when we talk about degree $(p,q)$, the first index always indicates the \v{C}ech degree while the second index always indicates the graded sheaf degree.
\end{remark}

We need to study the compositions of $C^{\bullet}(\mathcal{U},\text{Hom}^{\bullet}(E,F))$. Let $\{G^{\bullet}_{i_k}\}$ be a third   graded sheaf of $\mathcal{O}_X$-modules, then there is a composition map
$$
C^{\bullet}(\mathcal{U},\text{Hom}^{\bullet}(F,G)) \times C^{\bullet}(\mathcal{U},\text{Hom}^{\bullet}(E,F))\to C^{\bullet}(\mathcal{U},\text{Hom}^{\bullet}(E,G)).
$$
In fact, for $u^{p,q}\in C^p(\mathcal{U},\text{Hom}^{q}(F,G))$ and $v^{r,s} \in C^r(\mathcal{U},\text{Hom}^s(E,F))$, their composition $(u\cdot v)^{p+r,q+s}$ is given by (see \cite{o1981trace} Equation (1.1))
\begin{equation}\label{equation: composition of maps between graded sheaves}
(u\cdot v)^{p+r,q+s}_{i_0\ldots i_{p+r}}=(-1)^{qr}u^{p,q}_{i_0\ldots i_p}v^{r,s}_{i_p\ldots i_{p+r}}
\end{equation}
where the right hand side is the na\"{i}ve composition of sheaf maps.

In particular $C^{\bullet}(\mathcal{U},\text{Hom}^{\bullet}(E,E))$ becomes an associative algebra under this composition (It is easy but tedious to check the associativity). We also notice that $C^{\bullet}(\mathcal{U},E^{\bullet})$ becomes a left module over this algebra. In fact the action
$$
C^{\bullet}(\mathcal{U},\text{Hom}^{\bullet}(E,E))\times C^{\bullet}(\mathcal{U},E^{\bullet})\to  C^{\bullet}(\mathcal{U},E^{\bullet})
$$
is given by $(u^{p,q},c^{r,s})\mapsto (u\cdot c)^{p+r,q+s}$ where the action is given by (see \cite{o1981trace} Equation (1.2))
\begin{equation}\label{equation: action of maps on sheaves}
(u\cdot c)^{p+r,q+s}_{i_0\ldots i_{p+r}}=(-1)^{qr}u^{p,q}_{i_0\ldots i_p}c^{r,s}_{i_p\ldots i_{p+r}}
\end{equation}
where the right hand side is given by evaluation.

There is also  a \v{C}ech-style differential operator $\delta$ on $C^{\bullet}(\mathcal{U},\text{Hom}^{\bullet}(E,F))$ and $C^{\bullet}(\mathcal{U},E^{\bullet})$ of bidegree $(1,0)$ given by the formula
\begin{equation}\label{equation: delta on maps}
(\delta u)^{p+1,q}_{i_0\ldots i_{p+1}}=\sum_{k=1}^p(-1)^k u^{p,q}_{i_0\ldots \widehat{i_k} \ldots i_{p+1}}|_{U_{i_0\ldots i_{p+1}}} \,\text{ for } u^{p,q}\in C^p(\mathcal{U},\text{Hom}^q(E,F))
\end{equation}
and
\begin{equation}\label{equation: delta on sheaves}
(\delta c)^{p+1,q}_{i_0\ldots i_{p+1}}=\sum_{k=1}^{p+1}(-1)^k c^{p,q}_{i_0\ldots \widehat{i_k} \ldots i_{p+1}}|_{U_{i_0\ldots i_{p+1}}} \,\text{ for }c^{p,q}\in C^p(\mathcal{U},E).
\end{equation}

\begin{ctn}
Notice that the map $\delta$ defined above is different from the usual \u{C}ech differential. In Equation \eqref{equation: delta on maps} we do not include the $0$th and the $(p+1)$th indices and in Equation \eqref{equation: delta on sheaves} we do not include the $0$th  index.
\end{ctn}

\begin{proposition}\label{prop: Leibniz for Cech differential}
The  differential satisfies the Leibniz rule. More precisely we have
$$
\delta(u\cdot v)=(\delta u)\cdot v+(-1)^{|u|}u\cdot (\delta v)
$$
and
$$
\delta(u\cdot c)=(\delta u)\cdot c+(-1)^{|u|}u\cdot (\delta c)
$$where $|u|$ is  the total degree of $u$.
\end{proposition}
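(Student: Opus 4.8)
The plan is to prove both identities by one and the same direct computation: evaluate each side on an arbitrary nonempty intersection and match the resulting terms, the only real content being the sign bookkeeping. I would begin with the first identity. Write $u=u^{p,q}$ and $v=v^{r,s}$, so that $u\cdot v$ has \v{C}ech degree $p+r$ and the two sides must be compared on $U_{i_0\ldots i_{p+r+1}}$. The feature to exploit is that in
$$
(u\cdot v)^{p+r,q+s}_{j_0\ldots j_{p+r}}=(-1)^{qr}\,u^{p,q}_{j_0\ldots j_p}\,v^{r,s}_{j_p\ldots j_{p+r}}
$$
the index in position $p$ acts as a ``breakpoint'': everything at or to its left feeds the $u$-factor, everything at or to its right feeds the $v$-factor. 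Accordingly, in the sum $(\delta(u\cdot v))_{i_0\ldots i_{p+r+1}}=\sum_{k=1}^{p+r}(-1)^k(u\cdot v)_{i_0\ldots\widehat{i_k}\ldots i_{p+r+1}}$ I would split the range of $k$ according to where the deleted index sits relative to this breakpoint.

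Tracking positions after a deletion: when $1\le k\le p$ the breakpoint of the product moves to $i_{p+1}$, the $v$-factor is untouched, the deletion lands inside the $u$-factor, and — since the composition sign $(-1)^{qr}$ is unchanged by applying $\delta$ — these terms assemble immediately into $(-1)^{qr}(\delta u)_{i_0\ldots i_{p+1}}\,v_{i_{p+1}\ldots i_{p+r+1}}=((\delta u)\cdot v)_{i_0\ldots i_{p+r+1}}$. When $p+1\le k\le p+r$ the breakpoint stays at $i_p$, the $u$-factor is untouched, and the deletion lands inside the $v$-factor; here one must reindex $\delta v$ (producing a sign $(-1)^{p}$) and reconcile the composition sign $(-1)^{qr}$ carried by $u\cdot v$ with the sign $(-1)^{q(r+1)}$ carried by $u\cdot(\delta v)$, together with the prefactor $(-1)^{|u|}=(-1)^{p+q}$. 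The congruence $(-1)^{p+q}\,(-1)^{q(r+1)}\,(-1)^{p}=(-1)^{2p+2q+qr}=(-1)^{qr}$ shows that these terms assemble into $(-1)^{|u|}(u\cdot(\delta v))_{i_0\ldots i_{p+r+1}}$. A pleasant feature of the modified differential \eqref{equation: delta on maps}, which omits both the $0$th and the $(p+1)$th index, is that the two ranges $\{1,\ldots,p\}$ and $\{p+1,\ldots,p+r\}$ partition $\{1,\ldots,p+r\}$ exactly, so no boundary terms survive at the breakpoint and nothing has to be cancelled by hand.

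For the second identity I would run the same argument with $c=c^{r,s}\in C^r(\mathcal U,E)$ in place of $v$. Now $u\cdot c$ is a sheaf-cochain, so $\delta(u\cdot c)$ is governed by \eqref{equation: delta on sheaves}; the defining sum runs over $k=1,\ldots,p+r+1$ and splits as $\{1,\ldots,p\}\sqcup\{p+1,\ldots,p+r+1\}$, matching $(\delta u)\cdot c$ and $(-1)^{|u|}u\cdot(\delta c)$ respectively. The asymmetry between \eqref{equation: delta on maps} and \eqref{equation: delta on sheaves} — the latter retains the last index, the former drops it — is precisely what makes this second partition come out right, and the sign collapse is identical to the one above.

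I do not expect any conceptual obstacle here: the argument is entirely a bookkeeping exercise. The only thing that needs genuine care is keeping the two gradings apart, correctly following how deleting an index shifts the position of the breakpoint, and verifying that the several sources of signs (the alternating $(-1)^k$ from $\delta$, the two composition signs, the shift from reindexing, and the prefactor $(-1)^{|u|}$) really do collapse to the stated form in every range. That is routine but genuinely fiddly, and it is the part where an error would most easily creep in.
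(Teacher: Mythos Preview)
Your proposal is correct and is precisely the ``routine check'' that the paper alludes to without writing out: the paper's proof consists of the single sentence ``This is a routine check.'' Your splitting of the summation at the breakpoint index, together with the sign reconciliation $(-1)^{p+q}(-1)^{q(r+1)}(-1)^{p}=(-1)^{qr}$, is exactly the computation one has to do, and your observation about why the asymmetry between \eqref{equation: delta on maps} and \eqref{equation: delta on sheaves} makes the second partition come out right is on the mark.
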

\begin{proof}
This is a routine check.
\qed \end{proof}

\subsection{The definition of twisted complex}
Now we can define twisted complexes.

\begin{definition}\label{defi: twisted complex}[Twisted complexes]
Let $(X,\mathcal{O}_X)$ be a locally ringed, paracompact space and $\mathcal{U}=\{U_i\}$ be a locally finite open cover of $X$. A \emph{twisted complex} consists of a  graded sheaves $E^{\bullet}_{i}$ of  $\mathcal{O}_X$-modules on each $U_i$ together with a collection of morphisms
$$
a=\sum_{k \geq 0} a^{k,1-k}
$$
where $a^{k,1-k}\in C^k(\mathcal{U},\text{Hom}^{1-k}(E,E)) $ such that they satisfy the Maurer-Cartan equation
$$
\delta a+ a\cdot a=0.
$$
More explicitly, for $k\geq 0$
\begin{equation}\label{equation: MC for twisted complex explicit}
\delta a^{k-1,2-k}+ \sum_{i=0}^k a^{i,1-i}\cdot a^{k-i,1-k+i}=0.
\end{equation}

Moreover we impose the following non-degenerate condition: for each $i$, the chain map
$$
a^{1,0}_{ii}: (E^{\bullet}_i,  a^{0,1}_i)\to (E^{\bullet}_i,  a^{0,1}_i) \text { is chain homotopic to the identity map.}
$$

The twisted complexes on $(X,\mathcal{O}_X, \{U_i\})$ form a dg-category: the objects are the twisted complexes $\mathcal{E}=(E^{\bullet}_i,a)$ and the morphisms from $\mathcal{E}=(E^{\bullet}_i,a)$ to $\mathcal{F}=(F^{\bullet}_i,b)$ are $C^{\bullet}(\mathcal{U},\text{Hom}^{\bullet}(E,F))$. The degree of a morphism is given by the total degree of $C^{\bullet}(\mathcal{U},\text{Hom}^{\bullet}(E,F))$. Moreover, the differential of a morphism $\phi$ is given by
$$
d \phi=\delta \phi+b\cdot \phi-(-1)^{|\phi|}\phi\cdot a.
$$

We denote the dg-category of twisted complexes on $(X,\mathcal{O}_X, \{U_i\})$ by Tw$(X,\mathcal{O}_X, \{U_i\})$. If there is no danger of confusion we can simply denote it by Tw$(X)$.
\end{definition}

Actually the first few terms of the Maurer-Cartan Equation \eqref{equation: MC for twisted complex explicit} can be written as
\begin{equation*}
\begin{split}
a^{0,1}_i\cdot a^{0,1}_i&=0\\
a^{0,1}_i\cdot a^{1,0}_{ij}+a^{1,0}_{ij}\cdot a^{0,1}_j&=0\\
-a^{1,0}_{ik}+a^{1,0}_{ij}\cdot a^{1,0}_{jk}+a^{0,1}_i\cdot a^{2,-1}_{ijk}+a^{2,-1}_{ijk}\cdot a^{0,1}_k&=0\\
\ldots
\end{split}
\end{equation*}
Let us explain the  meaning of these equations. The first equation tells us that for each $i$, $(E^{\bullet}_i,a^{0,1}_i)$ is a chain complex. The second equation, together with the sign convention in Equation \eqref{equation: composition of maps between graded sheaves}, tells us that $a^{1,0}_{ij}$ gives a chain map $(E^{\bullet}_j,a^{0,1}_j)\to (E^{\bullet}_i,a^{0,1}_i)$. The third equation says that we have the cocycle condition
$$
a^{1,0}_{ik}=a^{1,0}_{ij}  a^{1,0}_{jk}
$$
\emph{up to homotopy} with the homotopy operator $a^{2,-1}_{ijk}$.

\begin{ctn}\label{ctn: twisted complex is not a sheaf}
Notice that a  twisted complex itself is not a complex of sheaves on $X$.
\end{ctn}

For our purpose we need the following smaller dg-categories.
\begin{definition}\label{defi: twisted perfect complex}
A \emph{twisted perfect complex} $\mathcal{E}=(E^{\bullet}_i,a)$ is the same as twisted complex except that each $E^{\bullet}_i$ is required to be a strictly perfect complex on $U_i$.

The twisted perfect complexes form a dg-category and we denote it by $\text{Tw}_{\text{perf}}(X,\mathcal{O}_X, \{U_i\})$ or simply $\text{Tw}_{\text{perf}}(X)$. Obviously $\text{Tw}_{\text{perf}}(X)$ is a full dg-subcategory of Tw$(X)$.
\end{definition}

\begin{remark}\label{remark: aii is not id}
The twisted perfect complex in this paper is almost the same as the \emph{twisted cochain} in \cite{o1981trace}. The only difference between our definition and theirs is that we do not require that for any $i$
$$
a^{1,0}_{ii}=id_{E^{\bullet}_i} \text{ on the nose}.
$$
Our definition guarantees that the mapping cone exists in the category Tw$(X)$, see Definition \ref{defi: mapping cone} below.
\end{remark}

\begin{remark}\label{remark: relation to other topics}
We would like to mention some related topics here.
\begin{itemize}
\item Our construction is very similar to the twisted complex in \cite{bondal1990enhanced}. For example both constructions involve the Maurer-Cartan equation. The main difference is that  the differential of the Maurer-Cartan equation in Bondal and Kapranov's twisted complex is  the differential in the dg-category, while our differential is the \v{C}ech differential $\delta$.

\item The construction of twisted complexes is very similar to the \emph{dg-nerve} as in \cite{luriehigher} 1.3.1.6 or Definition 2.3 in \cite{block2014higher}. It is worthwhile to find the deeper relations.

\item We expect the dg-category $\text{Tw}_{\text{perf}}(X)$ gives an explicit realization of the  \emph{homotopy limit} of $L(U_i)$, the dg-categories of locally free finitely generated sheaves on $U_i$. This problem has been solved in the recent preprint \cite{block2015explicit} and  the construction depends heavily on the simplicial resolution of dg-categories in \cite{holstein2014properness}.
\end{itemize}\end{remark}

\begin{definition}\label{defi: differential deltaa}
For a  fixed twisted complex $(E^{\bullet},a)$, we can define an operator $\delta_a$ on $C^{\bullet}(\mathcal{U},E^{\bullet})$ of total degree $1$ by
$$
\delta_a c=\delta c+ a\cdot c.
$$

The Maurer-Cartan equation $\delta a+a\cdot a=0$ implies that $\delta_a^2=0$, i.e. $\delta_a$ is a differential on  $C^{\bullet}(\mathcal{U},E^{\bullet})$.

We have the same construction when we restrict to $\text{Tw}_{\text{perf}}(X)$.
\end{definition}

\begin{remark}
We see that the differential $\delta_a$ in Definition \ref{defi: differential deltaa} is a twist of the differential $\delta$. This justifies the name "twisted complex".
\end{remark}

\subsection{Further study of the non-degeneracy condition of twisted complexes}
 Recall that for each $i$, the $(0,1)$ component $a^{0,1}_{i}: E^n_{i}\to E^{n+1}_{i}$ is a differential of $\mathcal{O}_X$-modules on $U_{i}$, hence we get a complex $( E^n_i, a^{0,1}_i)$ on $U_i$. Remember that the map is the dot multiplication of $a^{0,1}_{i}$ as in Equation \eqref{equation: action of maps on sheaves}.

 Now we consider the map $a^{1,0}_{ii}: E^n_i\to E^n_i$, the Maurer-Cartan equation \eqref{equation: MC for twisted complex explicit} in the $k=1$ case tells us
 $$
 a^{1,0}_{ii}\cdot a^{0,1}_i+ a^{0,1}_i\cdot a^{1,0}_{ii}=0.
 $$
Actually under the sign convention in Equation \eqref{equation: composition of maps between graded sheaves}, the above equation becomes
$$
 a^{1,0}_{ii}  a^{0,1}_i- a^{0,1}_i  a^{1,0}_{ii}=0.
$$
In other words, $ a^{1,0}_{ii}$ gives a chain map $( E^n_i, a^{0,1}_i)\to (E^n_i, a^{0,1}_i)$.

Let us denote the homotopy category of complexes of $\mathcal{O}_X$-modules on $U_i$ by $K(U_i)$. Then we have the following lemma.

\begin{lemma}\label{lemma: aii is an idempotent}
If the $a^{k,1-k}$'s satisfy the Maurer-Cartan equation, then $ a^{1,0}_{ii}: ( E^n_i, a^{0,1}_i)\to (E^n_i, a^{0,1}_i)$ is an idempotent map in the homotopy category $K(U_i)$, i.e. $(a^{1,0}_{ii})^2=a^{1,0}_{ii}$ up to chain homotopy.
\end{lemma}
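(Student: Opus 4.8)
The plan is to read off the required homotopy directly from a single instance of the Maurer--Cartan equation \eqref{equation: MC for twisted complex explicit}, with no real computation beyond a sign check.

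First I would take the $k=2$ case of \eqref{equation: MC for twisted complex explicit}, i.e.
$$\delta a^{1,0}+a^{0,1}\cdot a^{2,-1}+a^{1,0}\cdot a^{1,0}+a^{2,-1}\cdot a^{0,1}=0,$$
and evaluate it on a triple intersection $U_{ijk}$. Using the composition sign rule \eqref{equation: composition of maps between graded sheaves} (for the bidegrees $(1,0)$, $(0,1)$, $(2,-1)$ occurring here every composition sign is $+1$) and the formula \eqref{equation: delta on maps} for $\delta$ (which gives $(\delta a^{1,0})^{2,0}_{i_0i_1i_2}=-a^{1,0}_{i_0i_2}$), this becomes precisely the third line of the unrolled Maurer--Cartan list,
$$-a^{1,0}_{ik}+a^{0,1}_i a^{2,-1}_{ijk}+a^{1,0}_{ij} a^{1,0}_{jk}+a^{2,-1}_{ijk} a^{0,1}_k=0.$$

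Next I would specialize all three indices to a single index $i$, which is legitimate since $U_{iii}=U_i$. This yields, as an identity of $\mathcal{O}_X$-module maps over $U_i$,
$$(a^{1,0}_{ii})^2-a^{1,0}_{ii}=-\bigl(a^{0,1}_i a^{2,-1}_{iii}+a^{2,-1}_{iii} a^{0,1}_i\bigr).$$
By the conventions of Section \ref{section: review of twisted complex}, $a^{2,-1}_{iii}$ is a degree $-1$ endomorphism of the graded sheaf $E^{\bullet}_i$ on $U_i$, and $a^{0,1}_i$ is exactly the differential of the complex $(E^{\bullet}_i,a^{0,1}_i)$. Hence the right-hand side is the standard null-homotopy expression $-(d h+h d)$ with $d=a^{0,1}_i$ and $h=a^{2,-1}_{iii}$.

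Therefore $(a^{1,0}_{ii})^2$ and $a^{1,0}_{ii}$ are chain homotopic via $-a^{2,-1}_{iii}$, so they coincide in $K(U_i)$; since $a^{1,0}_{ii}$ is already a chain map $(E^{\bullet}_i,a^{0,1}_i)\to(E^{\bullet}_i,a^{0,1}_i)$ (as recorded just before the lemma), this exhibits $a^{1,0}_{ii}$ as an idempotent of $K(U_i)$. I do not expect a genuine obstacle here: the only things to check carefully are that all the signs produced by \eqref{equation: composition of maps between graded sheaves} and \eqref{equation: delta on maps} are indeed $+1$ for these bidegrees, and that the homotopy $a^{2,-1}_{iii}$ is supported on $U_i$ itself, so that the statement is genuinely internal to $K(U_i)$ as claimed.
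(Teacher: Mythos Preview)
Your proposal is correct and follows exactly the same approach as the paper's own proof: specialize the $k=2$ case of the Maurer--Cartan equation to the triple index $iii$ and read off $a^{2,-1}_{iii}$ (up to the sign you noted) as the homotopy witnessing $(a^{1,0}_{ii})^2\simeq a^{1,0}_{ii}$. Your write-up is in fact slightly more careful than the paper's, since you verify explicitly that the composition signs from \eqref{equation: composition of maps between graded sheaves} are all $+1$ at these bidegrees.
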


\begin{proof}
The $k=2$ case of the Maurer-Cartan equation \eqref{equation: MC for twisted complex explicit} gives us
$$
-a^{1,0}_{ii}+a^{1,0}_{ii}\cdot a^{1,0}_{ii}+a^{0,1}_i\cdot a^{2,-1}_{iii}+a^{2,-1}_{iii}\cdot a^{0,1}_i=0.
$$
We take $a^{2,-1}_{iii}$ to be the homotopy operator and this immediately gives what we want.
\qed \end{proof}

For later purpose we need the following lemma.
\begin{lemma}\label{lemma: homotopy to id is the same as homotopy invertible}
If the $a^{k,1-k}$'s satisfy the Maurer-Cartan equation, then $ a^{1,0}_{ii}: ( E^n_i, a^{0,1}_i)\to (E^n_i, a^{0,1}_i)$ is homotopic to the identity map  if and only if it is homotopic invertible.
\end{lemma}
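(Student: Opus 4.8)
The plan is to work entirely in the homotopy category $K(U_i)$, where the statement "$a^{1,0}_{ii}$ is homotopic to the identity" becomes "$[a^{1,0}_{ii}] = \mathrm{id}$" and "homotopy invertible" becomes "invertible". Before doing anything one should recall, as already observed just before Lemma \ref{lemma: aii is an idempotent}, that under the sign convention of Equation \eqref{equation: composition of maps between graded sheaves} the $k=1$ Maurer-Cartan relation says exactly that $a^{1,0}_{ii}$ commutes with $a^{0,1}_i$, i.e. it is a genuine chain map $(E^n_i,a^{0,1}_i)\to(E^n_i,a^{0,1}_i)$, so it does define a class in $K(U_i)$.

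The forward implication is immediate: the identity map is homotopy invertible (it is its own homotopy inverse), hence any chain map homotopic to it is homotopy invertible. For the converse I would invoke Lemma \ref{lemma: aii is an idempotent}, which tells us that $e := [a^{1,0}_{ii}]$ satisfies $e^2 = e$ in $K(U_i)$. Then I use the elementary categorical fact that in any category an idempotent which is also an isomorphism is forced to be the identity: if $e^2 = e$ and $f$ is a two-sided inverse of $e$, then
$$
e = e\,\mathrm{id} = e(ef) = e^2 f = ef = \mathrm{id}.
$$
Applying this with the hypothesis that $e$ is invertible in $K(U_i)$ yields $[a^{1,0}_{ii}] = \mathrm{id}$ in $K(U_i)$, which is precisely the assertion that $a^{1,0}_{ii}$ is chain homotopic to the identity map.

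The main (and essentially only) point requiring care is the bookkeeping around what "homotopy invertible" means: one must read it as "admits a two-sided homotopy inverse in $K(U_i)$", so that the cancellation step above is legitimate. Beyond that, no computation with the $a^{k,1-k}$ is needed — everything rests on Lemma \ref{lemma: aii is an idempotent} together with this purely formal observation about invertible idempotents, so I do not expect any genuine obstacle here.
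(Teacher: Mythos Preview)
Your proof is correct and follows the same approach as the paper: the paper's proof simply cites Lemma~\ref{lemma: aii is an idempotent} to get $(a^{1,0}_{ii})^2 = a^{1,0}_{ii}$ up to homotopy and then declares the result ``obvious,'' whereas you have spelled out the elementary categorical reason why an invertible idempotent must be the identity. Nothing is missing or different in substance.
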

\begin{proof}
By Lemma \ref{lemma: aii is an idempotent}, we know that $(a^{1,0}_{ii})^2=a^{1,0}_{ii}$  up to chain homotopy. Then the result is obvious.
\qed \end{proof}

We will discuss the non-degeneracy condition further in Section \ref{subsection: splitting}.

\subsection{The pre-triangulated structure on Tw$(X)$}\label{subsection: pre-triangulated structure}
The dg-category Tw$(X)$ has a natural shift-by-one functor and a mapping cone construction as follows.

\begin{definition}\label{defi: shift of twisted complex}[Shift]
Let $\mathcal{E}=(E^{\bullet}_i,a)$ be a twisted complex. We define its shift   $\mathcal{E}[1]$ to be $\mathcal{E}[1]=(E[1]^{\bullet}_i,a[1])$ where
$$
E[1]^{\bullet}_i=E^{\bullet+1}_i \text{ and } a[1]^{k,1-k}=(-1)^{k-1}a^{k,1-k}.
$$

Moreover, let $\phi: \mathcal{E}\to \mathcal{F}$ be a morphism. We define its shift $\phi[1]$ as
$$
\phi[1]^{p,q}=(-1)^q\phi^{p,q}.
$$
\end{definition}

\begin{definition}\label{defi: mapping cone}[Mapping cone]
Let $\phi^{\bullet,-\bullet}$ be a closed degree zero map between twisted complexes $\mathcal{E}=(E^{\bullet},a^{\bullet,1-\bullet})$ and $\mathcal{F}=(F^{\bullet},b^{\bullet,1-\bullet})$ , we can define the \emph{mapping cone} $\mathcal{G}=(G,c)$ of $\phi$ as follows (see \cite{o1985grothendieck} Section 1.1):
$$
G^n_i:=E^{n+1}_i\oplus F^n_i
$$
and
\begin{equation}\label{equation: diff in mapping cone}
c^{k,1-k}_{i_0\ldots i_k}=\begin{pmatrix}(-1)^{k-1} a^{k,1-k}_{i_0\ldots i_k}&0\\ (-1)^k\phi^{k,-k}_{i_0\ldots i_k}&b^{k,1-k}_{i_0\ldots i_k}\end{pmatrix}.
\end{equation}
\end{definition}

\begin{remark}\label{remark: mapping cone cannot have id transition functions}
As a special case of Equation \eqref{equation: diff in mapping cone} we get
$$
c^{1,0}_{ii}=\begin{pmatrix}a^{1,0}_{ii}&0\\ -\phi^{1,-1}_{ii}&b^{1,0}_{ii}\end{pmatrix}.
$$
It is clear that $c^{1,0}_{ii}\neq id$ even if both  $a^{1,0}_{ii}$ and $b^{1,0}_{ii}$ equal to $id$ since we cannot assume that $\phi^{1,-1}_{ii}=0$ for any $i$. This is the main technical reason that we drop the requirement $a^{1,0}_{ii}=id$ in the definition of twisted complex, see Remark \ref{remark: aii is not id}  after Definition \ref{defi: twisted complex}.
\end{remark}

Nevertheless, we can prove that the mapping cone satisfies the non-degeneracy condition in Definition \ref{defi: perfect complex}.

\begin{lemma}\label{lemma: nondegenerate of mapping cone}
Let $\phi^{\bullet,-\bullet}$ be a closed degree zero map between twisted complexes $\mathcal{E}=(E^{\bullet},a^{\bullet,1-\bullet})$ and $\mathcal{F}=(F^{\bullet},b^{\bullet,1-\bullet})$. Let $\mathcal{G}=(G,c)$ be the mapping cone of $\phi$. Then
$$
c^{1,0}_{ii}: (G^{\bullet}_i,c^{0,1}_i)\to (G^{\bullet}_i,c^{0,1}_i)
$$
is chain homotopic to id.
\end{lemma}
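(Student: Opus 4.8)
The plan is to identify $(G^\bullet_i,c^{0,1}_i)$ with an ordinary mapping cone and then to settle the statement inside the triangulated category $K(U_i)$. Putting $k=0$ in Equation~\eqref{equation: diff in mapping cone}, the differential on $G^\bullet_i=E^{\bullet+1}_i\oplus F^\bullet_i$ is $c^{0,1}_i=\left(\begin{smallmatrix}-a^{0,1}_i&0\\ \phi^{0,0}_i&b^{0,1}_i\end{smallmatrix}\right)$, which is precisely the differential of the usual mapping cone of the chain map $\phi^{0,0}_i\colon(E^\bullet_i,a^{0,1}_i)\to(F^\bullet_i,b^{0,1}_i)$ on $U_i$. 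Hence in $K(U_i)$ there is a distinguished triangle $(E^\bullet_i,a^{0,1}_i)\xrightarrow{\phi^{0,0}_i}(F^\bullet_i,b^{0,1}_i)\to(G^\bullet_i,c^{0,1}_i)\to(E^\bullet_i,a^{0,1}_i)[1]$, the middle arrow being the inclusion of $F^\bullet_i$ and the last arrow the projection onto $E^{\bullet+1}_i$. Since the collection $c$ satisfies the Maurer--Cartan equation (see \cite{o1985grothendieck} Section~1.1), Lemma~\ref{lemma: homotopy to id is the same as homotopy invertible} applies to $\mathcal G$, so it suffices to show that $c^{1,0}_{ii}$ is homotopy invertible.

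Next I would verify that $(a^{1,0}_{ii},b^{1,0}_{ii},c^{1,0}_{ii})$ is an endomorphism of this triangle in $K(U_i)$. The two squares involving the inclusion of $F^\bullet_i$ and the projection onto $E^{\bullet+1}_i$ commute on the nose, directly from the block form $c^{1,0}_{ii}=\left(\begin{smallmatrix}a^{1,0}_{ii}&0\\ -\phi^{1,-1}_{ii}&b^{1,0}_{ii}\end{smallmatrix}\right)$ recorded in Remark~\ref{remark: mapping cone cannot have id transition functions}; indeed $c^{1,0}_{ii}$ is exactly the map of cones induced by the pair $(a^{1,0}_{ii},b^{1,0}_{ii})$ together with the homotopy $\phi^{1,-1}_{ii}$. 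The remaining square, the compatibility of $a^{1,0}_{ii}$, $b^{1,0}_{ii}$ and $\phi^{0,0}_i$, is the bidegree $(1,0)$ component of the identity $d\phi=\delta\phi+b\cdot\phi-\phi\cdot a=0$ (recall $\phi$ is closed of degree $0$) evaluated at the index pair $(i,i)$: using the sign rule of Equation~\eqref{equation: composition of maps between graded sheaves} and the fact that the operator $\delta$ of Equation~\eqref{equation: delta on maps} vanishes on \v{C}ech degree $0$, this reads $b^{1,0}_{ii}\phi^{0,0}_i-\phi^{0,0}_i a^{1,0}_{ii}=b^{0,1}_i\phi^{1,-1}_{ii}+\phi^{1,-1}_{ii}a^{0,1}_i$, and the right-hand side is precisely the differential of $\phi^{1,-1}_{ii}$ regarded as a degree $-1$ element of $\text{Hom}^\bullet(E^\bullet_i,F^\bullet_i)$ on $U_i$. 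Thus the left square commutes up to the chain homotopy $\phi^{1,-1}_{ii}$, and we obtain a genuine morphism of distinguished triangles.

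To conclude, the non-degeneracy conditions for $\mathcal E$ and $\mathcal F$ say that $a^{1,0}_{ii}$ and $b^{1,0}_{ii}$ are chain homotopic to the respective identity maps, hence are isomorphisms in $K(U_i)$; the five lemma for morphisms of distinguished triangles then forces $c^{1,0}_{ii}$ to be an isomorphism in $K(U_i)$, i.e. homotopy invertible, and combined with the reduction of the first paragraph this yields that $c^{1,0}_{ii}$ is chain homotopic to $\mathrm{id}_{G^\bullet_i}$. I expect the only delicate point to be the sign bookkeeping when deriving $b^{1,0}_{ii}\phi^{0,0}_i-\phi^{0,0}_i a^{1,0}_{ii}=b^{0,1}_i\phi^{1,-1}_{ii}+\phi^{1,-1}_{ii}a^{0,1}_i$ from $d\phi=0$, in particular tracking the signs coming from Equation~\eqref{equation: composition of maps between graded sheaves} so that the right-hand side is recognizable as the differential of $\phi^{1,-1}_{ii}$; I also note that on a general ringed space one really must argue inside $K(U_i)$, since a quasi-isomorphism of complexes on $U_i$ need not be a homotopy equivalence, which is why the argument goes through the triangulated five lemma rather than a long exact cohomology sequence.
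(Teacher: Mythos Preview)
Your proof is correct. The paper's own argument is considerably shorter: it simply observes that $c^{1,0}_{ii}=\left(\begin{smallmatrix}a^{1,0}_{ii}&0\\ -\phi^{1,-1}_{ii}&b^{1,0}_{ii}\end{smallmatrix}\right)$ is lower block triangular with homotopy-invertible diagonal blocks, hence homotopy invertible, and then applies Lemma~\ref{lemma: homotopy to id is the same as homotopy invertible} exactly as you do. Your distinguished-triangle/five-lemma argument is really the justification underlying that one-line claim: the lower-triangular shape says precisely that $c^{1,0}_{ii}$ preserves the degreewise split filtration $F^\bullet_i\subset G^\bullet_i$, inducing $b^{1,0}_{ii}$ on the sub and $a^{1,0}_{ii}$ on the quotient, which is your morphism of triangles. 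So the two proofs coincide in substance; yours makes explicit the triangulated-category reasoning (and the verification that the square with $\phi^{0,0}_i$ commutes up to the homotopy $\phi^{1,-1}_{ii}$), while the paper leaves this implicit in the phrase ``lower block triangular''.
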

\begin{proof}
By Lemma \ref{lemma: homotopy to id is the same as homotopy invertible}, we know that $a^{1,0}_{ii}$ and $b^{1,0}_{ii}$ are homotopic invertible, hence
$$
c^{1,0}_{ii}=\begin{pmatrix}a^{1,0}_{ii}&0\\ -\phi^{1,-1}_{ii}&b^{1,0}_{ii}\end{pmatrix}
$$
is also homotopic invertible since it is a lower block triangular matrix.

On the other hand the $c^{k,1-k}$'s satisfy the Maurer-Cartan equation. Again by Lemma  \ref{lemma: homotopy to id is the same as homotopy invertible} we know that $c^{1,0}_{ii}$ is chain homotopic to id.
\qed \end{proof}

\begin{proposition}\label{prop:pre-triangulated}
Tw$(X)$ is a pre-triangulated dg-category and Tw$(X)$ is a pre-triangulated dg-subcategory of Tw$(X)$. Therefore the category HoTw$(X)$ is triangulated.

The same result holds for $\text{Tw}_{\text{perf}}(X)$.
\end{proposition}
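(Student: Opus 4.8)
The plan is to invoke the standard criterion of Bondal--Kapranov \cite{bondal1990enhanced}: a dg-category is pre-triangulated once it is closed, up to homotopy equivalence, under the shift functor and under the formation of mapping cones of closed degree-zero morphisms, and in that case its homotopy category is automatically triangulated, the distinguished triangles being those isomorphic to the standard ones $\mathcal{E}\xrightarrow{\phi}\mathcal{F}\to\mathcal{G}\to\mathcal{E}[1]$ attached to a closed degree-zero map $\phi$ via its mapping cone $\mathcal{G}$. Definitions \ref{defi: shift of twisted complex} and \ref{defi: mapping cone} already supply the candidate objects inside Tw$(X)$, so the proof reduces to checking: (a) these candidates genuinely lie in Tw$(X)$; (b) $[1]$ is a dg-autoequivalence; (c) the evident maps into and out of the cone are closed of degree zero and the cone is functorial up to homotopy; (d) the classical octahedral-type axioms.

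First I would verify that $\mathcal{E}[1]=(E[1]^\bullet_i,a[1])$ is an object of Tw$(X)$: the only points are the Maurer--Cartan equation for $a[1]^{k,1-k}=(-1)^{k-1}a^{k,1-k}$ and the non-degeneracy condition. For the former one substitutes the signs into \eqref{equation: MC for twisted complex explicit} and into the composition rule \eqref{equation: composition of maps between graded sheaves} and checks that the extra signs cancel term by term --- this is precisely what the sign $(-1)^{k-1}$ is engineered to do. The non-degeneracy condition is immediate since $a[1]^{1,0}_{ii}=a^{1,0}_{ii}$ and a null-homotopy on $(E^\bullet_i,a^{0,1}_i)$ transports, with a sign, to one on the shifted complex. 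One also checks that $\phi\mapsto\phi[1]$, $\phi[1]^{p,q}=(-1)^q\phi^{p,q}$, is compatible with $\delta$, with composition, and with the twisted differential $d\phi=\delta\phi+b\cdot\phi-(-1)^{|\phi|}\phi\cdot a$, so $[1]$ is a dg-functor, and that $[-1]$ is a strict inverse; hence $[1]$ induces an autoequivalence of HoTw$(X)$.

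Next I would check that the mapping cone $\mathcal{G}=(G,c)$ of Definition \ref{defi: mapping cone} is an object of Tw$(X)$. Non-degeneracy of $c^{1,0}_{ii}$ is exactly Lemma \ref{lemma: nondegenerate of mapping cone}. That the $c^{k,1-k}$ of \eqref{equation: diff in mapping cone} satisfy $\delta c+c\cdot c=0$ is a block computation: the diagonal blocks reproduce the Maurer--Cartan equations of $\mathcal{E}[1]$ and of $\mathcal{F}$, the upper-right block is $0$, and the lower-left block collapses, after collecting the signs $(-1)^{k-1}$ and $(-1)^k$, to the statement that $\phi$ is a closed morphism of total degree zero, i.e. $\delta\phi+b\cdot\phi-\phi\cdot a=0$. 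I would then record the evident closed degree-zero morphisms $\iota\colon\mathcal{F}\to\mathcal{G}$ (the inclusion $F^n_i\hookrightarrow E^{n+1}_i\oplus F^n_i$) and $\pi\colon\mathcal{G}\to\mathcal{E}[1]$ (the projection), check that $\iota\circ\phi$ is null-homotopic, that homotopic $\phi$'s yield homotopy-equivalent cones, and that $(\phi,\iota,\pi)$ is the standard triangle. Granting shifts and cones, the remaining verifications that HoTw$(X)$ is triangulated --- rotation of triangles, completion of any closed degree-zero map to a triangle, and the octahedral axiom --- go through exactly as in the classical category of complexes (or may be quoted from \cite{bondal1990enhanced}); I would only indicate the dictionary rather than redo those diagrams.

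For $\text{Tw}_{\text{perf}}(X)$ one observes that on each $U_i$ the strictly perfect complexes are stable under the shift $E^\bullet_i\mapsto E^{\bullet+1}_i$ and under the direct sum $E^{\bullet+1}_i\oplus F^\bullet_i$ occurring in $G^n_i$; therefore $\text{Tw}_{\text{perf}}(X)$ is closed in Tw$(X)$ under shifts and cones, hence is itself pre-triangulated, and the full embedding into Tw$(X)$ respects the triangulated structures, so Ho$\text{Tw}_{\text{perf}}(X)$ is triangulated. The one genuinely delicate point I expect is the sign bookkeeping in the Maurer--Cartan equation for $c$, together with establishing functoriality of the cone up to homotopy; this is what makes the formal package actually apply, and everything else is either immediate or a transcription of the classical arguments.
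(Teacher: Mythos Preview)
Your proposal is correct and follows exactly the route the paper has in mind: the paper's own proof is the single sentence ``It is easy to check this result,'' relying on Definitions \ref{defi: shift of twisted complex} and \ref{defi: mapping cone} and on Lemma \ref{lemma: nondegenerate of mapping cone} for the non-degeneracy of the cone. You have simply spelled out the routine verifications that the paper leaves implicit, and your identification of the sign bookkeeping in $\delta c + c\cdot c = 0$ as the only mildly delicate point is apt.
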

\begin{proof}
It is easy to check this result.
\qed \end{proof}

\begin{ctn}\label{ctn: sign different in mapping cone of Toledo Tong}
The degree and sign convention in the definition of mapping cones in this paper are slightly different to those in \cite{o1985grothendieck} Section 1.1.
\end{ctn}

\subsection{Weak equivalences in Tw$(X)$}\label{subsection: weak equivalence}

In this subsection we specify the class of weak equivalences in Tw$(X)$, which is very important in our later constructions.

\begin{definition}\label{defi: quasi-isomorphism in Tw}[Weak equivalence]
Let $\mathcal{E}=(E^{\bullet},a^{\bullet,1-\bullet})$ and $\mathcal{F}=(F^{\bullet},b^{\bullet,1-\bullet})$ be two objects in Tw$(X)$. A  morphism $\phi: \mathcal{E} \to \mathcal{F}$ is called a \emph{weak equivalence} if it satisfies the following two conditions.
\begin{enumerate}
\item $\phi$ is closed and of degree zero;
\item its $(0,0)$ component
$$
\phi^{0,0}_i: (E^{\bullet}_i,a^{0,1}_i)\to (F^{\bullet}_i,b^{0,1}_i)
$$
is a quasi-isomorphism of complexes of $\mathcal{O}_X$-modules on $U_i$ for each $i$.
\end{enumerate}
\end{definition}

\begin{remark}
The definition of weak equivalence between twisted complexes is first introduced in \cite{gillet1986k}.
\end{remark}

If $\mathcal{E}$ and $\mathcal{F}$ are both in the subcategory $\text{Tw}_{\text{perf}}(X)$ we have a further result on weak equivalence between them. For this we need some assumption on the open cover $\{U_i\}$ and some technical lemmas, which we introduce here.

\begin{lemma}\label{lemma: acyclic of complex on good covers}
Let $U$ be a subset of $X$ which satisfies $H^k(U,\mathcal{F})=0$ for any quasi-coherent sheaf $\mathcal{F}$ on $U$ and any $k\geq 1$. Let $E^{\bullet}$ be a bounded above complex of finitely generated locally free sheaves on $U$ and $G^{\bullet}$ be an acyclic complex of quasi-coherent modules on $U$, then the Hom complex
$\text{Hom}^{\bullet}(E,G)$ is acyclic.
\end{lemma}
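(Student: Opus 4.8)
The plan is to realise $\text{Hom}^{\bullet}(E,G)$ as the \emph{product} total complex of a double complex and to establish acyclicity one column at a time, passing to global sections \emph{before} totalising so that the hypothesis on $U$ is only ever applied to honestly quasi-coherent sheaves. First I would set up notation: let $\mathcal{K}^{p,q}=\mathcal{H}om_{\mathcal{O}_U}(E^{-p},G^{q})$ be the double complex of $\mathcal{O}_U$-modules whose horizontal differential is induced (with the usual sign) by $d_E$ and whose vertical differential is induced by $d_G$, and put $L^{p,q}=\Gamma(U,\mathcal{K}^{p,q})=\text{Hom}_{\mathcal{O}_U}(E^{-p},G^{q})$. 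Then $\text{Hom}^{\bullet}(E,G)$ is the product total complex $\text{Tot}^{\prod}(L)$, so $\text{Hom}^{n}(E,G)=\prod_{m}\text{Hom}_{\mathcal{O}_U}(E^{m},G^{m+n})$. Since $E^{\bullet}$ is bounded above there is an $N$ with $E^{m}=0$ for $m>N$, hence $L^{p,q}=0$ for $p<-N$: the double complex is concentrated in the half-plane $p\ge -N$.

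Next, fix $m$ and examine the column $p=-m$. As $G^{\bullet}$ is acyclic it splices into short exact sequences $0\to B^{q}\to G^{q}\to B^{q+1}\to 0$ with $B^{q}=\ker(d_G\colon G^{q}\to G^{q+1})$ quasi-coherent. Applying $\mathcal{H}om_{\mathcal{O}_U}(E^{m},-)\cong (E^{m})^{\vee}\otimes_{\mathcal{O}_U}(-)$, which is exact because $E^{m}$ (hence $(E^{m})^{\vee}$) is finitely generated locally free and therefore flat, gives short exact sequences $0\to (E^{m})^{\vee}\otimes B^{q}\to \mathcal{H}om_{\mathcal{O}_U}(E^{m},G^{q})\to (E^{m})^{\vee}\otimes B^{q+1}\to 0$ of quasi-coherent sheaves on $U$. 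By hypothesis $H^{1}(U,(E^{m})^{\vee}\otimes B^{q})=0$, so the induced sequences of global sections remain exact; splicing them back shows that the column $L^{-m,\bullet}=\text{Hom}_{\mathcal{O}_U}(E^{m},G^{\bullet})$ is an acyclic complex of abelian groups.

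It then remains to totalise, using the elementary fact that the product total complex of a double complex of abelian groups which is concentrated in a half-plane $p\ge p_{0}$ and all of whose columns are acyclic is itself acyclic. Concretely, given a total cocycle $x=(x^{p})_{p\ge -N}$ of degree $n$, I would build a primitive $y=(y^{p})_{p\ge -N}$ of degree $n-1$ by the staircase method: use acyclicity of the column $p=-N$ to solve $d_{v}y^{-N}=x^{-N}$, and then inductively solve $d_{v}y^{p}=x^{p}\mp d_{h}y^{p-1}$, the right-hand side being a vertical cocycle by the inductive hypothesis together with $d_{\text{tot}}x=0$, using acyclicity of the column $p$. Since $\text{Hom}^{n-1}(E,G)$ is a \emph{product} over $p$, the family $y=(y^{p})_{p\ge -N}$ genuinely defines an element of it with $d_{\text{tot}}y=x$. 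Hence $\text{Hom}^{\bullet}(E,G)$ is acyclic.

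The point that requires care is precisely the order of operations forced by the hypothesis. The sheafified Hom complex $\mathcal{H}om^{\bullet}(E,G)$ has terms $\prod_{m}\mathcal{H}om_{\mathcal{O}_U}(E^{m},G^{m+\bullet})$, which are infinite products of quasi-coherent sheaves and hence not, in general, quasi-coherent, so the vanishing hypothesis cannot be applied directly to the whole Hom complex; taking global sections column by column — where each $\mathcal{H}om_{\mathcal{O}_U}(E^{m},G^{q})\cong (E^{m})^{\vee}\otimes G^{q}$ really is quasi-coherent — and only afterwards totalising is what makes the argument work, and the boundedness above of $E^{\bullet}$ is exactly what gives the staircase recursion a place to start. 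Keeping track of the standard Hom-complex signs when identifying $\text{Hom}^{\bullet}(E,G)$ with $\text{Tot}^{\prod}(L)$ is routine and I would not dwell on it.
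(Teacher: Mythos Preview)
Your proof is correct and follows essentially the same route as the paper's: filter $\text{Hom}^{\bullet}(E,G)$ by the $E$-degree, show each graded piece $\text{Hom}(E^{m},G^{\bullet})$ is acyclic using that $(E^{m})^{\vee}\otimes(-)$ is exact and $H^{1}(U,-)=0$ on quasi-coherent sheaves, and then conclude via the half-plane staircase (which the paper phrases as ``a simple spectral sequence argument''). Your version is more explicit than the paper's about the product total complex and about taking global sections columnwise before totalising, but this is a clarification of the same argument rather than a different approach.
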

\begin{proof}
We have a filtration on $\text{Hom}^{\bullet}(E,G)$ given by the $E^{\bullet}$ degree. More explicitly let
$$
F^k \text{Hom}^{\bullet}(E,G)=\{\phi\in\text{Hom}^{\bullet}(E,G)|\phi(e)=0 \text{ if deg}(e)>-k  \}.
$$

By a simple spectral sequence argument, it is sufficient to prove that
$$(F^k\text{Hom}^{\bullet}(E,G)/F^{k+1}\text{Hom}^{\bullet}(E,G),d_{\text{Hom}})$$ is acyclic for each $k$. We notice that
$$
(F^k\text{Hom}^{\bullet}(E,G)/F^{k+1}\text{Hom}^{\bullet}(E,G),d_{\text{Hom}})\cong (\text{Hom}(E^k,G^{\bullet}), d_G).
$$

We know that $(G^{\bullet},d_G)$ is acyclic. On the other hand $E^k$ is locally free finitely generated hence the assumption in the lemma guarantees that $\text{Hom}(E^k,-)$ is an exact functor, hence we get the acyclicity of $\text{Hom}^{\bullet}(E,G)$.
\qed \end{proof}

\begin{lemma}\label{lemma: homotopy inverse on good covers}
Let $U$ be a subset of $X$ which satisfies $H^k(U,\mathcal{F})=0$ for any quasi-coherent sheaf $\mathcal{F}$ on $U$ and any $k\geq 1$.  Suppose we have chain maps $r: E^{\bullet}\to F^{\bullet}$ and $s: G^{\bullet}\to F^{\bullet}$ between complexes of sheaves on $U$, where $E^{\bullet}$ is a bounded above complex of finitely generated locally free sheaves, and $F^{\bullet}$ and $G^{\bullet}$ are quasi-coherent. Moreover $s$ is a quasi-isomorphism.  Then $r$ factors through $s$ up to homotopy, i.e. there exists a chain map $r^{\prime}: E^{\bullet}\to G^{\bullet}$ such that $s\circ r^{\prime}$ is homotopic to $r$.
\end{lemma}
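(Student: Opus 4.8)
The plan is to reduce the statement to the acyclicity of a suitable $\mathrm{Hom}$-complex, which is exactly what Lemma~\ref{lemma: acyclic of complex on good covers} supplies. First I would form the ordinary mapping cone $C^\bullet := \mathrm{Cone}(s)$ of the chain map $s$, so that $C^n = G^{n+1}\oplus F^n$ with the usual cone differential. Since $G^\bullet$ and $F^\bullet$ are quasi-coherent, so is $C^\bullet$; and since $s$ is a quasi-isomorphism, $C^\bullet$ is acyclic. Because $E^\bullet$ is by hypothesis a bounded above complex of finitely generated locally free sheaves on $U$, Lemma~\ref{lemma: acyclic of complex on good covers} applies and shows that $\mathrm{Hom}^\bullet(E,C)$ is acyclic. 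Equivalently, applying the (exact, since it splits in each degree) functor $\mathrm{Hom}^\bullet(E,-)$ to the split short exact sequence of complexes $0 \to F^\bullet \to C^\bullet \to G^\bullet[1] \to 0$ exhibits $\mathrm{Hom}^\bullet(E,C)$ as the cone of $s_\ast\colon \mathrm{Hom}^\bullet(E,G)\to \mathrm{Hom}^\bullet(E,F)$; acyclicity of this cone means $s_\ast$ is a quasi-isomorphism, so in particular it induces an isomorphism $H^0\mathrm{Hom}^\bullet(E,G)\xrightarrow{\sim}H^0\mathrm{Hom}^\bullet(E,F)$.

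Next, the chain map $r\colon E^\bullet\to F^\bullet$ is a degree-zero cocycle in $\mathrm{Hom}^\bullet(E,F)$, hence determines a class $[r]\in H^0\mathrm{Hom}^\bullet(E,F)$. By the isomorphism just obtained there is a degree-zero cocycle $r'\colon E^\bullet\to G^\bullet$ in $\mathrm{Hom}^\bullet(E,G)$, that is, an honest chain map, with $s_\ast[r']=[r]$. Since $s_\ast[r']=[s\circ r']$, this says $[s\circ r']=[r]$ in $H^0\mathrm{Hom}^\bullet(E,F)$, i.e.\ $s\circ r'$ and $r$ differ by a coboundary in the $\mathrm{Hom}$-complex, i.e.\ they are chain homotopic. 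This is precisely the desired factorization. If one prefers an explicit homotopy: acyclicity of $\mathrm{Hom}^\bullet(E,C)$ produces a degree $-1$ element $h$ whose differential is the image of $r$ under $F^\bullet\hookrightarrow C^\bullet$; decomposing $h$ into its two block components, the $G^\bullet$-component is $r'$ and the $F^\bullet$-component is the null-homotopy of $s\circ r' - r$.

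The only point that requires genuine care is the bookkeeping of signs — in the cone differential and in the differential of the $\mathrm{Hom}$-complex — so that the $G^\bullet$-component extracted from $h$ really is a chain map (rather than an anti-chain map) and the $F^\bullet$-component really is a null-homotopy of $s\circ r'-r$ with the correct sign; this is routine once a sign convention is fixed, so I do not expect a substantive obstacle. I would also remark that boundedness above of $E^\bullet$ is used in an essential way, as it is a hypothesis of Lemma~\ref{lemma: acyclic of complex on good covers}, whereas no boundedness is needed on $F^\bullet$ or $G^\bullet$.
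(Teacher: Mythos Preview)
Your proof is correct and follows exactly the approach the paper takes: form the mapping cone of $s$, note it is acyclic, and invoke Lemma~\ref{lemma: acyclic of complex on good covers}. The paper's proof is a one-line sketch of precisely this argument, so your version is simply a more detailed unpacking of the same idea.
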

\begin{proof}
We can take the mapping cone of $s$, which is acyclic, then the result is a simple corollary of Lemma \ref{lemma: acyclic of complex on good covers}.
\qed \end{proof}

With these lemmas we have the following result for twisted perfect complexes.

\begin{proposition}\label{prop: homotopy invertible morphisms}
Let the cover $\{U_i\}$  satisfies   $H^k(U_i,\mathcal{F})=0$ for any $i$, any quasi-coherent sheaf $\mathcal{F}$ on $U_i$ and any $k\geq 1$. If $\mathcal{E}$ and $\mathcal{F}$ are both in the subcategory $\text{Tw}_{\text{perf}}(X)$, then a closed degree zero morphism $\phi$ between twisted complexes $\mathcal{E}$ and $\mathcal{F}$ is  a weak equivalence  if and only if $\phi$ is invertible in the homotopy category $\text{HoTw}_{\text{perf}}(X)$.
\end{proposition}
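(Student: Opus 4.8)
The plan is to prove the two implications separately; the implication ``homotopy invertible $\Rightarrow$ weak equivalence'' is the easy one. If $\psi\colon\mathcal F\to\mathcal E$ is a homotopy inverse of $\phi$, there are degree $-1$ endomorphisms $h$ of $\mathcal E$ and $h'$ of $\mathcal F$ with $\psi\phi-\text{id}_{\mathcal E}=dh$ and $\phi\psi-\text{id}_{\mathcal F}=dh'$. Reading off the bidegree $(0,0)$ component of each equation, and using that $(\psi\cdot\phi)^{0,0}=\psi^{0,0}\phi^{0,0}$ by Equation~\eqref{equation: composition of maps between graded sheaves}, that $(\text{id}_{\mathcal E})^{0,0}_i=\text{id}_{E_i}$, and that the $(0,0)$ part of $dh$ is exactly the $\text{Hom}$-complex differential of $h^{0,-1}$, one obtains $\psi^{0,0}_i\phi^{0,0}_i\sim\text{id}$ and $\phi^{0,0}_i\psi^{0,0}_i\sim\text{id}$ in $K(U_i)$ for every $i$. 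Hence each $\phi^{0,0}_i$ is a homotopy equivalence, in particular a quasi-isomorphism, so $\phi$ is a weak equivalence.

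For the converse, since $\text{Tw}_{\text{perf}}(X)$ is pre-triangulated (Proposition~\ref{prop:pre-triangulated}), it suffices to show that the mapping cone $\mathcal G=(G,c)=\text{Cone}(\phi)$ becomes a zero object in $\text{HoTw}_{\text{perf}}(X)$; then the triangle $\mathcal E\xrightarrow{\phi}\mathcal F\to\mathcal G\to\mathcal E[1]$ forces $\phi$ to be an isomorphism there. Note $\mathcal G$ really lies in $\text{Tw}_{\text{perf}}(X)$: its components $E^{n+1}_i\oplus F^n_i$ are strictly perfect and the non-degeneracy condition holds by Lemma~\ref{lemma: nondegenerate of mapping cone}; moreover, by Equation~\eqref{equation: diff in mapping cone} the complex $(G^\bullet_i,c^{0,1}_i)$ is a sign-twist of the ordinary mapping cone of the quasi-isomorphism $\phi^{0,0}_i$, hence an \emph{acyclic} bounded complex of finitely generated locally free $\mathcal O$-modules on $U_i$. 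Since an object of an additive category with $\text{id}=0$ is a zero object, it is enough to produce an $h=\sum_{k\ge 0}h^{k,-1-k}\in\text{Hom}^{-1}(\mathcal G,\mathcal G)$ with $\text{id}_{\mathcal G}=dh$, and I would construct such an $h$ by induction on the \v{C}ech degree $k$.

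The induction runs as follows. Expanding $dh=\delta h+c\cdot h+h\cdot c$, the bidegree $(0,0)$ component of $dh=\text{id}_{\mathcal G}$ reads $c^{0,1}_i\,h^{0,-1}_i+h^{0,-1}_i\,c^{0,1}_i=\text{id}_{G_i}$, so $h^{0,-1}_i$ must be a contracting homotopy of $(G^\bullet_i,c^{0,1}_i)$; such a homotopy exists because $\text{Hom}^\bullet(G_i,G_i)$ is acyclic by Lemma~\ref{lemma: acyclic of complex on good covers} (take both complexes there equal to $G^\bullet_i$) while $\text{id}_{G_i}$ is a $0$-cocycle in it. For $n\ge 1$ the bidegree $(n,-n)$ component of $dh=\text{id}_{\mathcal G}$ takes the shape
$$
c^{0,1}\cdot h^{n,-1-n}\pm h^{n,-1-n}\cdot c^{0,1}=\Omega^{n},
$$
where $\Omega^{n}$ is an explicit expression in the Maurer--Cartan data $c$ and in the already constructed $h^{0,-1},\dots,h^{n-1,-n}$; that is, $h^{n,-1-n}_{i_0\ldots i_n}$ must be a $d_{\text{Hom}}$-primitive of $\pm\Omega^{n}_{i_0\ldots i_n}$ over $U_{i_0\ldots i_n}$. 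Using the Maurer--Cartan equation for $c$, the Leibniz rule (Proposition~\ref{prop: Leibniz for Cech differential}) and the equations already solved in lower \v{C}ech degree, one checks $\Omega^{n}$ is $d_{\text{Hom}}$-closed; since $G^\bullet_{i_n}$ is bounded and locally free and $G^\bullet_{i_0}$ restricts to an acyclic complex on $U_{i_0\ldots i_n}$, Lemma~\ref{lemma: acyclic of complex on good covers} applied on that intersection gives the acyclicity of $\text{Hom}^\bullet(G_{i_n},G_{i_0})$, hence the required primitive exists. Because the $G^\bullet_i$ are bounded, $h^{k,-1-k}$ (and $c^{k,1-k}$) vanish for $k$ large, so only finitely many of these equations are nontrivial, $\Omega^n=0$ for large $n$, and the process terminates, producing $h$.

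The main obstacle is exactly the inductive step: writing $\Omega^{n}$ down precisely as the ``lower \v{C}ech degree'' part of $dh-\text{id}_{\mathcal G}$ and, through careful sign bookkeeping with Equations~\eqref{equation: composition of maps between graded sheaves} and~\eqref{equation: delta on maps} and the Maurer--Cartan equation, proving that it is always a $d_{\text{Hom}}$-cocycle — this is what makes Lemma~\ref{lemma: acyclic of complex on good covers} applicable. A secondary point deserving care is that the acyclicity lemmas must be invoked on all multiple intersections $U_{i_0\ldots i_n}$, so one needs the open cover to be good enough that these intersections, too, have vanishing higher cohomology of quasi-coherent sheaves.
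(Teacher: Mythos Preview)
Your approach is correct and takes a genuinely different route from the paper's. The paper constructs a homotopy inverse $\psi$ directly: it first produces $\psi^{0,0}_i\colon F^\bullet_i\to E^\bullet_i$ by applying Lemma~\ref{lemma: homotopy inverse on good covers} on each $U_i$, and then extends the $\psi^{0,0}_i$ to a closed degree-zero morphism $\psi$ (and checks it is a two-sided homotopy inverse) via a spectral-sequence argument imported from \cite{block2010duality}. You instead pass to the cone $\mathcal G$ and build a contracting homotopy by induction on the \v{C}ech degree. Both arguments ultimately rest on the same acyclicity input (Lemma~\ref{lemma: acyclic of complex on good covers}), but your cone formulation is a bit more economical: the local acyclicity of $(G_i^\bullet,c^{0,1}_i)$ drops out of the definition of weak equivalence, so you never need the lifting Lemma~\ref{lemma: homotopy inverse on good covers}, and the obstruction-theoretic step you describe is exactly the hands-on version of the spectral-sequence extension.

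One refinement regarding your final caveat: as you note, Lemma~\ref{lemma: acyclic of complex on good covers} requires the cohomological vanishing on whatever open set it is applied to, while the proposition assumes it only for the single $U_i$'s. But in fact no strengthening is needed. Once the $(0,0)$ step has produced a contracting homotopy $h^{0,-1}_{i}$ of $(G^\bullet_{i},c^{0,1}_{i})$ on $U_{i}$ (and \emph{this} uses the hypothesis only on $U_{i}$), that sheaf map restricts to every $U_{i_0\ldots i_n}\subset U_{i_0}$, and post-composition with $h^{0,-1}_{i_0}|_{U_{i_0\ldots i_n}}$ gives a contracting homotopy of $\text{Hom}^\bullet(G_{i_n},G_{i_0})$ over the intersection. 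Hence the acyclicity you need at every stage of the induction already follows from the single-$U_i$ hypothesis, and you may drop the ``secondary point'' entirely.
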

\begin{proof}
It is obvious that homotopy invertibility implies weak equivalence.

For the other direction, we know $\phi$ is a weak equivalence, hence $\phi^{0,0}_i: E^{\bullet}_i\to F^{\bullet}_i$ is a quasi-isomorphism for each $i$. Since $F^{\bullet}_i$ is a bounded complex of finitely generated locally free sheaves, we apply Lemma \ref{lemma: homotopy inverse on good covers} and get
$$
\psi^{0,0}_i: F^{\bullet}_i\to E^{\bullet}_i
$$
such that $\phi^{0,0}_i\circ \psi^{0,0}_i$ is homotopic to $id_{F^{\bullet}_i}$. It is clear that $\psi^{0,0}_i$ is also a quasi-isomorphism and gives the two-side homotopy inverse of $\phi^{0,0}_i$.

The remaining task is to extend the $\psi^{0,0}_i$'s to a degree zero cocycle $\psi^{\bullet,-\bullet}$ in Tw$(X)$ and to show that it gives the homotopy inverse of $\phi^{\bullet,-\bullet}$. This is a simple spectral sequence argument which is the same as the proof of Proposition 2.9 in \cite{block2010duality}.
\qed \end{proof}

\begin{remark}
The result of Proposition \ref{prop: homotopy invertible morphisms} is no longer true if one of  $\mathcal{E}$ and $\mathcal{F}$ is not a twisted perfect complex.
\end{remark}

We also have the following result.

\begin{proposition}\label{prop: homotopy inverse for twisted complexes}
Let $\{U_i\}$ be an open cover of $X$ such that for any finite intersection $U_I$ we have $H^k(U_I, \mathcal{F})=0$ for any quasi-coherent sheaf $\mathcal{F}$ on $U_I$ and any $k\geq 1$. Let $\mathcal{E}$ be a twisted perfect complex and $\mathcal{F}$, $\mathcal{G}$ be twisted complexes consisting of quasi-coherent sheaves on each $U_i$. Let $\varphi: \mathcal{G}\to \mathcal{F}$ be a weak equivalence. Then any closed morphism $\phi: \mathcal{E}\to \mathcal{F}$ factors through $\varphi$ up to homotopy, i.e.  there exists a chain map $\theta: \mathcal{E}\to \mathcal{G}$ such that $\varphi \cdot \theta$ is homotopic to $\phi$.
\end{proposition}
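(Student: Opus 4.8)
The strategy is to reduce the statement to an acyclicity property of a morphism complex in $\mathrm{Tw}(X)$ and then to conclude by a formal argument in the triangulated category $\mathrm{HoTw}(X)$. All four twisted complexes involved are objects of $\mathrm{Tw}(X)$, with $\mathcal{E}$ lying in the full subcategory $\mathrm{Tw}_{\mathrm{perf}}(X)$. Write $\mathcal{E}=(E^{\bullet},a)$, $\mathcal{F}=(F^{\bullet},f)$, $\mathcal{G}=(G^{\bullet},g)$, and let $\mathcal{C}$ be the mapping cone of $\varphi$ (Definition \ref{defi: mapping cone}). Since $\mathcal{F}$ and $\mathcal{G}$ have quasi-coherent components and, by Lemma \ref{lemma: nondegenerate of mapping cone}, $\mathcal{C}$ satisfies the non-degeneracy condition, $\mathcal{C}$ is again an object of $\mathrm{Tw}(X)$ with quasi-coherent components, and we obtain a distinguished triangle
$$
\mathcal{G}\xrightarrow{\;\varphi\;}\mathcal{F}\xrightarrow{\;j\;}\mathcal{C}\xrightarrow{\;p\;}\mathcal{G}[1]
$$
in $\mathrm{HoTw}(X)$ (Proposition \ref{prop:pre-triangulated}). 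On each $U_i$ the complex $(C^{\bullet}_i,c^{0,1}_i)$ is, up to shift and sign, the ordinary mapping cone of $\varphi^{0,0}_i\colon(G^{\bullet}_i,g^{0,1}_i)\to(F^{\bullet}_i,f^{0,1}_i)$, and this map is a quasi-isomorphism because $\varphi$ is a weak equivalence; hence $(C^{\bullet}_i,c^{0,1}_i)$ is acyclic for every $i$.

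The key step is the claim that \emph{for a twisted perfect complex $\mathcal{E}$ and a twisted complex $\mathcal{N}=(N^{\bullet},n)$ with quasi-coherent components all of whose $(0,1)$-complexes $(N^{\bullet}_i,n^{0,1}_i)$ are acyclic, the morphism complex $C^{\bullet}(\mathcal{U},\mathrm{Hom}^{\bullet}(E,N))$ with the differential $d$ of Definition \ref{defi: twisted complex} is acyclic.} I would prove this with the spectral sequence of the filtration of $C^{\bullet}(\mathcal{U},\mathrm{Hom}^{\bullet}(E,N))$ by \v{C}ech degree. On the associated graded the induced differential involves only $a^{0,1}$ and $n^{0,1}$, so on the $(i_0,\ldots,i_p)$-component it is precisely the Hom-differential of the complexes $(E^{\bullet}_{i_p},a^{0,1}_{i_p})$ and $(N^{\bullet}_{i_0},n^{0,1}_{i_0})$ over $U_{i_0\ldots i_p}$. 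On that intersection $E^{\bullet}_{i_p}$ is a bounded complex of finitely generated locally free sheaves, $N^{\bullet}_{i_0}$ restricts to an acyclic complex of quasi-coherent sheaves, and $H^k(U_{i_0\ldots i_p},\mathcal{F})=0$ for all quasi-coherent $\mathcal{F}$ and $k\ge1$ by hypothesis; so Lemma \ref{lemma: acyclic of complex on good covers} makes each of these Hom-complexes acyclic. Thus the first page of the spectral sequence vanishes and, the filtration being exhaustive and complete, the claim follows; this is the same sort of argument used in the proof of Proposition \ref{prop: homotopy invertible morphisms}.

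Granting the claim, applied with $\mathcal{N}=\mathcal{C}$, the composite $j\cdot\phi\colon\mathcal{E}\to\mathcal{C}$ is a closed degree zero cocycle in an acyclic complex, hence a coboundary, i.e. $j\cdot\phi$ is null-homotopic. Applying the cohomological functor $\mathrm{Hom}_{\mathrm{HoTw}(X)}(\mathcal{E},-)$ to the triangle above gives an exact sequence
$$
\mathrm{Hom}(\mathcal{E},\mathcal{G})\xrightarrow{\;\varphi_{*}\;}\mathrm{Hom}(\mathcal{E},\mathcal{F})\xrightarrow{\;j_{*}\;}\mathrm{Hom}(\mathcal{E},\mathcal{C}),
$$
and since $j_{*}[\phi]=[j\cdot\phi]=0$ there is a class $[\theta]\in\mathrm{Hom}_{\mathrm{HoTw}(X)}(\mathcal{E},\mathcal{G})$ with $\varphi_{*}[\theta]=[\phi]$; any closed degree zero representative $\theta\colon\mathcal{E}\to\mathcal{G}$ then satisfies $\varphi\cdot\theta\simeq\phi$, which is what we want.

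The main obstacle is the acyclicity claim: one has to verify that the \v{C}ech-degree filtration of the product-type complex $C^{\bullet}(\mathcal{U},\mathrm{Hom}^{\bullet}(E,N))$ is complete and exhaustive so that the spectral sequence converges, and that the associated-graded differential is exactly the Hom-differential to which Lemma \ref{lemma: acyclic of complex on good covers} applies; the rest is the formal calculus of cones and distinguished triangles in the pre-triangulated category $\mathrm{Tw}(X)$. A more explicit alternative, bypassing the triangulated formalism, is to construct $\theta=\sum_k\theta^{k,-k}$ and a homotopy $h=\sum_k h^{k,-1-k}$ by induction on $k$, the $k=0$ step being exactly Lemma \ref{lemma: homotopy inverse on good covers} applied on each $U_i$ and each higher step an equation of the form $d_{\mathrm{Hom}}(-)=(\text{terms forced by the Maurer--Cartan equations})$ in $\mathrm{Hom}^{\bullet}(E^{\bullet}_{i_p},-)$ over $U_{i_0\ldots i_p}$, again solvable by Lemma \ref{lemma: acyclic of complex on good covers}.
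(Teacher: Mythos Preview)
Your argument is correct and is essentially a repackaging of the paper's proof in triangulated language. The paper proceeds by the explicit induction you sketch in your final paragraph: it starts with $\theta^{0,l}_i$ and a homotopy $\mu^{0,l-1}_i$ obtained from Lemma \ref{lemma: homotopy inverse on good covers} on each $U_i$, introduces the mapping cone $\mathcal{S}$ of $\varphi$, and then for each $k$ solves simultaneously for $\theta^{k,l-k}_{i_0\ldots i_k}$ and $\mu^{k,l-1-k}_{i_0\ldots i_k}$ as a preimage in the acyclic complex $\mathrm{Hom}^{\bullet}(E^{\bullet}_{i_k},S^{\bullet}_{i_0})$ on $U_{i_0\ldots i_k}$, using Lemma \ref{lemma: acyclic of complex on good covers} at each step. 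Your main route replaces this induction by the observation that the whole morphism complex $C^{\bullet}(\mathcal{U},\mathrm{Hom}^{\bullet}(E,\mathcal{C}))$ is acyclic (via the \v{C}ech-degree spectral sequence) and then invokes the long exact sequence of $\mathrm{Hom}_{\mathrm{HoTw}(X)}(\mathcal{E},-)$ on the cone triangle. This is cleaner conceptually and makes transparent why the statement holds in every degree at once; the price is that you must check completeness of the \v{C}ech filtration on the product complex so that $E_1=0$ really forces acyclicity (it does, since the transition maps $C/F^{p+1}\to C/F^p$ are surjective and each quotient is acyclic, so the Milnor sequence gives $H^*(C)=0$). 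The paper's hands-on induction avoids that verification at the cost of writing out the equations \eqref{equation: lift is a closed map} and \eqref{equation: lift is chain homotopic} explicitly; in substance the two arguments use exactly the same ingredients, namely the cone of $\varphi$ and Lemma \ref{lemma: acyclic of complex on good covers} applied over every multiple intersection.
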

\begin{proof}
The proof is inspired by that of Proposition 1.2.3 in \cite{o1985grothendieck}, see also Proposition \ref{prop: existence of resolution} below.

First we fix the notation. Let $\mathcal{E}=(E^{\bullet}_i,a)$, $\mathcal{F}=(F^{\bullet}_i,b)$, and $\mathcal{G}=(G^{\bullet}_i,c)$. Let $l$ be the degree of $\phi: \mathcal{E}\to \mathcal{F}$.

Since  $\varphi: \mathcal{G}\to \mathcal{F}$ is a weak equivalence, we know that on each $U_i$, $\varphi^{0,0}: G^{\bullet}_i\to F^{\bullet}_i$ is a quasi-isomorphism of complexes of quasi-coherent sheaves. By Lemma \ref{lemma: homotopy inverse on good covers} we know that $\phi^{0,l}_i: E^{\bullet}_i\to F^{\bullet+l}_i$  factors through $\varphi^{0,0}_i$ up to homotopy, i.e. there exist $\theta^{0,l}_i:E^{\bullet}_i\to G^{\bullet+l}_i$ and $\mu^{0,l-1}_i: E^{\bullet}_i\to F^{\bullet+l-1}_i$ such that
$$
c^{0,1}_i\theta^{0,l}_i-\theta^{0,l}_ia^{0,1}_i=0
$$
and
$$
\varphi^{0,0}_i\theta^{0,l}_i-\phi^{0,l}_i=b^{0,1}_i\mu^{0,l-1}_i-\mu^{0,l-1}_ia^{0,1}_i.
$$

Now we need to do the following two constructions:
\begin{enumerate}
\item Extend $\theta^{0,l}_i$ to a closed map $\theta: \mathcal{E}\to \mathcal{G}$ between twisted complexes.
\item Extend $\mu^{0,l-1}_i$ to a homotopy between $\varphi\cdot \theta$ and $\phi$.
\end{enumerate}

In more details, on each $U_{i_0\ldots i_k}$ we need to find $\theta^{k,l-k}_{i_0\ldots i_k}: E^{\bullet}_{i_k}\to G^{\bullet+l-k}_{i_0}$ and $\mu^{k,l-1-k}_{i_0\ldots i_k}:  E^{\bullet}_{i_k}\to F^{\bullet+l-1-k}_{i_0}$ such that
\begin{equation}\label{equation: lift is a closed map}
\sum_{j=1}^{k-1}(-1)^j\theta^{k-1,l+1-k}_{i_0\ldots \widehat{i_j}\ldots i_k}+\sum_{j=0}^k c^{j,1-j}_{i_0\ldots i_j}\cdot \theta^{k-j,l+j-k}_{i_j\ldots i_k}-(-1)^l\sum_{j=0}^k \theta^{j,l-j}_{i_0\ldots i_j}\cdot a^{k-j,1+j-k}_{i_j\ldots i_k}=0
\end{equation}
and
\begin{equation}\label{equation: lift is chain homotopic}
\begin{split}
\sum_{j=0}^k&\varphi^{j,-j}_{i_0\ldots i_j}\cdot  \theta^{k-j,l+j-k}_{i_j\ldots i_k}-\phi^{k,l-k}_{i_0\ldots i_k}=\\
&\sum_{j=1}^{k-1}(-1)^j\mu^{k-1,l-k}_{i_0\ldots \widehat{i_j}\ldots i_k}+\sum_{j=0}^k b^{j,1-j}_{i_0\ldots i_j}\cdot \mu^{k-j,l-1+j-k}_{i_j\ldots i_k}+(-1)^l\sum_{j=0}^k \mu^{j,l-1-j}_{i_0\ldots i_j}\cdot a^{k-j,1+j-k}_{i_j\ldots i_k}
\end{split}
\end{equation}

We use induction to find the $\theta$'s and $\mu$'s. First remember that the $\theta^{0,l}$'s and the $\mu^{0,l-1}$'s have already been achieved. Now assume that for any multi-index $I$ with cardinality $|I|<k+1$ we have found the $\theta$ and $\mu$ on $U_I$ and they satisfy Equation \eqref{equation: lift is a closed map} and Equation \eqref{equation: lift is chain homotopic} on $U_I$.

Then we need to find $\theta^{k,l-k}_{i_0\ldots i_k}$ and $\mu^{k,l-1-k}_{i_0\ldots i_k}$. To do this we consider the mapping cone of $\varphi$ and denote it by $\mathcal{S}=(S^n_i,s)$. By definition we know that $S^n_i=G^{n+1}_i\oplus F^n_i$ and the s's are given by
$$
s^{k,1-k}_{i_0\ldots i_k}=\begin{pmatrix}(-1)^{k-1} c^{k,1-k}_{i_0\ldots i_k}&0\\ (-1)^k\varphi^{k,-k}_{i_0\ldots i_k}&b^{k,1-k}_{i_0\ldots i_k}\end{pmatrix}.
$$
In particular on each $U_i$ we have
$$
s^{0,1}_i= \begin{pmatrix}- c^{0,1}_i&0\\  \varphi^{0,0}_i&b^{0,1}_i\end{pmatrix}.
$$
Since $\varphi: \mathcal{G}\to \mathcal{F}$ is a weak equivalence, we know that for each $U_i$, $(S^{\bullet}_i,s^{0,1}_i)$ is an acyclic  complex of quasi-coherent sheaves. By Lemma \ref{lemma: acyclic of complex on good covers} $\text{Hom}^{\bullet}(E^{\bullet}_i,S^{\bullet}_i)$ is also acyclic. Moreover, $\text{Hom}^{\bullet}(E^{\bullet}_{i_k},S^{\bullet}_{i_0})$ is acyclic on $U_{i_0\ldots i_k}$.

Then we rearrange Equation \eqref{equation: lift is a closed map} and Equation \eqref{equation: lift is chain homotopic} and get the following equations.
\begin{equation}\label{equation: lift is a closed map induction}
\begin{split}
\sum_{j=1}^{k-1}(-1)^j\theta^{k-1,l+1-k}_{i_0\ldots \widehat{i_j}\ldots i_k}+&\sum_{j=1}^k c^{j,1-j}_{i_0\ldots i_j}\cdot \theta^{k-j,l+j-k}_{i_j\ldots i_k}-(-1)^l\sum_{j=0}^{k-1} \theta^{j,l-j}_{i_0\ldots i_j}\cdot a^{k-j,1+j-k}_{i_j\ldots i_k}\\=
&(-1)^l \theta^{k,l-k}_{i_0\ldots i_k}\cdot a^{0,1}_{i_k}-c^{0,1}_{i_0}\cdot \theta^{k,l-k}_{i_0\ldots i_k}
\end{split}
\end{equation}
and
\begin{equation}\label{equation: lift is chain homotopic induction}
\begin{split}
&-\sum_{j=1}^k\varphi^{j,-j}_{i_0\ldots i_j}\cdot  \theta^{k-j,l+j-k}_{i_j\ldots i_k}+\phi^{k,l-k}_{i_0\ldots i_k}+\sum_{j=1}^{k-1}(-1)^j\mu^{k-1,l-k}_{i_0\ldots \widehat{i_j}\ldots i_k}\\
&+\sum_{j=1}^k b^{j,1-j}_{i_0\ldots i_j}\cdot \mu^{k-j,l-1+j-k}_{i_j\ldots i_k}+(-1)^l\sum_{j=0}^{k-1}\mu^{j,l-1-j}_{i_0\ldots i_j}\cdot a^{k-j,1+j-k}_{i_j\ldots i_k}\\
&= \varphi^{0,0}_{i_0}\cdot  \theta^{k,l-k}_{i_0\ldots i_k}-b^{0,1}_{i_0}\cdot \mu^{k,l-1-k}_{i_0\ldots i_k}-(-1)^l\mu^{k,l-1-k}_{i_0\ldots i_k}\cdot a^{0,1}_{i_k}.
\end{split}
\end{equation}

We denote the left hand side of Equation \eqref{equation: lift is a closed map induction} and Equation \eqref{equation: lift is chain homotopic induction} by $\Theta$ and $\Xi$ respectively. Notice that $\Theta$ and $\Xi$ do not involve $\theta^{k,l-k}_{i_0\ldots i_k}$ and $\mu^{k,l-1-k}_{i_0\ldots i_k}$. Moreover by induction assumption we can check that
$$
-c^{0,1}_{i_0}\cdot \Theta -(-1)^l\Theta\cdot a^{0,1}_{i_k}=0
$$
and
$$
\varphi^{0,0}_{i_0}\cdot\Theta+b^{0,1}_{i_0}\cdot\Xi-(-1)^l\Xi \cdot a^{0,1}_{i_k}=0.
$$
In other words $(\Theta,\Xi): E^{\bullet}_{i_k}\to S^{\bullet+l-k}_{i_0}$ is closed. Since $\text{Hom}^{\bullet}(E^{\bullet}_{i_k},S^{\bullet}_{i_0})$ is acyclic, we can find
$$
(\theta^{k,l-k}_{i_0\ldots i_k},\mu^{k,l-1-k}_{i_0\ldots i_k}): E^{\bullet}_{i_k}\to S^{\bullet+l-1-k}_{i_0}
$$
such that
$$
-c^{0,1}_{i_0}\cdot \theta^{k,l-k}_{i_0\ldots i_k}+(-1)^l\theta^{k,l-k}_{i_0\ldots i_k}\cdot a^{0,1}_{i_k}=\Theta
$$
and
$$
\varphi^{0,0}_{i_0}\cdot  \theta^{k,l-k}_{i_0\ldots i_k}-b^{0,1}_{i_0}\cdot \mu^{k,l-1-k}_{i_0\ldots i_k}-(-1)^l\mu^{k,l-1-k}_{i_0\ldots i_k}\cdot a^{0,1}_{i_k}=\Xi.
$$
In other words, Equation \eqref{equation: lift is a closed map induction} and Equation \eqref{equation: lift is chain homotopic induction} hold. We have finished the proof.
\qed \end{proof}

\begin{remark}
Proposition  \ref{prop: homotopy invertible morphisms} and \ref{prop: homotopy inverse for twisted complexes} are not explicitly given in \cite{toledo1978duality}, \cite{o1981trace}, \cite{o1985grothendieck}.
\end{remark}

\section{TWISTED COMPLEXES AND THE DG-ENHANCEMENT OF $D_{\text{perf}}(X)$}\label{section: twisted complex and dg-enhancement}

\subsection{The sheafification functor $\mathcal{S}$}\label{subsection: sheafification functor}
In this section we come to our main topic in this paper. First we fix a locally finite open cover   $\mathcal{U}=\{U_i\}$ of $X$. As we noticed in Caution \ref{ctn: twisted complex is not a sheaf}, a  twisted complex $\mathcal{E}=(E^{\bullet}_i,a)$ is not a complex of sheaves. Nevertheless in this subsection we associate a complex of sheaves to each twisted complex on $X$.

First we introduce a variation of the notations in Equation \eqref{equation: bigrade sheaves} and \eqref{equation: map with bigrade between graded sheaves}. Let $E^{\bullet}_{i_k}=\{E^{r}_{i_k}\}_{r\in \mathbb{Z}}$ be a graded sheaf of $\mathcal{O}_X$-modules on $U_{i_k}$ as before.  For $V$ an open subset of $X$, let
$$
C^{\bullet}(\mathcal{U},E^{\bullet};V)=\prod_{p,q}C^p(\mathcal{U},E^q;V)
$$
be the bigraded complex on $V$. More precisely, an element $c^{p,q}$ of $C^p(\mathcal{U},E^q;V)$ consists of a section $c^{p,q}_{i_0\ldots i_p}$ of $E^{q}_{i_0}$ over each non-empty intersection $U_{i_0\ldots i_n}\cap V$. If $U_{i_0\ldots i_n}\cap V=\emptyset$, let the component on $U_{i_0\ldots i_n}\cap V$ simply be zero.

Similarly if another  graded sheaf $F^{\bullet}_{i_k}$ of $\mathcal{O}_X$-modules is given on each $U_{i_k}$, and $V$ is an open subset of $X$, we can consider the bigraded complex
$$
C^{\bullet}(\mathcal{U},\text{Hom}^{\bullet}(E,F);V)=\prod_{p,q}C^p(\mathcal{U},\text{Hom}^q(E,F);V).
$$
An element $u^{p,q}$ of $C^p(\mathcal{U},\text{Hom}^q(E,F);V)$ gives a section $u^{p,q}_{i_0\ldots i_p}$ of $\text{Hom}^q_{\mathcal{O}_X-\text{Mod}}(E^{\bullet}_{i_p},F^{\bullet}_{i_0})$ over each non-empty intersection $U_{i_0\ldots i_n}\cap V$. If $U_{i_0\ldots i_n}\cap V=\emptyset$, let the component on $U_{i_0\ldots i_n}\cap V$ simply  be zero.

Moreover, let $\mathcal{E}=(E^{\bullet}_i,a)$ be a  twisted complex, recall that in Definition \ref{defi: differential deltaa} we defined a differential
$$
\delta_a=\delta+a
$$
on $C^{\bullet}(\mathcal{U},E^{\bullet})$. Now let $V$ be an open subset of $X$, we can restrict $\delta_a$ to $V$ to get a differential on $C^{\bullet}(\mathcal{U},E^{\bullet};V)$.

With all these notations, we can introduce the following definition.

\begin{definition}\label{defi: sheaf associated to twisted complex}
For a twisted complex $\mathcal{E}=(E^{\bullet}_i,a)$, we define the associated complex of sheaves $\mathcal{S}(\mathcal{E})$ as follows: for each $n$, the degree $n$ part $\mathcal{S}^n(\mathcal{E})$ is a sheaf on $X$ such that for any open subset $V$ of $X$
$$
\mathcal{S}^n(\mathcal{E})(V)=\prod_{p+q=n}C^p(\mathcal{U},E^q;V).
$$

The differential on $\mathcal{S}^{\bullet}(\mathcal{E})$ is defined to be the sheafification of $\delta_a=\delta+a$.  More precisely, for each open subset $V$ of $X$, the differential
$$
\mathcal{S}^n(\mathcal{E})(V)\to \mathcal{S}^{n+1}(\mathcal{E})(V)
$$
is given by $\delta+a$ restricted to $V$. We still denote it by $\delta_a$ since there is no danger of confusion.

It is obvious that $\mathcal{S}^n(\mathcal{E})$ is a sheaf of $\mathcal{O}_X$-module for each $n$ and $\delta_a: \mathcal{S}^n(\mathcal{E}) \to \mathcal{S}^{n+1}(\mathcal{E})$ is a map of $\mathcal{O}_X$-modules.
\end{definition}

Now we turn to the morphisms. Let $\phi: \mathcal{E}\to \mathcal{F}$ be a degree $n$ morphism in Tw$(X)$. We can define the associated sheaf morphism
$$
\mathcal{S}(\phi): \mathcal{S}^{\bullet}(\mathcal{E})\to \mathcal{S}^{\bullet+n}(\mathcal{F})
$$ in the same spirit as Definition \ref{defi: sheaf associated to twisted complex}, i.e. by restricting to each of the $C^p(\mathcal{U},E^q;V)$'s.

In fact we can view $\mathcal{S}^{\bullet}(\mathcal{E})$ in another way. For this we recall some definitions in sheaf theory. Let $\mathcal{F}$ be any sheaf of $\mathcal{O}_X$-modules on $X$ and $U$ be an open subset of $X$ with $j: U\to X$ be the inclusion map. We denote the restriction sheaf of $\mathcal{F}$ on $U$ by $\mathcal{F}|_{U}$. The pushforward of $\mathcal{F}|_{U}$ is denoted by $j_*(\mathcal{F}|_{U})$ and it will be a sheaf of $\mathcal{O}_X$-modules on $X$ again and we also denote it by $\mathcal{F}|_{U}$ if there is no confusion.

\begin{remark}\label{remark: support of restriction}
We do not use the fancy pushforward $j_!$ in this paper.
\end{remark}

Then we have
\begin{equation}\label{equation: sheafification functor: alternative}
\mathcal{S}^n(\mathcal{E})=\prod_{p+q=n}E^q_{i_0}|_{U_{i_0\ldots i_p}}
\end{equation}
as a sheaf and the differential $\delta_a=\delta+a$ and the morphism $\mathcal{S}(\phi)$ are defined likewise by restriction.


 In conclusion we have the following definition.

\begin{definition}\label{defi: sheafification functor}[The sheafification functor]
The above construction defines  a dg-fuctor
$$
\mathcal{S}: \text{Tw}(X)\to \text{Sh}(X)
$$
and we call it the \emph{sheafification functor}.
\end{definition}

\begin{remark}\label{remark: sheafification of twisted perfect complex is quasi-coherent}
If the complexes $E^{\bullet}_i$ are bounded and the cover $\{U_i\}$ is locally finite, it is easy to see that the product in $\mathcal{S}^n(\mathcal{E})=\prod_{p+q=n}E^q_{i_0}|_{U_{i_0\ldots i_p}}$ is locally finite, hence the image of a twisted perfect  complex under $\mathcal{S}$ actually consists of quasi-coherent sheaves. In other words, the sheafification functor restricts to $\text{Tw}_{\text{perf}}(X)$ and gives
$$
\mathcal{S}: \text{Tw}_{\text{perf}}(X)\to \text{Qcoh}(X).
$$

Further study of the sheafification of twisted perfect complexes will be given in the next subsection.
\end{remark}

\subsection{The sheafification of twisted perfect complexes}\label{subsection: perfectness}
Let $\mathcal{E}$ be a twisted perfect  complex, we want to show that the associated complex of sheaves $(\mathcal{S}^{\bullet}(\mathcal{E}),\delta_a)$ is  perfect. In fact in this subsection we will get a more general result. The next proposition, which is important in our work, says that locally $(\mathcal{S}^{\bullet}(\mathcal{E}),\delta_a)$ contains the same information as $(E^{\bullet}_j,a^{0,1}_j)$ for each $j$.

\begin{proposition}\label{prop: associated sheaf locall isom to original twisted complex}[The local property of $\mathcal{S}$]
Let $\mathcal{E}=(E^{\bullet}_i,a)$ be a twisted complex and $(\mathcal{S}^{\bullet}(\mathcal{E}),\delta_a)$ be the associated complex of sheaves. Then for each $U_j$ the complex of sheaves $(\mathcal{S}^{\bullet}(\mathcal{E}),\delta_a)|_{U_j}$ is chain homotopy equivalent to $(E^{\bullet}_{j},a^{0,1}_{j})$, i.e. we have two morphisms
$$
f: (\mathcal{S}^{\bullet}(\mathcal{E}),\delta_a)|_{U_j} \to (E^{\bullet}_{j},a^{0,1}_{j})
$$
and
$$
g: (E^{\bullet}_{j},a^{0,1}_{j}) \to (\mathcal{S}^{\bullet}(\mathcal{E}),\delta_a)|_{U_j}
$$
such that
\begin{equation}\label{equation: splitting for associated sheaves}
f\circ g=id_{E^{\bullet}_j} \text{ and } g\circ f=id_{\mathcal{S}^{\bullet}(\mathcal{E})|_{U_j}} \text{ up to chain homotopy.}
\end{equation}
\end{proposition}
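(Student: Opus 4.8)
The plan is to write $f$ and $g$ almost explicitly in terms of the structure maps $a$, and then to build an explicit \v{C}ech-direction contracting homotopy on $U_j$ using the distinguished index $j$. I would take $f$ to be projection onto the ``$j$-th \v{C}ech-degree-zero slot'': for an open $V\subseteq U_j$ and $c=(c^{p,q})\in\mathcal{S}^n(\mathcal{E})(V)$, set $f(c)=c^{0,n}_j\in E^n_j(V)$. Since $\delta$ has bidegree $(1,0)$, the \v{C}ech-degree-zero part of $\delta c$ vanishes, and by the composition formula \eqref{equation: composition of maps between graded sheaves} the \v{C}ech-degree-zero part of $a\cdot c$ at the index $j$ equals $a^{0,1}_j c^{0,n}_j$; hence $f$ is a chain map $(\mathcal{S}^{\bullet}(\mathcal{E}),\delta_a)|_{U_j}\to(E^{\bullet}_j,a^{0,1}_j)$. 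For $g$ one is forced to use the whole tower of structure maps ``ending in $j$'': for $e\in E^m_j(V)$ set $g(e)^{p,q}_{i_0\ldots i_p}=\pm\, a^{p+1,-p}_{i_0\ldots i_p j}(e)$ for $p+q=m$, with a sign to be fixed below. Using only $a^{1,0}_{\cdot j}$ would not give a chain map, since the chain-map condition in \v{C}ech degree $1$ would then read $a^{1,0}_{i_0 j}=a^{1,0}_{i_0 i_1}a^{1,0}_{i_1 j}$, which holds only up to homotopy; with the full tower, the statement that $g$ is a chain map is precisely a repackaging of the Maurer--Cartan equation \eqref{equation: MC for twisted complex explicit}, where each $\delta a^{k-1,2-k}_{\ldots j}$ is traded for the corresponding sum of products $a\cdot a$ and these telescope against the terms of $a\cdot g(e)$ and of $g(a^{0,1}_j e)$. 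Finally $f(g(e))=g(e)^{0,m}_j=\pm\, a^{1,0}_{jj}(e)$, and the non-degeneracy condition built into Definition \ref{defi: twisted complex} says exactly that $a^{1,0}_{jj}\colon(E^{\bullet}_j,a^{0,1}_j)\to(E^{\bullet}_j,a^{0,1}_j)$ is chain homotopic to the identity; absorbing the sign into $g$ so that $f\circ g=a^{1,0}_{jj}$, we get $f\circ g=\mathrm{id}_{E^{\bullet}_j}$ up to chain homotopy.

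The remaining and main point is $g\circ f\simeq\mathrm{id}_{\mathcal{S}^{\bullet}(\mathcal{E})|_{U_j}}$. For this I would produce the homotopy $h\colon\mathcal{S}^n(\mathcal{E})|_{U_j}\to\mathcal{S}^{n-1}(\mathcal{E})|_{U_j}$ by ``appending $j$ to the \v{C}ech index'': $h(c)^{p,q}_{i_0\ldots i_p}=\pm\, c^{p+1,q}_{i_0\ldots i_p j}$ (one appends $j$ rather than prepending it, so the target sheaf $E^q_{i_0}$ does not change). The identity $\delta_a h+h\,\delta_a=\mathrm{id}-g\circ f$ then breaks into two parts. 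The purely \v{C}ech part $\delta h+h\,\delta$ is the classical simplicial computation showing that the \v{C}ech complex of a cover possessing a distinguished member (here $U_j=U_j\cap U_j$ inside the restricted cover $\{U_i\cap U_j\}$, which is the whole of $U_j$) is contractible: the ``interior'' faces cancel in pairs and the single surviving term is $\mathrm{id}$, coming from the face that deletes the appended $j$. The remaining part $a\cdot h(c)+h(a\cdot c)$ likewise cancels term by term, via the composition formula \eqref{equation: composition of maps between graded sheaves} and associativity, with the sole exception of the extreme term $\pm\, a^{p+1,-p}_{i_0\ldots i_p j}\big(c^{0,\bullet}_j\big)=(g\circ f)(c)^{p,\bullet}_{i_0\ldots i_p}$, which supplies the $-g\circ f$ on the right-hand side. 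I expect the genuine difficulty to be purely one of bookkeeping: arranging the signs --- the $(-1)^{qr}$'s from \eqref{equation: composition of maps between graded sheaves}, the $(-1)^k$'s from the definition of $\delta$, and the signs in the definitions of $g$ and $h$ --- so that all of these cancellations are exact. The underlying combinatorics (simplicial identities together with Maurer--Cartan) is what forces the result.

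To finish, I would note that $f$, $g$ and $h$ are built entirely from restriction maps and the $\mathcal{O}_X$-linear maps $a$, so they are morphisms of complexes of $\mathcal{O}_X$-modules and the homotopies are $\mathcal{O}_X$-linear; hence the construction genuinely yields a chain homotopy equivalence of complexes of $\mathcal{O}_{U_j}$-modules, as asserted. In particular, when $\mathcal{E}$ is a twisted \emph{perfect} complex each $(E^{\bullet}_j,a^{0,1}_j)$ is a bounded complex of finite locally free sheaves, so this local statement will immediately feed into the proof that $\mathcal{S}(\mathcal{E})$ is a perfect complex on $X$.
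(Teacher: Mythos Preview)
Your proposal is correct and follows essentially the same route as the paper: the same projection $f$, the same map $g$ built from the tower $a^{p+1,-p}_{i_0\ldots i_p j}$, the same identification $f\circ g=a^{1,0}_{jj}$ reduced to the non-degeneracy condition, and the same contracting homotopy $h$ given by appending $j$ to the \v{C}ech index. The only difference is presentational: where you defer the sign bookkeeping, the paper introduces an auxiliary map $\epsilon^p_{i_0\ldots i_p}=(-1)^p\,\mathrm{id}$ (accounting for the \v{C}ech-degree shift when passing from $U_{i_0\ldots i_p j}$ to $U_{i_0\ldots i_p}$), proves a short lemma that $\epsilon$ anti-commutes with $a$ and $\delta$, and then carries out the computation of $\delta_a h+h\,\delta_a$ line by line with the explicit signs $(hc)_{i_0\ldots i_k}=(-1)^k c_{i_0\ldots i_k j}$.
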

\begin{proof}
The proof is long and involves several technical lemmas.

First we can construct the chain map
$$
f: (\mathcal{S}^{\bullet}(\mathcal{E})(V),\delta_a) \to (E^{\bullet}_{j}(V),a^{0,1}_{j})
$$
for $V\subset U_j$ by projecting to the $(0,n)$ component. In more details, we know that
$$
\mathcal{S}^n(\mathcal{E})(V)=\prod_{p+q=n}C^p(\mathcal{U},E^q;V).
$$
The $(0,n)$ component $C^0(\mathcal{U},E^n;V)$ has a further decomposition
$$
C^0(\mathcal{U},E^n;V)=\prod_{i_0}E^n_{i_0}(V\cap U_{i_0}).
$$
We also notice that $j$ appears in one of the $i_0$'s. Then $f: (\mathcal{S}^{\bullet}(\mathcal{E})(V),\delta_a) \to (E^{\bullet}_{j}(V),a^{0,1}_{j})$ is given by first projecting to the $(0,n)$ component and then projecting to the $j$ component. It is easy to see that $f$ is a chain map.

The construction of the  map in the opposite direction
$$
g: (E^{\bullet}_{j}(V),a^{0,1}_{j}) \to (\mathcal{S}^{\bullet}(\mathcal{E})(V),\delta_a)
$$
is more complicated.  We first introduce the following auxiliary morphism
$$
\epsilon^p_{i_0\ldots i_p}: E^{\bullet}_{i_0}(U_{i_0\ldots i_pj}\cap V)\to E^{\bullet}_{i_0}(U_{i_0\ldots i_p}\cap V)
$$
as
$$
\epsilon^p_{i_0\ldots i_p}=(-1)^p~id.
$$
Sometimes we simply denote it by $\epsilon^{p}$. Since $V\subseteq U_j$, we have $U_{i_0\ldots i_p}\cap V\subseteq U_{i_0\ldots i_pj}$ hence the above formula makes sense.

Notice that the identity map $E^{n-p}_{i_0}(U_{i_0\ldots i_pj}\cap V)\to E^{n-p}_{i_0}(U_{i_0\ldots i_p}\cap V)$ shifts the \v{C}ech degree by $-1$ and hence we introduce the factor $(-1)^p$ to compensate it.

We have the following property of the maps $\epsilon^{\bullet}$'s.
\begin{lemma}\label{lemma: epsilon anticommute with a}
The $\epsilon^{\bullet}$'s   anti-commute with $a$ and $\delta$. More precisely, for a multi-index $i_0,\ldots ,i_{p+q}$, we have
\begin{equation}\label{equation: epsilon anticommute with a}
a^{p,1-p}_{i_0\ldots i_p}\epsilon^q_{i_p\ldots i_{p+q}}=-\epsilon^{p+q}_{i_0\ldots i_{p+q}}a^{p,1-p}_{i_0\ldots i_p}
\end{equation}
where both sides are considered as maps
$$
E^{\bullet}_{i_p}(U_{i_p\ldots i_{p+q}j}\cap V)\to E^{\bullet+1-p}_{i_0}(U_{i_0\ldots i_{p+q}}\cap V).
$$

As for $\delta$, we introduce a map $\tilde{\delta}$ on $U_{i_0\ldots i_pj}\cap V$ as
$$
(\tilde{\delta}c)_{i_0\ldots i_pj}=\sum_{k=1}^p(-1)^k c_{i_0\ldots \widehat{i_k}\ldots i_pj}.
$$
Then we have
\begin{equation}\label{equation: epsilon anticommute with delta}
\delta\epsilon^p=-\epsilon^{p+1}\tilde{\delta}.
\end{equation}
\end{lemma}
\begin{proof}[Proof of Lemma \ref{lemma: epsilon anticommute with a}]
First we prove that Equation \eqref{equation: epsilon anticommute with a} holds. Let $c\in E^{\bullet}_{i_p}(U_{i_p\ldots i_{p+q}j}\cap V)$ be with \v{C}ech degree $q+1$. By definition
$$
\epsilon^q_{i_p\ldots i_{p+q}}c=(-1)^qc\in E^{\bullet}_{i_p}(U_{i_p\ldots i_{p+q}}\cap V)
$$
has \v{C}ech degree $q$. Then according to the sign convention in Equation \eqref{equation: action of maps on sheaves} we have
$$
a^{p,1-p}_{i_0\ldots i_p}\epsilon^q_{i_p\ldots i_{p+q}}c=(-1)^qa^{p,1-p}_{i_0\ldots i_p}\cdot c=(-1)^q(-1)^{(1-p)q}a^{p,1-p}_{i_0\ldots i_p}  c=(-1)^{pq} a^{p,1-p}_{i_0\ldots i_p} ~ c.
$$

On the other hand we have
$$
a^{p,1-p}_{i_0\ldots i_p}\cdot c=(-1)^{(1-p)(1+q)}a^{p,1-p}_{i_0\ldots i_p} ~c
$$
hence
$$
\epsilon^{p+q}_{i_0\ldots i_{p+q}}a^{p,1-p}_{i_0\ldots i_p}\cdot c=(-1)^{p+q}(-1)^{(1-p)(1+q)}a^{p,1-p}_{i_0\ldots i_p} ~c=(-1)^{1+pq}a^{p,1-p}_{i_0\ldots i_p} ~c.
$$
Comparing the two sides we get
$$
a^{p,1-p}_{i_0\ldots i_p}\epsilon^q_{i_p\ldots i_{p+q}}=-\epsilon^{p+q}_{i_0\ldots i_{p+q}}a^{p,1-p}_{i_0\ldots i_p}.
$$

Equation \eqref{equation: epsilon anticommute with delta} follows similarly and we leave it as an exercise.
\qed \end{proof}

We move on to the definition of $g$. Recall that
$$
\mathcal{S}^n(\mathcal{E})(V)=\prod_{p+q=n}C^p(\mathcal{U},E^q;V)=\prod_{p\geq 0}\prod_{i_0\ldots i_p}E^{n-p}_{i_0}(U_{i_0\ldots i_p}\cap V)
$$
and it is sufficient to define the projection of $g$ to each component. With the  help of the map $\epsilon^p$   we define that projection to be
$$
\epsilon^p\circ a^{p+1,-p}_{i_0\ldots i_pj}:E^n_j(V)\to  E^{n-p}_{i_0}(U_{i_0\ldots i_p}\cap V), ~p\geq 0.
$$

\begin{lemma}\label{lemma: g is a chaim map}
The map $g: (E^{\bullet}_{j}(V),a^{0,1}_{j}) \to (\mathcal{S}^{\bullet}(\mathcal{E})(V),\delta_a)$ defined above is a chain map.
\end{lemma}
\begin{proof}[Proof of Lemma \ref{lemma: g is a chaim map}]
It is a consequence of the Maurer-Cartan equation
$$
\delta a^{k-1,2-k}+ \sum_{i=0}^k a^{i,1-i}\cdot a^{k-i,1-k+i}=0
$$
together with the anti-commute properties in Lemma \ref{lemma: epsilon anticommute with a}.
\qed \end{proof}

Now we need to prove that $f$ and $g$ satisfy the relations in Equation \eqref{equation: splitting for associated sheaves}. First it is obvious that
$$
f\circ g=a^{1,0}_{jj}: (E^{\bullet}_{j}(V),a^{0,1}_{j}) \to(E^{\bullet}_{j}(V),a^{0,1}_{j}).
$$
By definition $a^{1,0}_{jj}=id_{E^{\bullet}_j}$ up to homotopy hence we get $f\circ g=id_{E^{\bullet}_j}$ up to homotopy.

The other half is more complicated. We need to build a map
$$
h: \mathcal{S}^{\bullet}(\mathcal{E})(V)\to \mathcal{S}^{\bullet-1}(\mathcal{E})(V)
$$
such that
$$
g\circ f-id=\delta_a h+h\delta_a.
$$

In fact we define $h$ as
$$
(h c)_{i_0\ldots i_k}:=(-1)^k c_{i_0\ldots i_k j}.
$$
Clearly $h$ is a sheaf map with degree $-1$. Moreover we have
\begin{equation*}
\begin{split}
&(\delta_a h c)_{i_0\ldots i_k}\\
=&(\delta (hc))_{i_0\ldots i_k}+(a\cdot (hc))_{i_0\ldots i_k}\\
=& \sum_{l=1}^k(-1)^l(hc)_{i_0\ldots \widehat{i_l}\ldots i_k}+\sum_{l=0}^k a^{l,1-l}_{i_0\ldots i_l}\cdot(hc)_{i_l\ldots i_k}\\
=& \sum_{l=1}^k(-1)^l(-1)^{k-1}c_{i_0\ldots \widehat{i_l}\ldots i_kj}+\sum_{l=0}^k a^{l,1-l}_{i_0\ldots i_l}\cdot(hc)_{i_l\ldots i_k} .
\end{split}
\end{equation*}
For the second term $a^{l,1-l}_{i_0\ldots i_l}\cdot(hc)_{i_l\ldots i_k}$ we need to be more careful. We know that $(hc)_{i_l\ldots i_k}$ has \v{C}ech degree $k-l$ hence
\begin{equation*}
\begin{split}
&a^{l,1-l}_{i_0\ldots i_l}\cdot(hc)_{i_l\ldots i_k}\\
=&(-1)^{(1-l)(k-l)}a^{l,1-l}_{i_0\ldots i_l}\circ(hc)_{i_l\ldots i_k}\\
=&(-1)^{(1-l)(k-l)}(-1)^{k-l}a^{l,1-l}_{i_0\ldots i_l}\circ c_{i_l\ldots i_kj}\\
=& (-1)^{lk-l}a^{l,1-l}_{i_0\ldots i_l}\circ c_{i_l\ldots i_kj}.
\end{split}
\end{equation*}
In conclusion we have
\begin{equation}\label{equation: delta_a h}
(\delta_a h c)_{i_0\ldots i_k}=\sum_{l=1}^k(-1)^{k+l-1}c_{i_0\ldots \widehat{i_l}\ldots i_kj}+\sum_{l=0}^k (-1)^{lk-l}a^{l,1-l}_{i_0\ldots i_l}\circ c_{i_l\ldots i_kj}.
\end{equation}

On the other hand we have
\begin{equation*}
\begin{split}
(h& \delta_a  c)_{i_0\ldots i_k}=(-1)^k(\delta_a  c)_{i_0\ldots i_kj}\\
=&(-1)^k[(\delta c)+(a\cdot c)]_{i_0\ldots i_kj}\\
=&(-1)^k[\sum_{l=1}^k(-1)^lc_{i_0\ldots \widehat{i_l}\ldots i_kj}+(-1)^{k+1}c_{i_0\ldots i_k}+\sum_{l=0}^ka^{l,1-l}_{i_0\ldots i_l}\cdot c_{i_l\ldots i_kj}+a^{k+1,-k}_{i_0\ldots i_kj}\cdot c_j]\\
=&(-1)^k[\sum_{l=1}^k(-1)^lc_{i_0\ldots \widehat{i_l}\ldots i_kj}+(-1)^{k+1}c_{i_0\ldots i_k}+\sum_{l=0}^k(-1)^{(l-1)(k-l+1)}a^{l,1-l}_{i_0\ldots i_l}\circ c_{i_l\ldots i_kj}\\
&+a^{k+1,-k}_{i_0\ldots i_kj}\circ c_j]\\
=& \sum_{l=1}^k(-1)^{k+l}c_{i_0\ldots \widehat{i_l}\ldots i_kj}-c_{i_0\ldots i_k}+\sum_{l=0}^k(-1)^{lk+l+1}a^{l,1-l}_{i_0\ldots i_l}\circ c_{i_l\ldots i_kj}+(-1)^ka^{k+1,-k}_{i_0\ldots i_kj}\circ c_j.
\end{split}
\end{equation*}

In short we have
\begin{equation}\label{equation: h delta_a}
\begin{split}
&(h \delta_a  c)_{i_0\ldots i_k} \\
=\sum_{l=1}^k(-1)^{k+l}c_{i_0\ldots \widehat{i_l}\ldots i_kj}-&c_{i_0\ldots i_k}+\sum_{l=0}^k(-1)^{lk+l+1}a^{l,1-l}_{i_0\ldots i_l}\circ c_{i_l\ldots i_kj}+(-1)^ka^{k+1,-k}_{i_0\ldots i_kj}\circ c_j.
\end{split}
\end{equation}

Comparing Equation \eqref{equation: delta_a h} and \eqref{equation: h delta_a} we get
$$
[\delta_a h c+h \delta_a  c]_{i_0\ldots i_k}=-c_{i_0\ldots i_k}+(-1)^ka^{k+1,-k}_{i_0\ldots i_kj}\circ c_j.
$$

Recall that $fc=c_j$ and
$$
g(fc)_{i_0\ldots i_k}=\epsilon^k a^{k+1,-k}_{i_0\ldots i_kj}\cdot c_j=(-1)^k a^{k+1,-k}_{i_0\ldots i_kj}\circ c_j
$$
hence we get the desired result
$$
[\delta_a h c+h \delta_a  c]_{i_0\ldots i_k}=-c_{i_0\ldots i_k}+g(fc)_{i_0\ldots i_k}.
$$
This finishes the proof of Proposition \ref{prop: associated sheaf locall isom to original twisted complex}.
\qed \end{proof}

The perfectness now is a direct corollary of Proposition \ref{prop: associated sheaf locall isom to original twisted complex}.

\begin{corollary}\label{coro: perfectness of sheafification}
If $\mathcal{E}=(E^{\bullet},a)$ is a twisted perfect complex, then the sheafification $\mathcal{S}^{\bullet}(\mathcal{E})$ is a perfect  complex on $(X,\mathcal{O}_X)$. In other words the sheafification functor $\mathcal{S}$ restricts to $\text{Tw}_{\text{perf}}(X)$  and gives the following dg-functor
$$
\mathcal{S}: \text{Tw}_{\text{perf}}(X)\to \text{Sh}_{\text{perf}}(X).
$$
\end{corollary}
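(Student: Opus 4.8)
The plan is to deduce the statement entirely from Proposition~\ref{prop: associated sheaf locall isom to original twisted complex}, which carries all the real content; the corollary is then just a matter of unwinding the definition of a perfect complex. First I would recall that, by Definition~\ref{defi: sheaf associated to twisted complex} (together with Definition~\ref{defi: differential deltaa}, which guarantees $\delta_a^2=0$), the pair $(\mathcal{S}^{\bullet}(\mathcal{E}),\delta_a)$ is already a genuine complex of $\mathcal{O}_X$-modules. So the only thing left to verify is the local condition appearing in Definition~\ref{defi: perfect complex}.

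To that end, fix a point $x\in X$. Since $\mathcal{U}=\{U_i\}$ is an open cover of $X$, there is an index $j$ with $x\in U_j$. Because $\mathcal{E}$ is a twisted \emph{perfect} complex, $E^{\bullet}_j$ is by Definition~\ref{defi: twisted perfect complex} a strictly perfect complex on $U_j$. By Proposition~\ref{prop: associated sheaf locall isom to original twisted complex}, the restriction $(\mathcal{S}^{\bullet}(\mathcal{E}),\delta_a)|_{U_j}$ is chain homotopy equivalent to $(E^{\bullet}_j,a^{0,1}_j)$, via the maps $f$, $g$ (with homotopy $h$) constructed there; all of these are $\mathcal{O}_X$-module morphisms, so this is a homotopy equivalence in the category of complexes of $\mathcal{O}_{U_j}$-modules, hence in particular an isomorphism in $D(\mathcal{O}_{U_j}\text{-mod})$. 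Therefore $\mathcal{S}^{\bullet}(\mathcal{E})|_{U_j}$ is isomorphic in $D(\mathcal{O}_{U_j}\text{-mod})$ to the strictly perfect complex $E^{\bullet}_j$, which is exactly the condition required at $x$. As $x\in X$ was arbitrary, $\mathcal{S}^{\bullet}(\mathcal{E})$ is a perfect complex on $(X,\mathcal{O}_X)$.

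Finally, since $\mathcal{S}$ is already established to be a dg-functor $\text{Tw}(X)\to\text{Sh}(X)$ (Definition~\ref{defi: sheafification functor}) and $\text{Sh}_{\text{perf}}(X)$ is a \emph{full} dg-subcategory of $\text{Sh}(X)$, the fact just proved — that every object of $\text{Tw}_{\text{perf}}(X)$ is sent into $\text{Sh}_{\text{perf}}(X)$ — automatically yields the restricted dg-functor $\mathcal{S}\colon\text{Tw}_{\text{perf}}(X)\to\text{Sh}_{\text{perf}}(X)$. There is no serious obstacle in this argument; the only points requiring a moment's care are that the homotopy equivalence of Proposition~\ref{prop: associated sheaf locall isom to original twisted complex} is genuinely $\mathcal{O}_X$-linear (so that it witnesses an isomorphism in the relevant derived category, not merely in the derived category of sheaves of abelian groups), and — as noted in Remark~\ref{remark: sheafification of twisted perfect complex is quasi-coherent} — that local finiteness of $\mathcal{U}$ together with boundedness of the $E^{\bullet}_i$ makes the products defining $\mathcal{S}^n(\mathcal{E})$ locally finite, so the construction is well behaved; the latter is not strictly needed for perfectness but is what makes the image land among quasi-coherent sheaves.
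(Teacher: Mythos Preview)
Your argument is correct and follows exactly the same route as the paper: apply Proposition~\ref{prop: associated sheaf locall isom to original twisted complex} on each $U_j$ to see that $\mathcal{S}^{\bullet}(\mathcal{E})|_{U_j}$ is homotopy equivalent (hence isomorphic in $K(U_j)$ and therefore in $D(\mathcal{O}_{U_j}\text{-mod})$) to the strictly perfect complex $(E^{\bullet}_j,a^{0,1}_j)$, and then conclude by the definition of perfectness. The paper's proof is simply the two-sentence version of what you wrote.
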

\begin{proof}
Proposition \ref{prop: associated sheaf locall isom to original twisted complex} tells us that $\mathcal{S}^{\bullet}(\mathcal{E})|_{U_j}$ is isomorphic to $(E^{\bullet}_j,a^{0,1}_j)$  in $K(U_j)$ hence by definition it  is perfect on $U_j$. Moreover this is true for any member $U_j$ of the open cover, therefore $\mathcal{S}^{\bullet}(\mathcal{E})$ is a perfect complex of sheaves on $(X,\mathcal{O}_X)$.
\qed \end{proof}

\begin{remark}\label{remark: sheafification is both quasi-coherent and perfect}
Corollary \ref{coro: perfectness of sheafification} together with Remark \ref{remark: sheafification of twisted perfect complex is quasi-coherent} tells us that actually we have a dg-functor
$$
\mathcal{S}: \text{Tw}_{\text{perf}}(X)\to \text{Qcoh}_{\text{perf}}(X).
$$
\end{remark}

Another consequence of Proposition \ref{prop: associated sheaf locall isom to original twisted complex} is the following criterion of weak equivalence. Recall that by Definition \ref{defi: quasi-isomorphism in Tw} a closed degree zero morphism $\phi^{\bullet,-\bullet}: \mathcal{E} \to \mathcal{F}$ is called a  weak equivalence if its $(0,0)$ component $\phi^{0,0}_i: (E^{\bullet}_i,a^{0,1})\to (F^{\bullet}_i,b^{0,1})$ is a quasi-isomorphism of complexes of $\mathcal{O}_X$-modules on $U_i$ for each $i$.

\begin{corollary}\label{coro: weakly equi and sheafification}[Criterion of weak equivalence]
A degree $0$ cocycle $\phi^{\bullet,-\bullet}: \mathcal{E} \to \mathcal{F}$ in Tw$(X)$ is a weak equivalence if and only if its sheafification
$$
\mathcal{S}(\phi): \mathcal{S}(\mathcal{E})\to \mathcal{S}(\mathcal{F})
$$
is a quasi-isomorphism.
\end{corollary}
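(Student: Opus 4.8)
The plan is to deduce this from Proposition~\ref{prop: associated sheaf locall isom to original twisted complex} together with the elementary fact that being a quasi-isomorphism of complexes of $\mathcal{O}_X$-modules can be checked locally on the base. First I would recall why the latter holds: for an open immersion $j\colon U\hookrightarrow X$ the restriction functor is exact, so $\mathcal{H}^n(\mathcal{A}^{\bullet})|_U\cong\mathcal{H}^n(\mathcal{A}^{\bullet}|_U)$ for every complex $\mathcal{A}^{\bullet}$; since a morphism of sheaves is an isomorphism if and only if its restriction to each member of an open cover is, a morphism $\psi$ of complexes of $\mathcal{O}_X$-modules is a quasi-isomorphism if and only if $\psi|_{U_j}$ is a quasi-isomorphism for every $U_j$ in the cover $\mathcal{U}$. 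Applied to $\psi=\mathcal{S}(\phi)$, this reduces the corollary to comparing $\mathcal{S}(\phi)|_{U_j}$ with $\phi^{0,0}_j$ for each fixed $j$.

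For a fixed $U_j$, I would invoke Proposition~\ref{prop: associated sheaf locall isom to original twisted complex} to get chain homotopy equivalences
$$
f_{\mathcal{E}}\colon(\mathcal{S}^{\bullet}(\mathcal{E}),\delta_a)|_{U_j}\longrightarrow(E^{\bullet}_j,a^{0,1}_j),\qquad
f_{\mathcal{F}}\colon(\mathcal{S}^{\bullet}(\mathcal{F}),\delta_b)|_{U_j}\longrightarrow(F^{\bullet}_j,b^{0,1}_j),
$$
given by projection onto the \v{C}ech degree $0$ part followed by projection onto the $j$-th factor. The central point is then to check that the square
$$
\begin{CD}
(\mathcal{S}^{\bullet}(\mathcal{E}),\delta_a)|_{U_j} @>{\mathcal{S}(\phi)|_{U_j}}>> (\mathcal{S}^{\bullet}(\mathcal{F}),\delta_b)|_{U_j} \\
@V{f_{\mathcal{E}}}VV @VV{f_{\mathcal{F}}}V \\
(E^{\bullet}_j,a^{0,1}_j) @>{\phi^{0,0}_j}>> (F^{\bullet}_j,b^{0,1}_j)
\end{CD}
$$
commutes on the nose. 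This follows directly from the composition formula \eqref{equation: composition of maps between graded sheaves}: the $(0,n)$ component of $\phi\cdot c$ over $U_{i_0}$ equals $\phi^{0,0}_{i_0}\,c^{0,n}_{i_0}$, since $p+r=0$ forces $p=r=0$ and the sign $(-1)^{qr}$ is then trivial; taking $i_0=j$ gives $f_{\mathcal{F}}\circ\mathcal{S}(\phi)=\phi^{0,0}_j\circ f_{\mathcal{E}}$. All four arrows are chain maps ($\mathcal{S}$ is a dg-functor and $\phi$ is closed of degree zero, $\phi^{0,0}_j$ is a chain map as recorded in Definition~\ref{defi: quasi-isomorphism in Tw}, and $f_{\mathcal{E}},f_{\mathcal{F}}$ are as in the cited proposition), and the two vertical ones are quasi-isomorphisms because they are homotopy equivalences.

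Finally I would apply the two-out-of-three property to this square: $\mathcal{S}(\phi)|_{U_j}$ is a quasi-isomorphism if and only if $\phi^{0,0}_j$ is. Letting $j$ range over the index set and feeding this back into the locality step, $\mathcal{S}(\phi)$ is a quasi-isomorphism if and only if $\phi^{0,0}_j$ is a quasi-isomorphism on $U_j$ for every $j$, which is exactly the defining condition for $\phi$ to be a weak equivalence (Definition~\ref{defi: quasi-isomorphism in Tw}). The only step that is not completely formal is the commutativity of the square, i.e.\ the compatibility of the projection $f$ with the full, non-diagonal action of $\phi$ on $\mathcal{S}^{\bullet}$; but as indicated this is immediate from the degree bookkeeping in \eqref{equation: composition of maps between graded sheaves} and involves no real computation, so I do not expect any genuine obstacle.
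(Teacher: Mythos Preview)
Your proof is correct and follows essentially the same approach as the paper: both reduce to the commutative square relating $\mathcal{S}(\phi)|_{U_j}$ and $\phi^{0,0}_j$ via the homotopy equivalences $f$ from Proposition~\ref{prop: associated sheaf locall isom to original twisted complex}, and conclude by two-out-of-three. The paper simply asserts that $f$ is ``functorial'' to obtain the square, whereas you spell out the commutativity via the composition formula \eqref{equation: composition of maps between graded sheaves} and make the locality step explicit; these are welcome clarifications but not a different argument.
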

\begin{proof}
First we fix a $U_j$. It is obvious that the quasi-isomorphism
$$
f: \mathcal{S}^{\bullet}(\mathcal{E})|_{U_j} \overset{\sim}{\to} E^{\bullet}_{j}
$$
is functorial hence we have the following commutative diagram
$$
\begin{CD}
\mathcal{S}^{\bullet}(\mathcal{E})|_{U_j} @>\mathcal{S}(\phi)|_{U_j}>> \mathcal{S}^{\bullet}(\mathcal{F})|_{U_j}\\
@V\sim VV @VV \sim V\\
E^{\bullet}_{j} @>\phi^{0,0}_j >> F^{\bullet}_{j}.
\end{CD}
$$
Now the claim is obviously true.
\qed \end{proof}

\subsection{The essential surjectivity of $\mathcal{S}$}\label{subsection: essentially surjective}

\subsubsection{The twisting functor $\mathcal{T}$ and some generalities}

Remark \ref{remark: sheafification is both quasi-coherent and perfect} after Corollary \ref{coro: perfectness of sheafification} ensures that we have the dg-functor
$$
\mathcal{S}: \text{Tw}_{\text{perf}}(X)\to \text{Qcoh}_{\text{perf}}(X)
$$
which  induces an exact functor
$$
\mathcal{S}: Ho\text{Tw}_{\text{perf}}(X)\to D_{\text{perf}}(\text{Qcoh}(X)).
$$

In this subsection we will show that this  functor is essentially surjective under some mild condition. Moreover we will show that the functor
$$
\mathcal{S}: Ho\text{Tw}_{\text{perf}}(X)\to D_{\text{perf}}(X)
$$
is essentially surjective under some additional  conditions.

First we define a natural dg-functor from Sh$(X)$ to Tw$(X)$ as follows
\begin{definition}\label{defi: twisted functor}
Let $(S^{\bullet},d)$ be a complex of  $\mathcal{O}_X$-modules. We define its associated twisted complex, $\mathcal{T}(S)$,  by restricting to the $U_i$'s. In more details let $(E^{\bullet},a)=\mathcal{T}(S)$ then
$$
E^{n}_i=S^n|_{U_i}
$$
and
$$
a^{0,1}_i=d|_{U_i},~a^{1,0}_{ij}=id \text{ and } a^{k,1-k}=0 \text{ for }k\geq 2.
$$
The $\mathcal{T}$ of morphisms is defined in a similar way.

We call the dg-functor $\mathcal{T}: \text{Sh}(X)\to \text{Tw}(X)$ the \emph{twisting functor}.
\end{definition}

We would like to find the relation between the dg-functors $\mathcal{S}$ and $\mathcal{T}$. First we have the following result.

\begin{proposition}\label{prop: ST and id are quasi-isomorphic}
Let $P=(S^{\bullet},d)$ be a complex of   $\mathcal{O}_X$-modules,  the natural map
$$
\tau_P: P\to \mathcal{S}\mathcal{T}(P)
$$
is a quasi-isomorphism. Hence $\tau: id\to \mathcal{S}\mathcal{T}$ gives a natural isomorphism between functors (on the level of derived categories).
\end{proposition}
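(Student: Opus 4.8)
The plan is to deduce everything from Proposition~\ref{prop: associated sheaf locall isom to original twisted complex}, which already contains the only substantial computation. First I would spell out both sides. By Definition~\ref{defi: twisted functor} the twisted complex $\mathcal{T}(P)=(E^{\bullet}_i,a)$ has $E^{\bullet}_i=S^{\bullet}|_{U_i}$, $a^{0,1}_i=d|_{U_i}$, $a^{1,0}_{ij}=\mathrm{id}$ and $a^{k,1-k}=0$ for $k\ge 2$. Hence by Definition~\ref{defi: sheafification functor} and \eqref{equation: sheafification functor: alternative} the complex $\mathcal{S}\mathcal{T}(P)$ is, degreewise, $\mathcal{S}\mathcal{T}(P)^n=\prod_{p+q=n}\prod_{i_0\ldots i_p}S^q|_{U_{i_0\ldots i_p}}$, with differential $\delta_a=\delta+a^{1,0}+a^{0,1}$; a direct inspection of the formulas \eqref{equation: delta on sheaves} and \eqref{equation: action of maps on sheaves} shows that $\delta+a^{1,0}$ is precisely the ordinary (unmodified) \v{C}ech differential, while $a^{0,1}$ contributes $\pm d$ with the usual total-complex signs. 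In other words $\mathcal{S}\mathcal{T}(P)$ is the product total complex of the sheafified \v{C}ech bicomplex of $S^{\bullet}$ for the cover $\mathcal{U}$, and $\tau_P\colon S^n\to\mathcal{S}\mathcal{T}(P)^n$ is the augmentation $s\mapsto(s|_{U_{i_0}})_{i_0}$ into the \v{C}ech-degree-zero part. That $\tau_P$ is closed and natural in $P$ is then a one-line verification with the sign conventions \eqref{equation: delta on maps}, \eqref{equation: delta on sheaves} and \eqref{equation: action of maps on sheaves}.

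Next I would fix a member $U_j$ of the cover and restrict. Applying Proposition~\ref{prop: associated sheaf locall isom to original twisted complex} to the twisted complex $\mathcal{E}=\mathcal{T}(P)$, for which $(E^{\bullet}_j,a^{0,1}_j)=P|_{U_j}$, produces a homotopy equivalence $f\colon\mathcal{S}\mathcal{T}(P)|_{U_j}\overset{\sim}{\to}P|_{U_j}$, namely the projection onto the $(0,\bullet)$-part followed by the projection onto the $j$-th index. The remaining point is purely formal: composing $\tau_P|_{U_j}$ (which places $s$ into the $(0,\bullet)$-part via restrictions) with $f$ (which reads off the $j$-th slot) returns $s$ on opens $V\subseteq U_j$, so $f\circ(\tau_P|_{U_j})=\mathrm{id}_{P|_{U_j}}$. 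In fact, since for $\mathcal{T}(P)$ all higher $a^{k,1-k}$ vanish and $a^{1,0}_{i_0 j}=\mathrm{id}$, one checks directly that $\tau_P|_{U_j}$ is exactly the map $g$ constructed in the proof of Proposition~\ref{prop: associated sheaf locall isom to original twisted complex}, so it is itself a homotopy equivalence; either way $\tau_P|_{U_j}$ is a quasi-isomorphism (by two-out-of-three from $f$ and $f\circ\tau_P|_{U_j}$, if one prefers to argue that way).

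Finally, since quasi-isomorphism of complexes of sheaves is a local condition and the $U_j$ cover $X$, it follows that $\tau_P$ is a quasi-isomorphism; the naturality established in the first step then upgrades $\tau$ to a natural isomorphism $\mathrm{id}\overset{\sim}{\to}\mathcal{S}\mathcal{T}$ of the induced endofunctors on the derived category. I expect the only mildly delicate part to be the sign and indexing bookkeeping that identifies $\delta_a$ on $\mathcal{S}\mathcal{T}(P)$ with the total \v{C}ech differential and shows $\tau_P$ is closed and natural; the genuine content — exactness of the sheafified \v{C}ech resolution — is entirely absorbed into Proposition~\ref{prop: associated sheaf locall isom to original twisted complex}. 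Alternatively one could give a self-contained argument: identify $\mathcal{S}\mathcal{T}(P)$ with the sheafified \v{C}ech total complex as above, observe that for each fixed $q$ the column $S^q\to\prod_p\prod_{i_0\ldots i_p}S^q|_{U_{i_0\ldots i_p}}$ is an exact complex of sheaves via the local contracting homotopy ``insert an index $i_x$ with $x\in U_{i_x}$'', and then conclude by the standard filtration-by-\v{C}ech-degree spectral sequence; but routing through Proposition~\ref{prop: associated sheaf locall isom to original twisted complex} is considerably shorter.
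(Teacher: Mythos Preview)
Your argument is correct. Your main route differs from the paper's: the paper identifies $\mathcal{S}\mathcal{T}(P)$ with the total complex of the sheafified \v{C}ech bicomplex of $S^{\bullet}$, observes that $\tau_P$ is the augmentation into the $0$-th row, and concludes immediately from the classical fact that the sheafified \v{C}ech resolution (of any sheaf, without taking global sections) is exact---exactly what you list as your ``alternative''. Your primary approach instead specializes Proposition~\ref{prop: associated sheaf locall isom to original twisted complex} to $\mathcal{E}=\mathcal{T}(P)$ and checks that $\tau_P|_{U_j}$ coincides with the map $g$ built there (which it does, since for $\mathcal{T}(P)$ the only nonzero contribution $\epsilon^p\circ a^{p+1,-p}_{i_0\ldots i_p j}$ is $a^{1,0}_{i_0 j}=\mathrm{id}$ at $p=0$), so $\tau_P|_{U_j}$ is a homotopy equivalence for each $j$ and hence $\tau_P$ is a quasi-isomorphism. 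What your route buys is economy---the contracting homotopy $h$ in the proof of Proposition~\ref{prop: associated sheaf locall isom to original twisted complex} \emph{is} the usual ``insert the index $j$'' \v{C}ech homotopy, so you are reusing work already done rather than repeating it; what the paper's direct route buys is that it makes the statement logically independent of the more elaborate Proposition~\ref{prop: associated sheaf locall isom to original twisted complex} and exposes the underlying classical fact. Either way the content is the same.
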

\begin{proof}
By definition   $\mathcal{S}\mathcal{T}(P)$ is the total complex of the double complex associated to $\mathcal{T}(P)$ and $\tau_P$ is given by the embedding into the $0$-th row of that double complex. Hence it is sufficient to prove that the \v{C}ech direction of the double complex is acyclic. But we know that the \v{C}ech complex (without taking global sections) is always acyclic.
\qed \end{proof}

On the other hand let $\mathcal{E}=(E,a)$ be a twisted complex, we would like to define a  closed degree $0$ morphism
$$
\gamma_{\mathcal{E}}: \mathcal{T}\mathcal{S}(\mathcal{E})\to \mathcal{E}.
$$
Actually for each $U_{i_0\ldots i_p}$ we need to construct a map
$$
(\gamma_{\mathcal{E}})^{p,-p}_{i_0\ldots i_p}: \mathcal{S}^{\bullet}(\mathcal{E})|_{U_{i_p}}\to E^{\bullet-p}_{i_0}.
$$
Recall that $\mathcal{S}^{\bullet}(\mathcal{E})=\prod_{j_0\ldots j_k} E^{\bullet-k}_{j_0}|_{U_{j_0\ldots j_k}}$, then $(\gamma_{\mathcal{E}})^{p,-p}_{i_0\ldots i_p}$ is defined to be projecting to the component $i_0\ldots i_p$. In particular $(\gamma_{\mathcal{E}})^{0,0}_j$ is the map $f$ in Proposition \ref{prop: associated sheaf locall isom to original twisted complex}. It is easy to verify that $\gamma_{\mathcal{E}}$ commutes with the differentials.

\begin{proposition}\label{prop: TS and id are isomorphic}
The map
$$
\gamma_{\mathcal{E}}: \mathcal{T}\mathcal{S}(\mathcal{E})\to \mathcal{E}
$$ is a weak equivalence.
\end{proposition}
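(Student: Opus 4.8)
The plan is to show that $\gamma_{\mathcal{E}}$ is a closed degree-zero morphism whose $(0,0)$-component is a quasi-isomorphism on each $U_i$, which is exactly the definition of a weak equivalence (Definition \ref{defi: quasi-isomorphism in Tw}). The first two facts -- that $\gamma_{\mathcal{E}}$ is closed and of degree zero -- were already asserted in the paragraph preceding the statement (it is the map obtained by projecting $\mathcal{S}^{\bullet}(\mathcal{E})|_{U_{i_p}}$ onto the $i_0\ldots i_p$ component, with a routine check that it commutes with the differentials), so the real content is the identification of its $(0,0)$-component. By construction $(\gamma_{\mathcal{E}})^{0,0}_j$ is precisely the map
$$
f\colon (\mathcal{S}^{\bullet}(\mathcal{E}),\delta_a)|_{U_j}\to (E^{\bullet}_j,a^{0,1}_j)
$$
of Proposition \ref{prop: associated sheaf locall isom to original twisted complex}. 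Therefore I would simply invoke that proposition: $f$ is a chain homotopy equivalence (with explicit homotopy inverse $g$ and homotopy $h$ constructed there), hence in particular a quasi-isomorphism of complexes of $\mathcal{O}_X$-modules on $U_j$, and this holds for every member $U_j$ of the cover.

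Concretely the steps are: (1) recall that $\gamma_{\mathcal{E}}$ is closed of degree zero by the discussion above; (2) observe that its restriction to each $U_j$, in the $(0,0)$-component, is the projection map $f$ of Proposition \ref{prop: associated sheaf locall isom to original twisted complex}; (3) conclude from that proposition that $f$ is a quasi-isomorphism on each $U_j$; (4) assemble (1) and (3) to conclude that $\gamma_{\mathcal{E}}$ satisfies both conditions of Definition \ref{defi: quasi-isomorphism in Tw} and is therefore a weak equivalence. One should be slightly careful that the $(0,0)$-component of $\gamma_{\mathcal{E}}$ really is $f$ and not $f$ composed with some extra projection or sign: unwinding the definition $\mathcal{S}^{\bullet}(\mathcal{E})=\prod_{j_0\ldots j_k}E^{\bullet-k}_{j_0}|_{U_{j_0\ldots j_k}}$, the $(0,0)$-part picks out exactly the $k=0$, $j_0=j$ summand, which is $f$ on the nose.

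There is essentially no obstacle here: all the hard work was already done in Proposition \ref{prop: associated sheaf locall isom to original twisted complex}, and this statement is a direct corollary of it together with the definition of weak equivalence. The only point requiring (minor) attention is the verification that $\gamma_{\mathcal{E}}$ is a genuine chain map of twisted complexes, i.e.\ that it is closed of total degree zero as a morphism in $\text{Tw}(X)$ -- but this is the "easy to verify" claim already recorded in the text, following from the compatibility of the projection maps with $\delta_a$ on $\mathcal{S}(\mathcal{E})$ and with the $\delta_a$-type differential $d\phi=\delta\phi+a\cdot\phi-(-1)^{|\phi|}\phi\cdot a$ on morphisms into $\mathcal{E}$, plus the Maurer--Cartan equation for $a$. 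So the proof is short: state that the two defining conditions hold, citing the preceding discussion for the first and Proposition \ref{prop: associated sheaf locall isom to original twisted complex} for the second.
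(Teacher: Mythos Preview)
Your proposal is correct and matches the paper's approach exactly: the paper's proof is the single sentence ``This is a direct corollary of Proposition \ref{prop: associated sheaf locall isom to original twisted complex},'' and you have simply unpacked that corollary by identifying $(\gamma_{\mathcal{E}})^{0,0}_j$ with the map $f$ from that proposition. Your additional remarks about closedness and degree are already handled in the text preceding the statement, so there is nothing to add.
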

\begin{proof}
This is a direct corollary of Proposition \ref{prop: associated sheaf locall isom to original twisted complex}.
\qed \end{proof}

\begin{remark}\label{remark: TS of twisted perfect complexes are quasi-coherent}
If $\mathcal{E}$ is a twisted perfect complex, then  $\mathcal{T}\mathcal{S}(\mathcal{E})$ is not necessarily a twisted perfect complex. Nevertheless it is easy to see that $\mathcal{T}\mathcal{S}(\mathcal{E})$ consists of complexes of quasi-coherent sheaves on each $U_i$.
\end{remark}

\begin{proposition}\label{prop: adjunction of T and S}
Let $\mathcal{E}=(E,a)$ be a twisted complex, the composition
$$
\mathcal{S}(\mathcal{E})\overset{\tau_{\mathcal{S}(\mathcal{E})}}{\longrightarrow}\mathcal{S}\mathcal{T}\mathcal{S}(\mathcal{E})\overset{\mathcal{S}(\gamma_{\mathcal{E}})}{\longrightarrow} \mathcal{S}(\mathcal{E})
$$
equals to the identity map on $\mathcal{S}(\mathcal{E})$.
\end{proposition}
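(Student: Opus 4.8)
The plan is to verify the equality directly on sections, simply unwinding the three maps; no homological input beyond the explicit descriptions already at hand is required. Fix an open set $V\subseteq X$ and a section $c\in\mathcal{S}^n(\mathcal{E})(V)=\prod_{r+s=n}C^r(\mathcal{U},E^s;V)$, so $c$ is a tuple of sections $c^{r,s}_{j_0\ldots j_r}\in E^s_{j_0}(U_{j_0\ldots j_r}\cap V)$. Since $\tau$, $\gamma_{\mathcal{E}}$ and the functor $\mathcal{S}$ on morphisms are all defined open set by open set and commute with restriction, it is enough to check that the composite carries each such $c$ to itself.

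First I would write down $\tau_{\mathcal{S}(\mathcal{E})}$ explicitly. By (the proof of) Proposition \ref{prop: ST and id are quasi-isomorphic}, with $P:=\mathcal{S}(\mathcal{E})$, the map $\tau_P$ is the embedding of $P$ into the \v{C}ech-degree-zero row of the double complex of $\mathcal{T}(P)$; concretely $\tau_P(c)\in\mathcal{S}\mathcal{T}(P)^n(V)=\prod_{p+m=n}C^p(\mathcal{U},P^m;V)$ has
$$
(\tau_P(c))^{0,n}_{i_0}=c|_{U_{i_0}\cap V},
$$
and every component of \v{C}ech degree $\ge1$ is zero. This is the only place the structure of $\tau$ enters.

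Next I would unwind $\mathcal{S}(\gamma_{\mathcal{E}})$. By the definition of $\mathcal{S}$ on morphisms (Definition \ref{defi: sheafification functor}) it acts on sections as the left module action $\gamma_{\mathcal{E}}\cdot(-)$ of Equation \eqref{equation: action of maps on sheaves}, where $(\gamma_{\mathcal{E}})^{p,-p}_{k_0\ldots k_p}\colon\mathcal{S}^{\bullet}(\mathcal{E})|_{U_{k_p}}\to E^{\bullet-p}_{k_0}$ is projection onto the $k_0\ldots k_p$ component of $\mathcal{S}^{\bullet}(\mathcal{E})=\prod_{j_0\ldots j_r}E^{\bullet-r}_{j_0}|_{U_{j_0\ldots j_r}}$. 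Plugging $\tau_P(c)$ into the module action gives
$$
(\gamma_{\mathcal{E}}\cdot\tau_P(c))^{P,n-P}_{k_0\ldots k_P}=\sum_{p+r=P}(-1)^{qr}\,(\gamma_{\mathcal{E}})^{p,-p}_{k_0\ldots k_p}\!\left((\tau_P(c))^{r,n-r}_{k_p\ldots k_{p+r}}\right),\quad q=-p,
$$
and the crucial point is that $\tau_P(c)$ is concentrated in \v{C}ech degree $0$, so only the summand with $r=0$ survives; for $r=0$ the sign $(-1)^{qr}$ is $1$, and the surviving term equals $(\gamma_{\mathcal{E}})^{P,-P}_{k_0\ldots k_P}(c|_{U_{k_P}\cap V})$. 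Finally, this projection reads off the $k_0\ldots k_P$ component of the section $c|_{U_{k_P}\cap V}$ of $\mathcal{S}^n(\mathcal{E})(U_{k_P}\cap V)$, which by the very definition of restriction is $c^{P,n-P}_{k_0\ldots k_P}$ restricted from $U_{k_0\ldots k_P}\cap V$ to $U_{k_0\ldots k_P}\cap U_{k_P}\cap V=U_{k_0\ldots k_P}\cap V$, i.e. $c^{P,n-P}_{k_0\ldots k_P}$ itself. Hence $\mathcal{S}(\gamma_{\mathcal{E}})\circ\tau_{\mathcal{S}(\mathcal{E})}(c)=c$.

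The only thing one has to be careful about is the bookkeeping of two layers of \v{C}ech indices — the index already carried by a section of $\mathcal{S}(\mathcal{E})$ versus the new one introduced by $\mathcal{T}$ and then $\mathcal{S}$ — together with a check that the sign conventions of Equations \eqref{equation: composition of maps between graded sheaves}–\eqref{equation: action of maps on sheaves} contribute nothing; they do not, precisely because the surviving term sits in \v{C}ech degree $0$ on the $\tau$ side. I do not expect any genuine obstacle: the statement is a strict triangle identity for the unit $\tau$ and counit $\gamma$, and the computation produces it on the nose rather than merely up to homotopy.
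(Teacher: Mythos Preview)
Your proof is correct and follows essentially the same approach as the paper's own proof: both arguments simply untangle the two layers of \v{C}ech indices, observe that $\tau_{\mathcal{S}(\mathcal{E})}$ lands in the $p=0$ row, and then check that $\mathcal{S}(\gamma_{\mathcal{E}})$ projects that row back to the original section. Your version is a bit more explicit about the sign bookkeeping from Equation~\eqref{equation: action of maps on sheaves}, but the substance is identical.
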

\begin{proof}
The proof is just an untangling of definitions. By definition we know that
\begin{equation*}
\begin{split}
[\mathcal{S}\mathcal{T}\mathcal{S}(\mathcal{E})]^n&=\prod_{p+q=n}\prod_{i_0\ldots i_p}[(\mathcal{T}\mathcal{S}(\mathcal{E}))^q_{i_0}]|_{U_{i_0\ldots i_p}}\\
&=\prod_{p+q=n}\prod_{i_0\ldots i_p}(\prod_{s+t=q}\prod_{a_0\ldots a_s}(E^{t}_{a_0}|_{U_{a_0\ldots a_s}})|_{U_{i_0\ldots i_p}}
\end{split}
\end{equation*}
The map $\tau_{\mathcal{S}(\mathcal{E})}$ is the embedding into the $0$-th row hence it maps $\prod_{s+t=n}\prod_{a_0\ldots a_s}E^{t}_{a_0}|_{U_{a_0\ldots a_s}}$ to the $p=0, q=n$ component of the above equation, i.e. $\tau_{\mathcal{S}(\mathcal{E})}$ maps $\prod_{s+t=n}\prod_{a_0\ldots a_s}E^{t}_{a_0}|_{U_{a_0\ldots a_s}}$ to
$$
\prod_{i_0}(\prod_{s+t=n}\prod_{a_0\ldots a_s}(E^{t}_{a_0}|_{U_{a_0\ldots a_s}})|_{U_{i_0}}.
$$
Then compose with $\mathcal{S}(\gamma_{\mathcal{E}})$ and we get the identity map on $\prod_{s+t=q}E^{t}_{a_0}|_{U_{a_0\ldots a_s}}$.
\qed \end{proof}

\subsubsection{The twisted resolution and the essential surjectivity on quasi-coherent sheaves}

Let $P=(S^{\bullet},d)$ be a perfect  complex. There is no guarantee that its associated twisted complex $\mathcal{T}(P)$ is a twisted perfect  complex on the nose, even if we assume $P$ consists of quasi-coherent sheaves. Nevertheless we have a quasi-isomorphic result. First we need to introduce the following definitions.

\begin{definition}\label{defi: good space}
A locally ringed space $(U,\mathcal{O}_U)$ is called \emph{p-good} if it satisfies the following two conditions
\begin{enumerate}[1.]
\item For every perfect complex $\mathcal{P}^{\bullet}$ on $U$ which consists of quasi-coherent sheaves, there exists a strictly perfect complex $\mathcal{E}^{\bullet}$ on $U$ together with a quasi-isomorphism $u: \mathcal{E}^{\bullet}\overset{\sim}{\to}\mathcal{P}^{\bullet}$.
\item The higher cohomologies of quasi-coherent sheaves vanish, i.e. $H^k(U,\mathcal{F})=0$ for any quasi-coherent sheaf $\mathcal{F}$ on $U$ and any $k\geq 1$.
\end{enumerate}\end{definition}

\begin{remark}
The letter "p" in the term "p-good space' stands for "perfect".
\end{remark}

Then we can define p-good cover of a ringed space.

\begin{definition}[p-good cover]\label{defi: good covers}
Let $(X,\mathcal{O}_X)$ be a locally ringed space, an open cover $\{U_i\}$ of $X$ is called a \emph{p-good cover} if $(U_I,\mathcal{O}_X|_{U_I})$ is a p-good space for   any finite intersection $U_I$ of the open cover.
\end{definition}

\begin{remark}
We introduce p-good covers mainly because we need to fix a cover which works for any complex of quasi-coherent sheaves on $X$. Actually a possible alternative way is to refine the open cover and consider the refinement of twisted complexes and get a direct limit
$$
\underrightarrow{\lim}_{\text{refinement of } \{U_i\}}\text{Tw}(X,\mathcal{O}_X,\{U_i\}).
$$
Nevertheless in this paper we do not take the above approach and just stick to  a fixed p-good cover.
\end{remark}

A lot of "reasonable" ringed spaces have p-good covers. For example we have
\begin{itemize}
\item $(X,\mathcal{O}_X)$ is a separated scheme, then any affine cover is   p-good.
\item $(X,\mathcal{O}_X)$ is a complex manifold with $\mathcal{O}_X$ the sheaf of holomorphic functions. In this case a  Stein cover is   p-good.
\item $(X,\mathcal{O}_X)$ is a paracompact topological space with \emph{soft} structure sheaf $\mathcal{O}_X$. Then any contractible open cover is   p-good.
\end{itemize}
Further discussions of p-good covers   will be given in Appendix \ref{appendix: good covers}.

With the notion of p-good covers we can state and prove the following important proposition.

\begin{proposition}\label{prop: existence of resolution}[Twisted resolution, see \cite{o1985grothendieck} Proposition 1.2.3]
Assume the cover $\{U_i\}$ is p-good. Let $P=(S^{\bullet},d_S)$ be a perfect  complex which consists of quasi-coherent modules, then $\mathcal{T}(P)$ is weakly equivalent to a twisted perfect complex. More precisely there exists a twisted perfect complex  $\mathcal{E}$ together with a weak equivalence (Definition \ref{defi: quasi-isomorphism in Tw})
$$
\phi: \mathcal{E}\overset{\sim}{\to} \mathcal{T}(P).
$$
\end{proposition}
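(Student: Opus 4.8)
The plan is to construct the twisted perfect complex $\mathcal{E}=(E^\bullet_i,a)$ and the weak equivalence $\phi=\sum_{k\ge 0}\phi^{k,-k}\colon\mathcal{E}\to\mathcal{T}(P)$ simultaneously, by induction on the \v{C}ech degree, following the mechanism of the proof of Proposition \ref{prop: homotopy inverse for twisted complexes} (which in turn follows \cite{o1985grothendieck}, Proposition 1.2.3). First I would record the local data. Write $\mathcal{T}(P)=(F^\bullet_i,b)$, so $F^n_i=S^n|_{U_i}$, $b^{0,1}_i=d_S|_{U_i}$, $b^{1,0}_{ij}=\mathrm{id}$ and $b^{k,1-k}=0$ for $k\ge 2$ (Definition \ref{defi: twisted functor}). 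Since the cover is p-good, the first clause of Definition \ref{defi: good space}, applied to the perfect complex of quasi-coherent sheaves $S^\bullet|_{U_i}$, provides a strictly perfect complex $\mathcal{E}^\bullet_i$ on $U_i$ and a quasi-isomorphism $u_i\colon\mathcal{E}^\bullet_i\overset{\sim}{\to}S^\bullet|_{U_i}$. I then set $E^\bullet_i:=\mathcal{E}^\bullet_i$, let $a^{0,1}_i$ be its differential, and put $\phi^{0,0}_i:=u_i$; this gives the \v{C}ech-degree-zero parts of the Maurer--Cartan equation and of the closedness equation $\delta\phi+b\cdot\phi-\phi\cdot a=0$ for free.

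Now for the induction: suppose $a^{j,1-j}$ and $\phi^{j,-j}$ have been defined for $j<k$ so that Equation \eqref{equation: MC for twisted complex explicit} and the closedness equation hold in all \v{C}ech degrees below $k$; I want $a^{k,1-k}_{i_0\ldots i_k}$ and $\phi^{k,-k}_{i_0\ldots i_k}$ over each $U_{i_0\ldots i_k}$. The tool is the mapping cone of $\phi$ (Definition \ref{defi: mapping cone}): its degree-zero piece is $G^\bullet_{i_0}=E^{\bullet+1}_{i_0}\oplus F^\bullet_{i_0}$ with the block differential of \eqref{equation: diff in mapping cone} determined by $a^{0,1}_{i_0}$, $b^{0,1}_{i_0}$ and $\phi^{0,0}_{i_0}$, and since $\phi^{0,0}_{i_0}=u_{i_0}$ is a quasi-isomorphism this is an acyclic complex of quasi-coherent sheaves. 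As $E^\bullet_{i_k}$ is a bounded complex of finitely generated locally free sheaves and $U_{i_0\ldots i_k}$ is p-good, Lemma \ref{lemma: acyclic of complex on good covers} shows $\mathrm{Hom}^\bullet(E^\bullet_{i_k},G^\bullet_{i_0})$ is acyclic over $U_{i_0\ldots i_k}$. Because the cone differential is lower block triangular and its $F$-to-$F$ entry $b^{k,1-k}$ is already known, the unknown pair $(a^{k,1-k}_{i_0\ldots i_k},\phi^{k,-k}_{i_0\ldots i_k})$ can be viewed as a single section of this Hom-complex, and the relevant instances of the Maurer--Cartan and closedness equations say exactly that its Hom-differential equals a section $\Omega_{i_0\ldots i_k}$ assembled from the lower-order data alone. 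Hence the pair exists as soon as $\Omega_{i_0\ldots i_k}$ is a cocycle.

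For the base case $k=1$ I would treat the diagonal and off-diagonal indices separately. Over $U_{ii}$, using $b^{1,0}_{ii}=\mathrm{id}$, the equation reduces to ``$u_i\circ a^{1,0}_{ii}$ is chain homotopic to $u_i$'', which I solve by taking $a^{1,0}_{ii}=\mathrm{id}$ and $\phi^{1,-1}_{ii}=0$ (the acyclicity only guarantees existence of a solution, so I am free to pick this one). Over $U_{ij}$ with $i\ne j$ the equation is precisely the factorization of the chain map $u_j$ through the quasi-isomorphism $u_i$ up to homotopy furnished by Lemma \ref{lemma: homotopy inverse on good covers}. Once the induction is complete, $\mathcal{E}=(E^\bullet_i,a)$ has strictly perfect components, satisfies the Maurer--Cartan equation, and has $a^{1,0}_{ii}=\mathrm{id}$ chain homotopic to the identity, so it is a bona fide twisted perfect complex (Definition \ref{defi: twisted complex}); and $\phi\colon\mathcal{E}\to\mathcal{T}(P)$ is closed of degree zero with $\phi^{0,0}_i=u_i$ a quasi-isomorphism for each $i$, hence a weak equivalence (Definition \ref{defi: quasi-isomorphism in Tw}), as required.

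The hard part is the cocycle verification in the inductive step, i.e.\ checking that $\Omega_{i_0\ldots i_k}$ is closed in $\mathrm{Hom}^\bullet(E^\bullet_{i_k},G^\bullet_{i_0})$. This is a sign-bookkeeping argument using the Leibniz rule (Proposition \ref{prop: Leibniz for Cech differential}), associativity of the composition \eqref{equation: composition of maps between graded sheaves}, and the induction hypothesis, and it is formally identical to the corresponding step in the proof of Proposition \ref{prop: homotopy inverse for twisted complexes}; I would adapt that computation rather than redo it in isolation. Everything else in the argument is formal.
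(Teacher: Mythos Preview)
Your proposal is correct and is essentially the same argument as the paper's: both start from local strictly perfect resolutions $\phi^{0,0}_i\colon E^\bullet_i\overset{\sim}{\to}S^\bullet|_{U_i}$, package the unknowns $(a^{k,1-k},\phi^{k,-k})$ as sections of $\mathrm{Hom}^\bullet(E^\bullet_{i_k},\text{Cone}(\phi^{0,0}_{i_0}))$, handle $k=1$ via Lemma \ref{lemma: homotopy inverse on good covers}, and run the induction using the acyclicity of this Hom-complex from Lemma \ref{lemma: acyclic of complex on good covers}. Your explicit choice $a^{1,0}_{ii}=\mathrm{id}$, $\phi^{1,-1}_{ii}=0$ on the diagonal is a mild improvement in presentation, since the paper applies the lifting lemma uniformly for all $i,j$ and leaves the non-degeneracy condition implicit.
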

\begin{proof}
This proposition and its proof are essentially the same as Proposition 1.2.3 in \cite{o1985grothendieck}. For completeness we give the proof here in our terminology.

First we know that for each perfect complex $P=(S^{\bullet},d_S)$, there exists a strictly perfect complex  $E^{\bullet}_i$ on each $U_i$ together with a quasi-isomorphism
$$
\phi^{0,0}_i: E^{\bullet}_i\overset{\sim}{\to} S^{\bullet}|_{U_i}.
$$

Let us denote the differential of the chain complex $E^{\bullet}_i$ by $a^{0,1}_i$. Now we need to do the following two constructions:
 \begin{enumerate}
 \item Find maps $a^{k,1-k}$'s for $k \geq 1$ such that they and the $a^{0,1}_i$'s together make $E^{\bullet}_i$ a twisted complex.
 \item Extend the map $\phi^{0,0}_i$'s to get a morphism $(E^{\bullet},a)\to \mathcal{T}(P)$ in Tw$(X)$.
 \end{enumerate}
Actually we can construct the two kinds of maps simultaneously. Let $L^{\bullet}_i$ be the mapping cone of $\phi^{0,0}_i$ (So far $L^{\bullet}_i$  is not the mapping cone of any twisted complexes), which is a complex of (not necessarily locally free) sheaves on each open cover $U_i$ and we denote its differential by $A^{0,1}_i$. In fact we have
$$
L^{n}_i=\begin{matrix}E^{n+1}_i\\ \oplus \\ S^n_i\end{matrix}
$$
and
$$
A^{0,1}_i=\begin{pmatrix}-a^{0,1}_i&0\\
\phi^{0,0}_i&d_S|_{U_i}\end{pmatrix}
$$
We want to construct $A^{k,1-k}$ in $C^{k}(\mathcal{U},\text{Hom}^{1-k}(L,L))$ which make $L$ into a twisted complex. Moreover, we want $(L,A)$ to be the mapping cone of a closed degree zero morphism $\phi: \mathcal{E}\to \mathcal{T}(P)$ which extends the $\phi^{0,0}_i$. More precisely, we have the following two requirements on $A^{k,1-k}$:
\begin{enumerate}
\item $A$ satisfies the Maurer-Cartan equation
$$
\delta A+A\cdot A=0.
$$
\item We have
$$
A^{0,1}_i=\begin{pmatrix}-a^{0,1}_i&0\\
\phi^{0,0}_i&d_S|_{U_i}\end{pmatrix}, ~A^{1,0}_{ij}=\begin{pmatrix}*&0\\
*&id|_{U_{ij}}\end{pmatrix}
$$
and for $k\geq 2$, $A^{k,1-k}$ is of the form
$$
\begin{pmatrix}*~&0\\
*~&0\end{pmatrix}.
$$
\end{enumerate}

The construction involves  the previous Lemma \ref{lemma: acyclic of complex on good covers} and \ref{lemma: homotopy inverse on good covers}. For convenience we rephrase them here.

\begin{lemma}[Lemma \ref{lemma: acyclic of complex on good covers}]
Let $U$ be a subset of $X$ which satisfies $H^k(U,\mathcal{F})=0$ for any quasi-coherent sheaf $\mathcal{F}$ and any $k\geq 1$. Let $E^{\bullet}$ be a bounded above  complex of finitely generated locally free sheaves on $U$ and $F^{\bullet}$ be an acyclic complex of quasi-coherent modules on $U$, then the Hom complex $\text{Hom}^{\bullet}(E,F)$ is acyclic.
\end{lemma}

\begin{lemma}[Lemma \ref{lemma: homotopy inverse on good covers}]
Let $U$ be a subset of $X$ which satisfies $H^k(U,\mathcal{F})=0$ for any quasi-coherent sheaf $\mathcal{F}$ and any $k\geq 1$.  Suppose we have chain maps $r: E^{\bullet}\to F^{\bullet}$ and $s: G^{\bullet}\to F^{\bullet}$ between complexes of sheaves on $U$, where $E^{\bullet}$ is a bounded above complex of finitely generated locally free sheaves,  and $F^{\bullet}$ and $G^{\bullet}$ are quasi-coherent. Moreover $s$ is a quasi-isomorphism. Then $r$ factors through $s$ up to homotopy, i.e. there exists a chain map $r^{\prime}: E^{\bullet}\to G^{\bullet}$ such that $s\circ r^{\prime}$ is homotopic to $r$.
\end{lemma}

Notice that $S^n$ is quasi-coherent for each $n$, we apply Lemma \ref{lemma: homotopy inverse on good covers} to the case $U=U_{ij}, ~r=\phi^{0,0}_j: E^{\bullet}_j|_{U_{ij}}\to S^{\bullet}|_{U_{ij}}$ and $s=\phi^{0,0}_i: E^{\bullet}_i|_{U_{ij}}\to S^{\bullet}|_{U_{ij}}$ and we obtain a chain map $r^{\prime}: E^{\bullet}_j|_{U_{ij}}\to E^{\bullet}_i|_{U_{ij}}$ together with a homotopy $h: E^{\bullet}_j|_{U_{ij}}\to S^{\bullet-1}|_{U_{ij}} $ such that
$$
\phi^{0,0}_i r^{\prime}-\phi^{0,0}_j=d_S h+h a^{0,1}_j.
$$
Hence we get
$$
a^{1,0}_{ij}=r^{\prime} \text{ and } \phi^{1,-1}_{ij}=h.
$$
Moreover let
$$
A^{1,0}_{ij}=\begin{pmatrix}a^{1,0}_{ij}&0\\
-\phi^{1,-1}_{ij}&id|_{U_{ij}}\end{pmatrix}.
$$
It is clear that $A^{1,0}$ satisfies
$$
A^{1,0}\cdot A^{0,1}+A^{0,1}\cdot A^{1,0}=0.
$$

The $A^{k,1-k}$ for $k\geq 2$ are constructed by induction: Let $D$ denote the differential on $\text{Hom}^{\bullet}(L^{\bullet}_{i_k},L^{\bullet}_{i_0})$. We need to find $A^{k,1-k}_{i_0\ldots i_k}$ on $U_{i_0\ldots i_k}$ satisfying
\begin{enumerate}
\item
\begin{equation}\label{equation: twisted resolution induction step}
(-1)^{k+1} D(A^{k,1-k}_{i_0\ldots i_k})=[\delta A^{k-1,2-k}+\sum_{l=1}^{k-1}A^{l,1-l}\cdot A^{k-l,1+l-k}]_{i_0\ldots i_k}.
\end{equation}
\item $A^{k,1-k}_{i_0\ldots i_k}$ vanishes on the component $S^{\bullet}|_{U_{i_k}}$ of $L^{\bullet}_{i_k}$.
\end{enumerate}
Keep in mind that $L^n_i=E^{n+1}_i\oplus S^n_i$, Condition $2.$ is equivalent to the fact that $A^{k,1-k}_{i_0\ldots i_k}$ lies in the subcomplex $\text{Hom}^{\bullet}(E^{\bullet+1}_{i_k},L^{\bullet}_{i_0})$ of $\text{Hom}^{\bullet}(L^{\bullet}_{i_k},L^{\bullet}_{i_0})$.

It is easy to verify that $[\delta A^{1,0}+A^{1,0}\cdot A^{1,0}]_{ijk}$ lies in $\text{Hom}^{\bullet}(E^{\bullet+1}_{i_k},L^{\bullet}_{i_0})$. Hence by induction we know that the right hand side of Equation \eqref{equation: twisted resolution induction step}, $[\delta A^{k-1,2-k}+\sum_{l=1}^{k-1}A^{l,1-l}\cdot A^{k-l,1+l-k}]_{i_0\ldots i_k}$, lies in $\text{Hom}^{\bullet}(E^{\bullet+1}_{i_k},L^{\bullet}_{i_0})$ for $k\geq 2$. Also by induction we can show that it is a cocycle under the differential $D$. By Lemma \ref{lemma: acyclic of complex on good covers} we know that $\text{Hom}^{\bullet}(E^{\bullet+1}_{i_k},L^{\bullet}_{i_0})$ is acyclic, hence the $A^{k,1-k}_{i_0\ldots i_k}$ in $\text{Hom}^{\bullet}(E^{\bullet+1}_{i_k},L^{\bullet}_{i_0})$ which satisfies Equation \eqref{equation: twisted resolution induction step} exists. By induction we construct the desired $(L,A)$.
\qed \end{proof}

With the help of Proposition \ref{prop: existence of resolution} we can prove the essential surjectivity of the sheafification functor $\mathcal{S}$.
\begin{corollary}\label{coro: essentially surjective}[Essential surjectivity]
If the cover $\{U_i\}$ is p-good, then the sheafification functor
$$
\mathcal{S}: \text{Tw}_{\text{perf}}(X)\to \text{Qcoh}_{\text{perf}}(X)
$$
induces an essentially surjective functor
$$
\mathcal{S}: Ho\text{Tw}_{\text{perf}}(X)\to D_{\text{perf}}(\text{Qcoh}(X)).
$$

\end{corollary}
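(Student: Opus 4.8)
The plan is to combine the twisted resolution of Proposition \ref{prop: existence of resolution} with the comparison morphisms between $\mathcal{S}$ and $\mathcal{T}$; essentially all the substantive work is already contained in Proposition \ref{prop: existence of resolution}, and what remains is a formal chain of quasi-isomorphisms.

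First I would take an arbitrary object of $D_{\text{perf}}(\text{Qcoh}(X))$ and represent it by an honest perfect complex $P=(S^{\bullet},d_S)$ of quasi-coherent sheaves on $X$, which is possible by the very construction of $D_{\text{perf}}(\text{Qcoh}(X))$ out of $\text{Qcoh}_{\text{perf}}(X)$. Applying the twisting functor gives the twisted complex $\mathcal{T}(P)$, which in general is \emph{not} a twisted perfect complex. Here the p-goodness of $\{U_i\}$ enters: by Proposition \ref{prop: existence of resolution} there is a twisted perfect complex $\mathcal{E}$ and a weak equivalence $\phi\colon \mathcal{E}\overset{\sim}{\to}\mathcal{T}(P)$ in $\text{Tw}(X)$.

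Next I would apply the sheafification functor to $\phi$. By the criterion of weak equivalence (Corollary \ref{coro: weakly equi and sheafification}), $\mathcal{S}(\phi)\colon \mathcal{S}(\mathcal{E})\to \mathcal{S}\mathcal{T}(P)$ is a quasi-isomorphism of complexes of $\mathcal{O}_X$-modules, while on the other side Proposition \ref{prop: ST and id are quasi-isomorphic} supplies the quasi-isomorphism $\tau_P\colon P\overset{\sim}{\to}\mathcal{S}\mathcal{T}(P)$. Composing these, $P$ and $\mathcal{S}(\mathcal{E})$ become isomorphic in $D(\text{Qcoh}(X))$. It then only remains to observe that this isomorphism actually lives in $D_{\text{perf}}(\text{Qcoh}(X))$: since $\mathcal{E}$ is a twisted perfect complex, Remark \ref{remark: sheafification of twisted perfect complex is quasi-coherent} shows $\mathcal{S}(\mathcal{E})$ consists of quasi-coherent sheaves and Corollary \ref{coro: perfectness of sheafification} shows it is a perfect complex, so $\mathcal{S}(\mathcal{E})\in \text{Qcoh}_{\text{perf}}(X)$. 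Hence $\mathcal{S}(\mathcal{E})\cong P$ in $D_{\text{perf}}(\text{Qcoh}(X))$, and since the functor in question sends $\mathcal{E}\in Ho\text{Tw}_{\text{perf}}(X)$ to $\mathcal{S}(\mathcal{E})$, the object $P$ lies in its essential image.

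The main obstacle is not in this corollary but in Proposition \ref{prop: existence of resolution}, whose inductive construction of the higher components $A^{k,1-k}$ uses both defining properties of a p-good cover (strictification of perfect complexes on each $U_i$, and vanishing of higher cohomology of quasi-coherent sheaves on the finite intersections, the latter feeding Lemmas \ref{lemma: acyclic of complex on good covers} and \ref{lemma: homotopy inverse on good covers}). The only genuinely new point in the corollary is the bookkeeping that the resulting derived isomorphism is one in $D_{\text{perf}}(\text{Qcoh}(X))$ rather than merely in $D(\text{Qcoh}(X))$ or $D_{\text{Qcoh}}(X)$; given the earlier results this is immediate.
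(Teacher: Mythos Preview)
Your proposal is correct and follows essentially the same route as the paper's own proof: take $P\in\text{Qcoh}_{\text{perf}}(X)$, apply Proposition \ref{prop: existence of resolution} to obtain a weak equivalence $\phi:\mathcal{E}\to\mathcal{T}(P)$ with $\mathcal{E}$ twisted perfect, then invoke Corollary \ref{coro: weakly equi and sheafification} and Proposition \ref{prop: ST and id are quasi-isomorphic} to conclude that $\mathcal{S}(\mathcal{E})\cong P$ in the derived category. Your additional sentence checking that the isomorphism lands in $D_{\text{perf}}(\text{Qcoh}(X))$ via Remark \ref{remark: sheafification of twisted perfect complex is quasi-coherent} and Corollary \ref{coro: perfectness of sheafification} is a small elaboration the paper leaves implicit, but the argument is the same.
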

\begin{proof}
Let $P=(S^{\bullet},d)$ be an object in $\text{Qcoh}_{\text{perf}}(X)$.  Consider the associated twisted complex $\mathcal{T}(P)$, by Proposition \ref{prop: existence of resolution} there exists a twisted complex $\mathcal{E}$ together with a weak equivalence
$$
\phi: \mathcal{E}\overset{\sim}{\to} \mathcal{T}(P).
$$

Then by Corollary \ref{coro: weakly equi and sheafification} we get a quasi-isomorphism
$$
\mathcal{S}(\phi): \mathcal{S}(\mathcal{E})\overset{\sim}{\to}\mathcal{S} \mathcal{T}(P).
$$
On the other hand Proposition \ref{prop: ST and id are quasi-isomorphic} provides us another quasi-isomorphism
$$
\tau_P: P\overset{\sim}{\to} \mathcal{S} \mathcal{T}(P).
$$
Therefore  $\mathcal{S}(\mathcal{E})$ is quasi-isomorphic to $P$, which finishes the proof of Corollary \ref{coro: essentially surjective}.
\qed \end{proof}

\subsubsection{Essential surjectivity on complexes of $\mathcal{O}_X$-modules}\label{subsubsection: Essential surjectivity on complexes of A-modules}
Now we want to show that the following functor
$$
\mathcal{S}: Ho\text{Tw}_{\text{perf}}(X)\to D_{\text{perf}}(X)
$$
is essentially surjective. For this we need the following additional condition on the ringed space $(X,\mathcal{O}_X)$.

\begin{definition}\label{defi: perfect-equivalent condition}
We say a locally ringed space $(X,\mathcal{O}_X)$ satisfies the \emph{perfect-equivalent condition} if the natural map
$$
D_{\text{perf}}(\text{Qcoh}(X))\to D_{\text{perf}}(X)
$$
is an equivalence.
\end{definition}

Further discussions of perfect-equivalent  condition will be given in Appendix \ref{appendix: quasi-coherent, pseudo-coherent complexes and coherent sheaves}. In particular we can show that any quasi-compact and semi-separated scheme or any Noetherian scheme satisfies the perfect-equivalent condition.

With Definition \ref{defi: perfect-equivalent condition} we have the following result.

\begin{corollary}\label{coro: essentially surjective on O-modules}[Essential surjectivity]
If $X$ satisfies the perfect-equivalent  condition and the cover $\{U_i\}$ is p-good, then the functor
$$
\mathcal{S}: Ho\text{Tw}_{\text{perf}}(X)\to D_{\text{perf}}(X)
$$
is essentially surjective.
\end{corollary}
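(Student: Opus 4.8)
The plan is to deduce this immediately from Corollary \ref{coro: essentially surjective} together with the perfect-equivalent condition, once we observe that the relevant square of functors commutes. Recall from Remark \ref{remark: sheafification is both quasi-coherent and perfect} that the sheafification functor factors through $\text{Qcoh}_{\text{perf}}(X)$, so on homotopy and derived categories we have a commutative triangle
$$
\begin{CD}
Ho\text{Tw}_{\text{perf}}(X) @>\mathcal{S}>> D_{\text{perf}}(\text{Qcoh}(X))\\
@| @VVV\\
Ho\text{Tw}_{\text{perf}}(X) @>\mathcal{S}>> D_{\text{perf}}(X),
\end{CD}
$$
where the right vertical arrow is the natural functor of Definition \ref{defi: perfect-equivalent condition}. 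Commutativity here is part of the construction: by Definition \ref{defi: sheafification functor} the complex $\mathcal{S}^n(\mathcal{E})=\prod_{p+q=n}E^q_{i_0}|_{U_{i_0\ldots i_p}}$ is literally the same sheaf of $\mathcal{O}_X$-modules whether we regard its components as quasi-coherent or as general $\mathcal{O}_X$-modules, and the forgetful functor $\text{Qcoh}(X)\to\text{Sh}(X)$ is just the inclusion of a full dg-subcategory.

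Concretely, I would take an arbitrary object $P\in D_{\text{perf}}(X)$. Since $X$ satisfies the perfect-equivalent condition, the functor $D_{\text{perf}}(\text{Qcoh}(X))\to D_{\text{perf}}(X)$ is an equivalence, so there is an object $Q\in D_{\text{perf}}(\text{Qcoh}(X))$ whose image is isomorphic to $P$ in $D_{\text{perf}}(X)$. Representing $Q$ by an actual perfect complex of quasi-coherent sheaves and applying Corollary \ref{coro: essentially surjective} (which uses that $\{U_i\}$ is p-good), we obtain a twisted perfect complex $\mathcal{E}$ with $\mathcal{S}(\mathcal{E})\cong Q$ in $D_{\text{perf}}(\text{Qcoh}(X))$. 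Pushing this isomorphism forward along the right vertical arrow of the triangle and using commutativity yields $\mathcal{S}(\mathcal{E})\cong P$ in $D_{\text{perf}}(X)$, which is precisely essential surjectivity.

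There is no real obstacle in this argument: all of the substance has already been absorbed into Corollary \ref{coro: essentially surjective} (hence into the twisted resolution of Proposition \ref{prop: existence of resolution}) and into the appendix that identifies classes of ringed spaces satisfying the perfect-equivalent condition. The only point warranting an explicit sentence is the commutativity of the triangle above, i.e.\ that composing $\mathcal{S}$ with the forgetful functor $D_{\text{perf}}(\text{Qcoh}(X))\to D_{\text{perf}}(X)$ returns the functor $\mathcal{S}\colon Ho\text{Tw}_{\text{perf}}(X)\to D_{\text{perf}}(X)$; as noted, this is immediate from the explicit description of $\mathcal{S}$. Everything else is bookkeeping with the two cited results.
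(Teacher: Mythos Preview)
Your argument is correct and follows exactly the paper's approach: the paper's own proof is the single line ``It is a direct corollary of Corollary~\ref{coro: essentially surjective} and Definition~\ref{defi: perfect-equivalent condition}.'' You have simply unpacked this, making explicit the commutativity of the relevant triangle, which is indeed immediate from the construction of $\mathcal{S}$.
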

\begin{proof}
It is a direct corollary of Corollary \ref{coro: essentially surjective} and Definition \ref{defi: perfect-equivalent condition}.
\qed \end{proof}

\subsection{The fully-faithfulness  of the sheafification functor}\label{subsection: fully faithfull}
\subsubsection{Fully faithful on complexes of quasi-coherent sheaves}
We want to show that the sheafification functor $\mathcal{S}$ induces a fully faithful functor
$$
\mathcal{S}: Ho\text{Tw}_{\text{perf}}(X)\to D_{\text{perf}}(\text{Qcoh}(X)).
$$

First we have the following proposition.

\begin{proposition}\label{prop: invertible in hoTw and quasi-isomorphism}
Let the cover $\{U_i\}$ satisfy $H^k(U_i,\mathcal{F})=0$ for any $i$, any quasi-coherent sheaf $\mathcal{F}$ on $U_i$ and any $k\geq 1$.  If $\mathcal{E}$ and $\mathcal{F}$ are both in the subcategory $\text{Tw}_{\text{perf}}(X)$, then $\mathcal{S}(\phi): \mathcal{S}(E)\to \mathcal{S}(\mathcal{F})$ is a quasi-isomorphism if and only if    $\phi: \mathcal{E}\to \mathcal{F}$ is invertible in $Ho\text{Tw}_{\text{perf}}(X)$.
\end{proposition}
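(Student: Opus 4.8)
The plan is to chain together two results already established in the excerpt, so that essentially no new work is needed. First I would note that $\phi$ is implicitly a closed degree-zero morphism (otherwise neither ``$\mathcal{S}(\phi)$ is a quasi-isomorphism'' nor ``$\phi$ is invertible in $Ho\text{Tw}_{\text{perf}}(X)$'' is meaningful). With that, Corollary \ref{coro: weakly equi and sheafification} applies directly: $\mathcal{S}(\phi)\colon \mathcal{S}(\mathcal{E})\to\mathcal{S}(\mathcal{F})$ is a quasi-isomorphism of complexes of $\mathcal{O}_X$-modules if and only if $\phi$ is a weak equivalence in the sense of Definition \ref{defi: quasi-isomorphism in Tw}, i.e. each $(0,0)$-component $\phi^{0,0}_i\colon (E^{\bullet}_i,a^{0,1}_i)\to (F^{\bullet}_i,b^{0,1}_i)$ is a quasi-isomorphism on $U_i$.

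Next, since $\mathcal{E}$ and $\mathcal{F}$ both lie in $\text{Tw}_{\text{perf}}(X)$ and the cover $\{U_i\}$ satisfies the cohomology-vanishing hypothesis $H^k(U_i,\mathcal{F})=0$ for all quasi-coherent $\mathcal{F}$ and $k\geq 1$, I would invoke Proposition \ref{prop: homotopy invertible morphisms}, which states precisely that for twisted \emph{perfect} complexes a closed degree-zero morphism is a weak equivalence if and only if it is invertible in the homotopy category $Ho\text{Tw}_{\text{perf}}(X)$. Concatenating this with the previous equivalence yields the proposition.

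There is no real obstacle here; the substance has been front-loaded into Corollary \ref{coro: weakly equi and sheafification} (which rests on the local property of $\mathcal{S}$, Proposition \ref{prop: associated sheaf locall isom to original twisted complex}) and into Proposition \ref{prop: homotopy invertible morphisms} (whose proof uses Lemma \ref{lemma: homotopy inverse on good covers} to produce the $(0,0)$-component of a homotopy inverse and then a spectral-sequence argument to extend it to a genuine morphism of twisted complexes). The only point worth flagging is that both the perfectness of the $E^{\bullet}_i, F^{\bullet}_i$ and the vanishing of higher cohomology on the $U_i$ are genuinely used: these are exactly the hypotheses of Proposition \ref{prop: homotopy invertible morphisms}, and the corresponding biconditional fails if one of $\mathcal{E},\mathcal{F}$ is only a general twisted complex. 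So the write-up reduces to: apply Corollary \ref{coro: weakly equi and sheafification}, then Proposition \ref{prop: homotopy invertible morphisms}, observing that the hypotheses match.
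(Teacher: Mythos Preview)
Your proposal is correct and matches the paper's own proof essentially verbatim: the paper simply invokes Proposition~\ref{prop: homotopy invertible morphisms} and Corollary~\ref{coro: weakly equi and sheafification} and chains the two biconditionals. Your additional remarks about the implicit closedness/degree-zero assumption and about where the hypotheses are actually consumed are accurate and helpful, but the core argument is identical.
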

\begin{proof}
We first use Proposition \ref{prop: homotopy invertible morphisms}, which claims that  $\phi: \mathcal{E}\to \mathcal{F}$ is invertible in $Ho\text{Tw}(X)$ if and only if $\phi$ is a weak equivalence. Moreover Corollary \ref{coro: weakly equi and sheafification} tells us $\phi$ is a weak equivalence if and only if $\mathcal{S}(\phi): \mathcal{S}(E)\to \mathcal{S}(\mathcal{F})$ is a quasi-isomorphism, hence we get the result.
\qed \end{proof}

Now we are about to prove the full-faithfulness of the functor $\mathcal{S}$. We divide the proof into several steps and first we have the following  lemma.

\begin{lemma}\label{lemma: fullness}[Fullness]
If the cover $\{U_i\}$ is p-good, then the functor $\mathcal{S}: Ho\text{Tw}_{\text{perf}}(X)\to D_{\text{perf}}(\text{Qcoh}(X))$ is full.
\end{lemma}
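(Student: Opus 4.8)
The plan is to lift the derived-category morphism to an honest, closed degree-zero morphism of twisted complexes by routing it through $\mathcal{T}\mathcal{S}$, using the natural transformations $\gamma\colon\mathcal{T}\mathcal{S}\to\mathrm{id}$ and $\tau\colon\mathrm{id}\to\mathcal{S}\mathcal{T}$ together with the lifting property of Proposition~\ref{prop: homotopy inverse for twisted complexes}. So fix $\mathcal{E},\mathcal{F}\in\text{Tw}_{\text{perf}}(X)$ and a morphism $\alpha\colon\mathcal{S}(\mathcal{E})\to\mathcal{S}(\mathcal{F})$ in $D_{\text{perf}}(\text{Qcoh}(X))$. First I would write $\alpha$ as a roof $\mathcal{S}(\mathcal{E})\xleftarrow{s}P\xrightarrow{t}\mathcal{S}(\mathcal{F})$ with $P$ a complex of quasi-coherent sheaves and $s$ a quasi-isomorphism, so that $\alpha=t\circ s^{-1}$ in the derived category (such roofs exist because quasi-isomorphisms form a multiplicative system in $K(\text{Qcoh}(X))$). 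Applying $\mathcal{T}$ and composing with the weak equivalences $\gamma_{\mathcal{E}}\colon\mathcal{T}\mathcal{S}(\mathcal{E})\to\mathcal{E}$ and $\gamma_{\mathcal{F}}\colon\mathcal{T}\mathcal{S}(\mathcal{F})\to\mathcal{F}$ of Proposition~\ref{prop: TS and id are isomorphic}, I obtain $\sigma:=\gamma_{\mathcal{E}}\circ\mathcal{T}(s)\colon\mathcal{T}(P)\to\mathcal{E}$ and $\psi:=\gamma_{\mathcal{F}}\circ\mathcal{T}(t)\colon\mathcal{T}(P)\to\mathcal{F}$; since the restriction of a quasi-isomorphism to an open subset is again a quasi-isomorphism, $\mathcal{T}(s)$ is a weak equivalence, hence so is $\sigma$, and note that $\mathcal{T}(P)$ is a twisted complex of quasi-coherent sheaves while $\mathcal{E}$ is twisted perfect.

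Next I would invoke Proposition~\ref{prop: homotopy inverse for twisted complexes} — legitimate because p-goodness of the cover supplies the required vanishing $H^k(U_I,\mathcal{F})=0$ on all finite intersections — applied to the weak equivalence $\sigma\colon\mathcal{T}(P)\to\mathcal{E}$ and to the closed morphism $\mathrm{id}_{\mathcal{E}}\colon\mathcal{E}\to\mathcal{E}$: it produces a closed degree-zero morphism $\theta\colon\mathcal{E}\to\mathcal{T}(P)$ with $\sigma\circ\theta$ chain homotopic to $\mathrm{id}_{\mathcal{E}}$. I then set $\beta:=\psi\circ\theta\colon\mathcal{E}\to\mathcal{F}$. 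This is a closed degree-zero morphism between twisted \emph{perfect} complexes, hence a well-defined class in $\text{Hom}_{Ho\text{Tw}_{\text{perf}}(X)}(\mathcal{E},\mathcal{F})$, and it is my candidate preimage of $\alpha$.

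Finally I would verify $\mathcal{S}([\beta])=\alpha$. Since $\mathcal{S}$ is a dg-functor, $\sigma\circ\theta\simeq\mathrm{id}_{\mathcal{E}}$ gives $\mathcal{S}(\sigma)\circ\mathcal{S}(\theta)=\mathrm{id}_{\mathcal{S}(\mathcal{E})}$ in the derived category, and as $\mathcal{S}(\sigma)$ is a quasi-isomorphism by Corollary~\ref{coro: weakly equi and sheafification} this identifies $\mathcal{S}(\theta)$ with $\mathcal{S}(\sigma)^{-1}$ there, so $\mathcal{S}(\beta)=\mathcal{S}(\psi)\circ\mathcal{S}(\sigma)^{-1}$. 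Using the naturality of $\tau$ (Proposition~\ref{prop: ST and id are quasi-isomorphic}) and the relation $\mathcal{S}(\gamma_{-})\circ\tau_{\mathcal{S}(-)}=\mathrm{id}$ of Proposition~\ref{prop: adjunction of T and S}, one computes $\mathcal{S}(\sigma)\circ\tau_P=\mathcal{S}(\gamma_{\mathcal{E}})\circ\tau_{\mathcal{S}(\mathcal{E})}\circ s=s$ and likewise $\mathcal{S}(\psi)\circ\tau_P=t$; since $\tau_P$ is a quasi-isomorphism this gives $\mathcal{S}(\sigma)^{-1}=\tau_P\circ s^{-1}$ in the derived category, whence $\mathcal{S}(\beta)=\mathcal{S}(\psi)\circ\tau_P\circ s^{-1}=t\circ s^{-1}=\alpha$.

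I expect the main obstacle to be the lifting step: the whole argument hinges on having a chain-level morphism $\theta\colon\mathcal{E}\to\mathcal{T}(P)$ that is a homotopy section of $\sigma$ over the twisted perfect complex $\mathcal{E}$, and this is exactly the content of Proposition~\ref{prop: homotopy inverse for twisted complexes} (the one place where p-goodness of the cover is genuinely used in this lemma). The remaining steps are bookkeeping with the natural transformations $\gamma$ and $\tau$; one should, however, be slightly careful that the roof representation of $\alpha$ can be taken with apex a complex of \emph{quasi-coherent} sheaves rather than of arbitrary $\mathcal{O}_X$-modules, since this is what makes $\mathcal{T}(P)$ an object to which Proposition~\ref{prop: homotopy inverse for twisted complexes} applies.
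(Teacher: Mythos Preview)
Your argument is correct, and it takes a genuinely different route from the paper's proof. The paper, after forming the roof and applying $\mathcal{T}$, invokes Proposition~\ref{prop: existence of resolution} to choose a twisted \emph{perfect} resolution $\phi\colon\mathcal{E}'\overset{\sim}{\to}\mathcal{T}(P)$; this puts the whole diagram inside $\text{Tw}_{\text{perf}}(X)$, so the left leg $\gamma_{\mathcal{E}}\circ\mathcal{T}(s)\circ\phi$ becomes a weak equivalence between twisted perfect complexes and can be inverted outright via Proposition~\ref{prop: homotopy invertible morphisms}. You instead stay with the non-perfect object $\mathcal{T}(P)$ and use Proposition~\ref{prop: homotopy inverse for twisted complexes} to produce a homotopy section $\theta\colon\mathcal{E}\to\mathcal{T}(P)$ of $\sigma$ directly. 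Both uses of p-goodness are legitimate; your approach is a touch more economical (one lifting step rather than resolution plus inversion), while the paper's approach has the minor advantage that once resolved, everything lives in $Ho\text{Tw}_{\text{perf}}(X)$ and the inverse is an honest two-sided homotopy inverse. Your verification that $\mathcal{S}(\beta)=\alpha$ is also more explicit than the paper's (which simply asserts the equality), and your use of naturality of $\tau$ together with Proposition~\ref{prop: adjunction of T and S} is exactly what is needed to justify that step.
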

\begin{proof}
Let $\mathcal{A} $ and $\mathcal{B}$ be two objects in $\text{Tw}_{\text{perf}}(X)$. A morphism $\mathcal{S}(\mathcal{A}) \to \mathcal{S}(\mathcal{B})  $ in $D_{\text{perf}}(\text{Qcoh}(X))$ can be written as
$$
\begin{tikzcd}[column sep=small]
&\mathcal{P}  \arrow {dl}{\sim}[swap]{\mu}  \arrow {dr}{\varphi}&\\
\mathcal{S}(\mathcal{A}) & &\mathcal{S}(\mathcal{B}).
\end{tikzcd}
$$
Applying $\mathcal{T}$ we get
$$
\begin{tikzcd}[column sep=small]
&\mathcal{T}(\mathcal{P})  \arrow {dl}{\sim}[swap]{\mathcal{T}(\mu)}  \arrow {dr}{\mathcal{T}(\varphi)}&\\
\mathcal{T}\mathcal{S}(\mathcal{A}) & &\mathcal{T}\mathcal{S}(\mathcal{B}).
\end{tikzcd}
$$

$\mathcal{P}$ is a perfect complex since it is quasi-isomorphic to $\mathcal{S}(\mathcal{A})$. Then by Proposition \ref{prop: existence of resolution} there exists a resolution $\phi:\mathcal{E}\overset{\sim}{\to}\mathcal{T}(\mathcal{P})$ and hence
$$
\begin{tikzcd}[column sep=small]
&\mathcal{E}  \arrow {dl}{\sim}[swap]{\mathcal{T}(\mu)\circ \phi}  \arrow {dr}{\mathcal{T}(\varphi)\circ \phi}&\\
\mathcal{T}\mathcal{S}(\mathcal{A}) & &\mathcal{T}\mathcal{S}(\mathcal{B}).
\end{tikzcd}
$$
Compose with $\gamma_{\mathcal{A}}: \mathcal{T}\mathcal{S}(\mathcal{A})\to \mathcal{A}$ and $\gamma_{\mathcal{B}}: \mathcal{T}\mathcal{S}(\mathcal{B})\to \mathcal{B}$ we get
$$
\begin{tikzcd}[column sep=small]
&\mathcal{E}  \arrow {dl}{\sim}[swap]{\gamma_{\mathcal{A}}\circ\mathcal{T}(\mu)\circ \phi}  \arrow {dr}{\gamma_{\mathcal{B}}\circ\mathcal{T}(\varphi)\circ \phi}&\\
\mathcal{A} & & \mathcal{B}.
\end{tikzcd}
$$
The left map $\gamma_{\mathcal{A}}\circ\mathcal{T}(\mu)\circ \phi$ is a weak equivalence between twisted perfect complexes hence by Proposition \ref{prop: homotopy invertible morphisms} it is invertible up to homotopy. Let $\theta: \mathcal{A}\to \mathcal{B}$ be the composition
$$
\theta:=\gamma_{\mathcal{B}}\circ\mathcal{T}(\varphi)\circ \phi\circ (\gamma_{\mathcal{A}}\circ\mathcal{T}(\mu)\circ \phi)^{-1}.
$$

It is clear that  $\mathcal{S}(\theta)$ equals to $\varphi\circ (\mu)^{-1}$ in the derived category. We know that $\mathcal{S}$ is full.
\qed \end{proof}

\begin{lemma}\label{lemma: faithfulness}[Faithfulness]
If the cover $\{U_i\}$ is p-good, then the functor $\mathcal{S}: Ho\text{Tw}_{\text{perf}}(X)\to D_{\text{perf}}(\text{Qcoh}(X))$ is faithful.
\end{lemma}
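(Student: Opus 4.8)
The plan is to use the twisting functor $\mathcal{T}$ together with the twisted resolution --- the same toolkit as in the proof of Lemma \ref{lemma: fullness} --- to transport a null-homotopy from complexes of sheaves back into $\text{Tw}_{\text{perf}}(X)$. Let $\mathcal{A},\mathcal{B}$ be objects of $\text{Tw}_{\text{perf}}(X)$ and let $\phi\colon\mathcal{A}\to\mathcal{B}$ be a closed degree zero morphism whose class in $Ho\text{Tw}_{\text{perf}}(X)$ is sent to $0$ in $D_{\text{perf}}(\text{Qcoh}(X))$; the goal is to show that $\phi$ is null-homotopic. First I would invoke the calculus of fractions presenting $D(\text{Qcoh}(X))$ as a localization of $K(\text{Qcoh}(X))$ at quasi-isomorphisms: since the honest chain map $\mathcal{S}(\phi)$ represents the zero morphism, there exist a complex of quasi-coherent sheaves $\mathcal{P}$ and a quasi-isomorphism $s\colon\mathcal{P}\to\mathcal{S}(\mathcal{A})$ such that $\mathcal{S}(\phi)\circ s$ is null-homotopic in $K(\text{Qcoh}(X))$, say $\mathcal{S}(\phi)\circ s=d\eta$ for a degree $-1$ morphism $\eta\colon\mathcal{P}\to\mathcal{S}(\mathcal{B})$. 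Here $\mathcal{P}$ is automatically a perfect complex, being quasi-isomorphic to the perfect complex $\mathcal{S}(\mathcal{A})$.

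Next I would apply the dg-functor $\mathcal{T}$. Since $s$ restricts to a quasi-isomorphism on each $U_i$, the morphism $\mathcal{T}(s)\colon\mathcal{T}(\mathcal{P})\to\mathcal{T}\mathcal{S}(\mathcal{A})$ is a weak equivalence, and $\mathcal{T}\mathcal{S}(\phi)\circ\mathcal{T}(s)=d\,\mathcal{T}(\eta)$. Using that $\gamma\colon\mathcal{T}\mathcal{S}\Rightarrow\mathrm{id}$ is a natural transformation --- which, like Proposition \ref{prop: adjunction of T and S}, is a direct unwinding of the definition of $\gamma$ as projection onto components --- one obtains $\phi\circ\gamma_{\mathcal{A}}=\gamma_{\mathcal{B}}\circ\mathcal{T}\mathcal{S}(\phi)$, hence
$$
\phi\circ\bigl(\gamma_{\mathcal{A}}\circ\mathcal{T}(s)\bigr)=\gamma_{\mathcal{B}}\circ\mathcal{T}\mathcal{S}(\phi)\circ\mathcal{T}(s)=\gamma_{\mathcal{B}}\circ d\,\mathcal{T}(\eta)=d\bigl(\gamma_{\mathcal{B}}\circ\mathcal{T}(\eta)\bigr),
$$
the last equality because $\gamma_{\mathcal{B}}$ is closed of degree zero. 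Now $\gamma_{\mathcal{A}}\circ\mathcal{T}(s)$ is a weak equivalence $\mathcal{T}(\mathcal{P})\to\mathcal{A}$ by Proposition \ref{prop: TS and id are isomorphic}, but $\mathcal{T}(\mathcal{P})$ need not be twisted perfect, so I would interpose a twisted resolution: as $\{U_i\}$ is p-good and $\mathcal{P}$ is a perfect complex of quasi-coherent sheaves, Proposition \ref{prop: existence of resolution} supplies a twisted perfect complex $\mathcal{E}$ and a weak equivalence $\psi\colon\mathcal{E}\overset{\sim}{\to}\mathcal{T}(\mathcal{P})$. Then $w:=\gamma_{\mathcal{A}}\circ\mathcal{T}(s)\circ\psi\colon\mathcal{E}\to\mathcal{A}$ is a weak equivalence between twisted perfect complexes, hence invertible in $Ho\text{Tw}_{\text{perf}}(X)$ by Proposition \ref{prop: homotopy invertible morphisms}, and precomposing the display with the closed morphism $\psi$ yields $\phi\circ w=d\bigl(\gamma_{\mathcal{B}}\circ\mathcal{T}(\eta)\circ\psi\bigr)$.

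To conclude, note that $\gamma_{\mathcal{B}}\circ\mathcal{T}(\eta)\circ\psi$ is a degree $-1$ morphism $\mathcal{E}\to\mathcal{B}$, and since $\text{Tw}_{\text{perf}}(X)$ is a full dg-subcategory of $\text{Tw}(X)$ containing both $\mathcal{E}$ and $\mathcal{B}$, this is a genuine null-homotopy inside $\text{Tw}_{\text{perf}}(X)$; therefore $[\phi]\circ[w]=[\phi\circ w]=0$ in $Ho\text{Tw}_{\text{perf}}(X)$, and since $[w]$ is an isomorphism we get $[\phi]=0$, so that $\mathcal{S}$ is faithful. The one point deserving care --- and, I expect, the only real (if modest) obstacle --- is verifying the naturality of $\gamma$ along with the attendant sign bookkeeping for the composition and differential conventions of $\text{Tw}(X)$; note that naturality of $\gamma$ up to homotopy would already suffice, since any null-homotopy of $\phi\circ\gamma_{\mathcal{A}}-\gamma_{\mathcal{B}}\circ\mathcal{T}\mathcal{S}(\phi)$ can be absorbed into the homotopy above. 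Alternatively, one could avoid naturality altogether by using Proposition \ref{prop: homotopy inverse for twisted complexes} to factor $\mathrm{id}_{\mathcal{A}}$ through $\gamma_{\mathcal{A}}$ up to homotopy and transport the null-homotopy along that factorization.
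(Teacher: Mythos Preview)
Your proof is correct and follows essentially the same route as the paper: pull back the null-homotopy through a roof $\mathcal{P}\xrightarrow{\sim}\mathcal{S}(\mathcal{A})$, apply $\mathcal{T}$, use naturality of $\gamma$ to replace $\mathcal{T}\mathcal{S}(\phi)$ by $\phi$, precompose with a twisted resolution $\mathcal{E}\xrightarrow{\sim}\mathcal{T}(\mathcal{P})$, and cancel the resulting homotopy-invertible weak equivalence via Proposition~\ref{prop: homotopy invertible morphisms}. The paper's version is slightly terser (it does not name the homotopy $\eta$ explicitly and simply asserts the commutative square for $\gamma$), but the argument is the same.
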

\begin{proof}
Let $\theta: \mathcal{A}\to \mathcal{B}$ be a morphism between twisted perfect complexes such that $\mathcal{S}(\theta)=0$ in the derived category. Then, by definition, there is a complex $\mathcal{P}$ together with a quasi-isomorphism
$$
\mu: \mathcal{P}\to \mathcal{S}(\mathcal{A})
$$
such that $\mathcal{S}(\theta)\circ \mu$ is homotopic to $0$. It follows that
$$
\mathcal{T}(\mathcal{P})\overset{\mathcal{T}(\mu)}{\longrightarrow} \mathcal{T}\mathcal{S}(\mathcal{A})\overset{\mathcal{T}(\mathcal{S}(\theta))}{\longrightarrow} \mathcal{T}\mathcal{S}(\mathcal{B})
$$
is homotopic to $0$.

On the other hand we have the following commutative diagram
$$
\begin{CD}
\mathcal{T}(\mathcal{P})@>\mathcal{T}(\mu)>>\mathcal{T}(\mathcal{S}(\mathcal{A})) @>\mathcal{T}(\mathcal{S}(\theta))>>\mathcal{T}(\mathcal{S}(\mathcal{B}))\\
@. @V\sim V\gamma_{\mathcal{A}}V @V\gamma_{\mathcal{B}}V\sim V\\
@. \mathcal{A}@>\theta >>\mathcal{B}.
\end{CD}
$$
hence $\theta\circ \gamma_{\mathcal{A}}\circ \mathcal{T}(\mu)$ is homotopic to $0$ and so is $\theta\circ \gamma_{\mathcal{A}}\circ \mathcal{T}(\mu)\circ \phi$, where $\phi: \mathcal{E}\to \mathcal{T}(\mathcal{P})$ is as in the proof of Lemma \ref{lemma: fullness}. From this we conclude that $\theta$ is homotopic to $0$ because $\gamma_{\mathcal{A}}\circ \mathcal{T}(\mu)\circ \phi$ is invertible up to homotopy.
\qed \end{proof}

\begin{corollary}\label{coro: fully faithful}
If the cover $\{U_i\}$ is p-good, then the functor $\mathcal{S}: Ho\text{Tw}_{\text{perf}}(X)\to D_{\text{perf}}(\text{Qcoh}(X))$ is fully faithful.
\end{corollary}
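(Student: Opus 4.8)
The plan is essentially bookkeeping: the statement is the conjunction of the two lemmas just proved. First I would cite Lemma~\ref{lemma: fullness} to obtain fullness of $\mathcal{S}\colon Ho\text{Tw}_{\text{perf}}(X)\to D_{\text{perf}}(\text{Qcoh}(X))$, then Lemma~\ref{lemma: faithfulness} to obtain faithfulness, both under the single hypothesis that $\{U_i\}$ is p-good. Since a functor that is full and faithful is by definition fully faithful, this completes the argument.

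Although the corollary itself is a formality, it is worth recalling the mechanism behind the two lemmas, since that is where all the content sits. For fullness one starts with a roof $\mathcal{S}(\mathcal{A})\xleftarrow{\sim}\mathcal{P}\xrightarrow{\varphi}\mathcal{S}(\mathcal{B})$ representing a map in the derived category, applies $\mathcal{T}$, replaces $\mathcal{T}(\mathcal{P})$ by a twisted perfect resolution $\phi\colon\mathcal{E}\xrightarrow{\sim}\mathcal{T}(\mathcal{P})$ (Proposition~\ref{prop: existence of resolution}), and composes with the weak equivalences $\gamma_{\mathcal{A}}$ and $\gamma_{\mathcal{B}}$ of Proposition~\ref{prop: TS and id are isomorphic}. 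The resulting left leg is a weak equivalence of twisted perfect complexes, hence homotopy-invertible by Proposition~\ref{prop: homotopy invertible morphisms}, so the right leg composed with its inverse is a morphism in $Ho\text{Tw}_{\text{perf}}(X)$ whose image under $\mathcal{S}$ is the prescribed class. For faithfulness the same transport-and-resolve procedure applied to a morphism $\theta$ with $\mathcal{S}(\theta)=0$ shows that $\theta\circ\gamma_{\mathcal{A}}\circ\mathcal{T}(\mu)\circ\phi$ is null-homotopic, and the invertibility of $\gamma_{\mathcal{A}}\circ\mathcal{T}(\mu)\circ\phi$ in the homotopy category forces $\theta$ itself to be null-homotopic.

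There is no genuine obstacle here: the proof is a one-line combination of the two lemmas. The only point that merits a glance is that the p-good hypothesis imposed in the corollary is literally the hypothesis under which each of Lemmas~\ref{lemma: fullness} and~\ref{lemma: faithfulness} was established, so no common refinement or strengthening of conditions is needed before quoting them.
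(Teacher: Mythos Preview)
Your proposal is correct and matches the paper's own proof, which simply cites Lemmas~\ref{lemma: fullness} and~\ref{lemma: faithfulness} as an immediate corollary. Your additional summary of the mechanisms behind those two lemmas is accurate and faithful to how they were proved in the paper.
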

\begin{proof}
It is a immediate corollary of Lemma \ref{lemma: fullness} and Lemma \ref{lemma: faithfulness}.
\qed \end{proof}

\begin{remark}\label{remark: higher morphisms are good}
The great advantage of twisted complexes is that we have more flexibility on morphisms. For example when $(X,\mathcal{O}_X)$ is a projective scheme, then it is well-known that any perfect complex on $X$ is strictly perfect. In other words let $L(X)$ be the dg-category of two-side bounded complexes of finitely generated locally free sheaves on $X$. Then the natural functor $HoL(X)\to  D_{\text{perf}}(\text{Qcoh}(X))$ is essentially surjective but not necessarily fully faithful.

In fact let $\mathcal{E}$ and $\mathcal{F}$ be two objects in $L(X)$  and
$\phi: \mathcal{E}\overset{\sim}{\to} \mathcal{F}$ be a quasi-isomorphism. Then in general $\phi$ does not have an inverse in $HoL(X)$. Nevertheless the inverse of $\phi$  exists in $Ho\text{Tw}_{\text{perf}}(X)$ if we consider $\mathcal{E}$ and $\mathcal{F}$ as twisted perfect complexes through the twisting functor $\mathcal{T}$ and the cover is p-good.
\end{remark}

Now we can state the main theorem of this paper.

\begin{theorem}\label{thm: dg-enhancement}[dg-enhancement, see Theorem \ref{thm: dg-enhancement in the introduction} in the Introduction]
If the cover $\{U_i\}$ is p-good, then the sheafification functor $\mathcal{S}: \text{Tw}_{\text{perf}}(X)\to \text{Qcoh}_{\text{perf}}(X)$ gives an equivalence of categories
$$
\mathcal{S}: Ho\text{Tw}_{\text{perf}}(X)\to D_{\text{perf}}(\text{Qcoh}(X))
$$
\end{theorem}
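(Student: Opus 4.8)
The plan is to assemble the theorem from three facts already in hand: that $\mathcal{S}$ descends to a well-defined exact functor on homotopy categories, that this functor is essentially surjective, and that it is fully faithful. First I would record that by Proposition \ref{prop:pre-triangulated} the dg-category $\text{Tw}_{\text{perf}}(X)$ is pre-triangulated, so $Ho\text{Tw}_{\text{perf}}(X)$ is triangulated; since $\mathcal{S}$ is a dg-functor it is compatible with the shift and mapping-cone constructions of Definitions \ref{defi: shift of twisted complex} and \ref{defi: mapping cone}, and by Corollary \ref{coro: perfectness of sheafification} together with Remark \ref{remark: sheafification is both quasi-coherent and perfect} its image lies in $\text{Qcoh}_{\text{perf}}(X)$. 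Passing to homotopy categories and then composing with the localization $K_{\text{perf}}(\text{Qcoh}(X))\to D_{\text{perf}}(\text{Qcoh}(X))$ therefore produces the exact functor $\mathcal{S}: Ho\text{Tw}_{\text{perf}}(X)\to D_{\text{perf}}(\text{Qcoh}(X))$ whose equivalence is asserted.

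Essential surjectivity is precisely Corollary \ref{coro: essentially surjective}: given $P\in\text{Qcoh}_{\text{perf}}(X)$, apply the twisting functor to obtain $\mathcal{T}(P)$, invoke the twisted-resolution Proposition \ref{prop: existence of resolution} (this is the step that uses p-goodness of $\{U_i\}$) to produce a twisted perfect complex $\mathcal{E}$ with a weak equivalence $\phi:\mathcal{E}\xrightarrow{\sim}\mathcal{T}(P)$, and then glue the quasi-isomorphism $\mathcal{S}(\phi):\mathcal{S}(\mathcal{E})\xrightarrow{\sim}\mathcal{S}\mathcal{T}(P)$ supplied by the weak-equivalence criterion Corollary \ref{coro: weakly equi and sheafification} to the quasi-isomorphism $\tau_P:P\xrightarrow{\sim}\mathcal{S}\mathcal{T}(P)$ of Proposition \ref{prop: ST and id are quasi-isomorphic}. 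This identifies $P$ with $\mathcal{S}(\mathcal{E})$ in $D_{\text{perf}}(\text{Qcoh}(X))$, with $\mathcal{E}$ in the essential image.

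Full faithfulness is Corollary \ref{coro: fully faithful}, i.e. Lemmas \ref{lemma: fullness} and \ref{lemma: faithfulness}, and the mechanism in both is the same ``transport along $\mathcal{T}$, resolve, and project back along $\gamma$'' argument. For fullness one writes a morphism $\mathcal{S}(\mathcal{A})\to\mathcal{S}(\mathcal{B})$ as a roof $\mathcal{S}(\mathcal{A})\xleftarrow{\sim}\mathcal{P}\to\mathcal{S}(\mathcal{B})$, applies $\mathcal{T}$, resolves $\mathcal{T}(\mathcal{P})$ by a twisted perfect complex $\mathcal{E}$ using Proposition \ref{prop: existence of resolution}, composes with the natural weak equivalences $\gamma_{\mathcal{A}}$ and $\gamma_{\mathcal{B}}$ of Proposition \ref{prop: TS and id are isomorphic} to get a roof $\mathcal{A}\xleftarrow{\sim}\mathcal{E}\to\mathcal{B}$ \emph{inside} $\text{Tw}_{\text{perf}}(X)$, inverts the backward leg by Proposition \ref{prop: homotopy invertible morphisms}, and checks with Proposition \ref{prop: adjunction of T and S} that applying $\mathcal{S}$ to the resulting $\theta:\mathcal{A}\to\mathcal{B}$ recovers the original morphism. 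Faithfulness is the degenerate case: if $\mathcal{S}(\theta)$ becomes null-homotopic on some resolution $\mathcal{P}\to\mathcal{S}(\mathcal{A})$, the commuting square involving $\gamma_{\mathcal{A}},\gamma_{\mathcal{B}}$ forces $\theta$ to vanish after precomposition with the homotopy-invertible map $\gamma_{\mathcal{A}}\circ\mathcal{T}(\mu)\circ\phi$, hence $\theta=0$ in $Ho\text{Tw}_{\text{perf}}(X)$.

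I do not expect the theorem to contain an obstacle of its own: the genuinely hard work is front-loaded into Proposition \ref{prop: associated sheaf locall isom to original twisted complex} (the local description of $\mathcal{S}$, with the explicit contracting homotopy built from the maps $\epsilon^p$) and into Proposition \ref{prop: existence of resolution} (the inductive construction of the Maurer--Cartan data, fed by the acyclicity Lemmas \ref{lemma: acyclic of complex on good covers} and \ref{lemma: homotopy inverse on good covers}). The only delicate points in the theorem proper are bookkeeping: verifying that $\mathcal{S}$ really descends to an \emph{exact} functor and that the roof manipulations of Lemmas \ref{lemma: fullness}--\ref{lemma: faithfulness} respect the triangulated structure; given Proposition \ref{prop:pre-triangulated} these are routine.
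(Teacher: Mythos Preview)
Your proposal is correct and follows exactly the paper's approach: the theorem is obtained as an immediate consequence of Corollary \ref{coro: essentially surjective} (essential surjectivity) and Corollary \ref{coro: fully faithful} (full faithfulness), and your write-up simply unpacks the content of those corollaries and the supporting results feeding into them. The paper's own proof is a single sentence citing these two corollaries, so you have the right argument with more exposition than strictly needed.
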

\begin{proof}
This is a immediate consequence of Corollary \ref{coro: essentially surjective} and Corollary \ref{coro: fully faithful}.
\qed \end{proof}

\begin{example}\label{eg: dg-enhancement of quasi-coherent}
We have the following cases which we can apply Theorem \ref{thm: dg-enhancement}. In fact we only need to verify that the following spaces have p-good  covers. For more discussion on p-good covers see Appendix \ref{appendix: good covers}.
\begin{itemize}
\item Let $(X,\mathcal{O}_X)$ be a  separated scheme and $\{U_i\}$ be an affine cover, then we have  an equivalence of categories $\mathcal{S}: Ho\text{Tw}_{\text{perf}}(X)\to D_{\text{perf}}(\text{Qcoh}(X))$.

\item Let $X$ be a complex manifold with the structure sheaf of holomorphic functions, then we have an equivalence of categories $\mathcal{S}: Ho\text{Tw}_{\text{perf}}(X)\to D_{\text{perf}}(\text{Qcoh}(X))$.

\item Let $X$ be a smooth manifold with the structure sheaf of smooth functions, then we have an equivalence of categories $\mathcal{S}: Ho\text{Tw}_{\text{perf}}(X)\to D_{\text{perf}}(\text{Qcoh}(X))$.
\end{itemize}
\end{example}

\subsubsection{Fully faithful on complexes of $\mathcal{O}_X$-modules}
Similar to the discussion in Section \ref{subsubsection: Essential surjectivity on complexes of A-modules}, we can add certain conditions on $X$ and get the fully faithfulness on perfect complexes of arbitrary $\mathcal{O}_X$-modules.

\begin{corollary}\label{coro: fully faithful on non-quasi-coherent sheaves}[Fully faithful]
   If $X$ satisfies the perfect-equivalent  condition and the cover $\{U_i\}$ is p-good, then the functor
$$
\mathcal{S}: Ho\text{Tw}_{\text{perf}}(X)\to D_{\text{perf}}(X)
$$
is fully faithful.
\end{corollary}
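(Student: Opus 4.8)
The plan is to factor the functor $\mathcal{S}\colon Ho\text{Tw}_{\text{perf}}(X)\to D_{\text{perf}}(X)$ through the derived category of complexes of quasi-coherent sheaves and then invoke the fully-faithfulness already established in Corollary \ref{coro: fully faithful} together with the perfect-equivalent condition. First I would recall that, by Remark \ref{remark: sheafification of twisted perfect complex is quasi-coherent} and Corollary \ref{coro: perfectness of sheafification} (combined in Remark \ref{remark: sheafification is both quasi-coherent and perfect}), the sheafification functor restricted to $\text{Tw}_{\text{perf}}(X)$ lands in $\text{Qcoh}_{\text{perf}}(X)$; consequently the functor $\mathcal{S}\colon Ho\text{Tw}_{\text{perf}}(X)\to D_{\text{perf}}(X)$ appearing in the statement is, by the very construction of $\mathcal{S}$, the composition
$$
Ho\text{Tw}_{\text{perf}}(X)\xrightarrow{\ \mathcal{S}\ } D_{\text{perf}}(\text{Qcoh}(X))\longrightarrow D_{\text{perf}}(X),
$$
where the second arrow is the functor induced by the inclusion $i\colon\text{Qcoh}(X)\to\text{Sh}(X)$ as in Remark \ref{remark: quasi-coherent component and general sheaves of A-modules} (restricted to perfect objects).

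Next I would observe that since the cover $\{U_i\}$ is p-good, Corollary \ref{coro: fully faithful} says the first arrow $\mathcal{S}\colon Ho\text{Tw}_{\text{perf}}(X)\to D_{\text{perf}}(\text{Qcoh}(X))$ is fully faithful. By hypothesis $X$ satisfies the perfect-equivalent condition, so by Definition \ref{defi: perfect-equivalent condition} the second arrow $D_{\text{perf}}(\text{Qcoh}(X))\to D_{\text{perf}}(X)$ is an equivalence of categories, in particular fully faithful. A composition of fully faithful functors is fully faithful, hence so is $\mathcal{S}\colon Ho\text{Tw}_{\text{perf}}(X)\to D_{\text{perf}}(X)$, which is exactly the claim.

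The only point that requires any care is the bookkeeping in the first paragraph: one must check that the functor $\mathcal{S}$ targeting $D_{\text{perf}}(X)$ literally \emph{is} (not merely is naturally isomorphic to) the stated composite, i.e.\ that the sheafification construction of Definition \ref{defi: sheafification functor} composed with the forgetful passage $\text{Qcoh}(X)\hookrightarrow\text{Sh}(X)$ agrees with the a priori definition of $\mathcal{S}$ on all of $\text{Tw}_{\text{perf}}(X)$. This is immediate from the definitions, since the sheaves $\mathcal{S}^n(\mathcal{E})=\prod_{p+q=n}E^q_{i_0}|_{U_{i_0\ldots i_p}}$ and the differential $\delta_a$ are constructed in $\text{Sh}(X)$ and merely happen to be quasi-coherent when $\mathcal{E}$ is twisted perfect; so no real obstacle remains, and everything else is formal.
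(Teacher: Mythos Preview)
Your proof is correct and follows exactly the same route as the paper: the paper's proof is the one-line statement that this is ``a direct consequence of Corollary~\ref{coro: fully faithful} and the perfect-equivalent condition (Definition~\ref{defi: perfect-equivalent condition}),'' and you have simply unpacked that sentence carefully, including the factorization through $D_{\text{perf}}(\text{Qcoh}(X))$ and the minor bookkeeping about the composite. There is nothing to add.
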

\begin{proof}
It is a direct consequence of Corollary \ref{coro: fully faithful} and the  perfect-equivalent  condition (Definition \ref{defi: perfect-equivalent condition}).
\qed \end{proof}

\begin{theorem}\label{thm: dg-enhancement of A-modules}[dg-enhancement, see Theorem \ref{thm: dg-enhancement of A-modules in the introduction} in the Introduction]
If $X$ satisfies the perfect-equivalent  condition and  the cover $\{U_i\}$ is p-good, then the sheafification functor $\mathcal{S}: \text{Tw}_{\text{perf}}(X)\to \text{Sh}_{\text{perf}}(X)$ gives an  equivalence of categories
$$
\mathcal{S}: Ho\text{Tw}_{\text{perf}}(X)\to D_{\text{perf}}(X)
$$
\end{theorem}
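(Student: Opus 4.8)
The plan is to bootstrap from the quasi-coherent case. Under the p-good hypothesis, Theorem~\ref{thm: dg-enhancement} already provides an equivalence $\mathcal{S}\colon Ho\text{Tw}_{\text{perf}}(X)\xrightarrow{\ \sim\ }D_{\text{perf}}(\text{Qcoh}(X))$, so it remains only to compose this with the natural functor $D_{\text{perf}}(\text{Qcoh}(X))\to D_{\text{perf}}(X)$, which is an equivalence by the perfect-equivalent condition (Definition~\ref{defi: perfect-equivalent condition}). First I would note that, by construction (Corollary~\ref{coro: perfectness of sheafification} and Remark~\ref{remark: sheafification is both quasi-coherent and perfect}), the dg-functor $\mathcal{S}\colon\text{Tw}_{\text{perf}}(X)\to\text{Sh}_{\text{perf}}(X)$ is the composite of $\mathcal{S}\colon\text{Tw}_{\text{perf}}(X)\to\text{Qcoh}_{\text{perf}}(X)$ with the inclusion dg-functor $\text{Qcoh}_{\text{perf}}(X)\hookrightarrow\text{Sh}_{\text{perf}}(X)$. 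Passing to homotopy categories, the induced functor factors as
$$
Ho\text{Tw}_{\text{perf}}(X)\xrightarrow{\ \mathcal{S}\ }D_{\text{perf}}(\text{Qcoh}(X))\longrightarrow D_{\text{perf}}(X),
$$
a composite of two equivalences, hence an equivalence.

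Alternatively — and this is the route the paper seems set up for — I would simply invoke Corollary~\ref{coro: essentially surjective on O-modules} for essential surjectivity of $\mathcal{S}\colon Ho\text{Tw}_{\text{perf}}(X)\to D_{\text{perf}}(X)$ and Corollary~\ref{coro: fully faithful on non-quasi-coherent sheaves} for its full faithfulness, each of which has already carried out exactly this reduction to the quasi-coherent statement via the perfect-equivalent condition. An exact functor of triangulated categories that is essentially surjective, full, and faithful is an equivalence, so the theorem follows at once.

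The hard part is not located in this proof at all: it sits in the perfect-equivalent hypothesis, namely that $D_{\text{perf}}(\text{Qcoh}(X))\to D_{\text{perf}}(X)$ be an equivalence. This fails for general ringed spaces and is only verified for reasonable $X$ — quasi-compact semi-separated schemes, Noetherian schemes — in Appendix~\ref{appendix: quasi-coherent, pseudo-coherent complexes and coherent sheaves}. Granting that input, the only residual care is the bookkeeping needed to see that the displayed factorization genuinely commutes at the level of derived categories: this is automatic because a quasi-isomorphism of complexes of quasi-coherent sheaves is in particular a quasi-isomorphism of complexes of $\mathcal{O}_X$-modules, and the sheafification construction of Definition~\ref{defi: sheaf associated to twisted complex} does not depend on which ambient abelian category one works in.
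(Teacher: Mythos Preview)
Your proposal is correct and matches the paper's proof exactly: the paper simply cites Corollary~\ref{coro: essentially surjective on O-modules} and Corollary~\ref{coro: fully faithful on non-quasi-coherent sheaves}, precisely your second route. Your first route (compose Theorem~\ref{thm: dg-enhancement} with the perfect-equivalent equivalence) is an equivalent repackaging, since those two corollaries are themselves just that composition split into essential surjectivity and full faithfulness.
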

\begin{proof}
This is a immediate consequence of Corollary \ref{coro: essentially surjective on O-modules} and Corollary \ref{coro: fully faithful on non-quasi-coherent sheaves}.
\qed \end{proof}

\begin{example}\label{eg: dg-enhancement of A-modules}
The application of Theorem \ref{thm: dg-enhancement of A-modules} is more restrictive than Theorem \ref{thm: dg-enhancement} since we need to verify the perfect-equivalent condition. Nevertheless it contains the following important cases:
 Let $(X,\mathcal{O}_X)$ be a quasi-compact
and semi-separated or Noetherian scheme and $\{U_i\}$ be an affine cover, then we have  an  equivalence of categories $\mathcal{S}: Ho\text{Tw}_{\text{perf}}(X)\to D_{\text{perf}}(X)$. See Appendix \ref{appendix: quasi-coherent, pseudo-coherent complexes and coherent sheaves} Corollary \ref{coro: spaces satisfy the perfect-equivalent condition}.
\end{example}

\begin{remark}\label{remark: relation with Lunts-Schnurer}
As we mentioned in Remark \ref{remark: relation with Lunts-Schnurer introduction} in the introduction, the twisted complexes is very similar to the \v{C}ech enhancement introduced in \cite{lunts2014new}. In fact for a complex of sheaves $\mathcal{E}$ on $X$, we could see that $\mathcal{S}\mathcal{T}(\mathcal{E})$ is almost the same as the $\mathcal{E}_{\supset}$ in \cite{lunts2014new} Section 3.2.3. Nevertheless, our twisted complexes and the \v{C}ech enhancement in \cite{lunts2014new} have the following two main differences.
\begin{enumerate}
\item Twisted complexes allow twists ($a^{i,1-i}$'s) hence we could find resolutions of non-strictly perfect complexes on non-GSP schemes.

\item We do not have an order on the open subsets and we do not assume the open cover is finite.

\item We do not us the pushforward $i_{!}$ hence we do not consider the $\mathcal{E}^{\supset}$ as in \cite{lunts2014new} Section 3.2.3.
\end{enumerate}
\end{remark}

\section{APPLICATIONS OF TWISTED COMPLEXES}\label{section: applications}
Twisted complexes have various applications. For example in \cite{o1985grothendieck} twisted complexes are used to formulate and prove a Grothendieck-Riemann-Roch theorem for perfect complexes and in \cite{gillet1986k} they are used to compute the higher algebraic K-theory of schemes.

\begin{remark}
Neither of the above works uses the fact that twisted perfect complexes is a dg-enhancement of perfect complexes.
\end{remark}

In this paper we talk about the application of twisted complexes in descent theory. It is well-known that one of the drawbacks of derived categories is that they do not satisfy descent. In more details, let $X$ be a scheme and $U$, $V$ be an open cover of $X$, then we have derived categories $D_{\text{perf}}(X)$, $D_{\text{perf}}(U)$, $D_{\text{perf}}(V)$, and $D_{\text{perf}}(U\cap V)$. Moreover we have the fiber product of categories $D_{\text{perf}}(U)\times _{D_{\text{perf}}(U\cap V)}D_{\text{perf}}(V)$. However the natural functor
$$
D_{\text{perf}}(X)\to D_{\text{perf}}(U)\times _{D_{\text{perf}}(U\cap V)}D_{\text{perf}}(V)
$$
is not an equivalence even in the case that $X=\mathbb{P}^1$ and $U$, $V$ are the upper and lower hemispheres. See \cite{bertrand2011lectures} Section 2.2 (d) for more details.

This problem can be solved in the framework of dg-categories. In fact Tabuada in \cite{tabuada2010homotopy} gives an explicit construction of path object in dg-categories, which leads to the following definition of homotopy fiber product of dg-categories.

\begin{definition}\label{defi: homotopy fiber product}[\cite{ben2013milnor} Section 4]
Let $A$, $B$, $C$ be dg-categories and $\phi: A\to C$, $\theta: B\to C$ be dg-functors. Then the homotopy fiber product $A\times^h_C B$ is a dg-category with objects
\begin{equation*}
\begin{split}
ob(A\times^h_C B)&=\{M,N,f|M\in ob(A),N\in ob(B),\\
&f:\phi(M)\to \theta(N)  \text{ closed of degree }0 \text{ and invertible in }H^0(C)\}.
\end{split}
\end{equation*}

The degree $k$ morphisms between $(M_1,N_1,f_1)$ and $(M_2,N_2,f_2)$ are given by
$$
(\mu,\nu,\tau)\in A^k(M_1,M_2)\oplus B^k(N_1,N_2)\oplus C^{k-1}(\phi(M_1),\theta(N_2))
$$
with composition given by
$$
(\mu^{\prime},\nu^{\prime},\tau^{\prime})(\mu,\nu,\tau)=(\mu^{\prime}\mu,\nu^{\prime}\nu,\tau^{\prime}\phi(\mu)+\theta(\nu^{\prime})\tau).
$$

The differential on the morphisms is given by
$$
d(\mu,\nu,\tau)=(d\mu,d\nu,d\tau+f_2\phi(\mu)-(-1)^k\theta(\nu)f_1).
$$
\end{definition}

\begin{remark}
We should mention that in \cite{bertrand2011lectures} Section 5.3 To\"{e}n uses the injective enhancement $L_{pe}(X)$ and claims that
 $$
 L_{pe}(X)\overset{\sim}\to L_{pe}(U)\times^h_{L_{pe}(U\cap V)} L_{pe}(V).
 $$

 Moreover in \cite{ben2013milnor} the authors use the cohesive modules as another dg-enhancements and prove that they have the descent property.
 \end{remark}

Now we move on to the descent problem of twisted perfect complexes. Let  $X$ be a separated scheme and $X=U\cup V$ be two open subsets. For simplicity let us consider the   case that $U$ and $V$ are affine. Then $U \cap V$ is  affine too. Moreover, $\{U ,V\}$ gives an affine (hence p-good) open cover of $X$ and we have $\text{Tw}_{\text{perf}}(X,\mathcal{O}_X,\{U ,V\})$.

It is clear that  $\text{Tw}_{\text{perf}}(U,\mathcal{O}_U,\{U \})$ is exactly the dg-category of strictly perfect complexes on $U$. The same assertion holds for  $\text{Tw}_{\text{perf}}(V,\mathcal{O}_V,\{V \})$ and $\text{Tw}_{\text{perf}}(U\cap V,\mathcal{O}_{U\cap V},\{U \cap V\})$. There are natural dg-functors
$$
\phi: \text{Tw}_{\text{perf}}(U,\mathcal{O}_U,\{U \})\to \text{Tw}_{\text{perf}}(U\cap V,\mathcal{O}_{U\cap V},\{U \cap V\})
$$
and
$$
\theta: \text{Tw}_{\text{perf}}(V,\mathcal{O}_V,\{V\}) \to \text{Tw}_{\text{perf}}(U\cap V,\mathcal{O}_{U\cap V},\{U \cap V\})
$$
given by restriction.

We omit the open covers and structure rings in the notation of the twisted perfect complexes and we have the following descent property.

\begin{proposition}\label{prop: descent of twisted perfect complexes}
Let $X$, $U$, $V$ be as above, then we have a quasi-equivalence of dg-categories
$$
\text{Tw}_{\text{perf}}(X)\overset{\sim}{\to} \text{Tw}_{\text{perf}}(U)\times^h_{\text{Tw}_{\text{perf}}(U\cap V)}\text{Tw}_{\text{perf}}(V).
$$
\end{proposition}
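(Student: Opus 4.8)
The plan is to construct the restriction dg-functor
$$
F\colon \text{Tw}_{\text{perf}}(X)\longrightarrow \text{Tw}_{\text{perf}}(U)\times^h_{\text{Tw}_{\text{perf}}(U\cap V)}\text{Tw}_{\text{perf}}(V)
$$
and to check that it is quasi-fully faithful and quasi-essentially surjective. On an object $\mathcal{E}=(E^{\bullet}_U,E^{\bullet}_V,a)$ I would set $F(\mathcal{E})=(M,N,f)$, where $M$ and $N$ are the restrictions of $\mathcal{E}$ to $U$ and to $V$ for the one-element covers $\{U\},\{V\}$ --- these are the expected strictly perfect complexes $(E^{\bullet}_U,a^{0,1}_U)$, $(E^{\bullet}_V,a^{0,1}_V)$, the idempotent $a^{1,0}_{ii}$ being homotopic to the identity by the non-degeneracy condition --- and where $f$ is the morphism $M|_{U\cap V}\to N|_{U\cap V}$ induced by $a^{1,0}_{VU}$. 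The first few Maurer--Cartan equations \eqref{equation: MC for twisted complex explicit} say precisely that $f$ is closed of degree $0$ and that $a^{1,0}_{UV}$ is a two-sided homotopy inverse of it, so $f$ is invertible in $H^0$, as Definition \ref{defi: homotopy fiber product} requires. On morphisms, $F$ sends $\phi=\sum_{k\ge 0}\phi^{k,\bullet}$ to the triple formed by its restrictions to $U$ and to $V$ and its component on the double intersection $U\cap V$; compatibility with $d\phi=\delta\phi+b\cdot\phi-(-1)^{|\phi|}\phi\cdot a$ is a direct check.

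For quasi-full faithfulness I would compare morphism complexes. The complex $\text{Tw}_{\text{perf}}(X)(\mathcal{E},\mathcal{E}')$ is the twisted \v{C}ech complex $C^{\bullet}(\{U,V\},\text{Hom}^{\bullet}(E,E'))$, and the morphism complex of the homotopy fiber product (Definition \ref{defi: homotopy fiber product}) is exactly its two-term, non-degenerate subcomplex $\text{Hom}^{\bullet}_U(M,M')\oplus\text{Hom}^{\bullet}_V(N,N')\oplus\text{Hom}^{\bullet-1}_{U\cap V}(M|_{U\cap V},N'|_{U\cap V})$, with the same twisted differential; the extra pieces of the former come from the repeated multi-indices $U\cap V=U_{UVU}=U_{VUV}=\cdots$. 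I would show the inclusion of the latter into the former is a quasi-isomorphism, which is the ``normalized versus unnormalized \v{C}ech'' comparison and follows from the same acyclicity of the modified \v{C}ech differential $\delta$ that underlies Proposition \ref{prop: ST and id are quasi-isomorphic}. (Conceptually both sides compute $R\Gamma$ of $R\mathcal{H}om(\mathcal{S}\mathcal{E},\mathcal{S}\mathcal{E}')$, the right-hand one by Mayer--Vietoris and the affineness of $U$, $V$, $U\cap V$, the left-hand one by the local description of Proposition \ref{prop: associated sheaf locall isom to original twisted complex}.)

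For quasi-essential surjectivity I would take an object $(M,N,f)$ of the homotopy fiber product --- $M$, $N$ strictly perfect on $U$, $V$ and $f\colon M|_{U\cap V}\to N|_{U\cap V}$ a quasi-isomorphism --- and build a twisted perfect complex $\mathcal{E}$ on $X$ for $\{U,V\}$ with $F(\mathcal{E})\cong(M,N,f)$. Since $U\cap V$ is affine, Lemma \ref{lemma: homotopy inverse on good covers} gives a chain map $g\colon N|_{U\cap V}\to M|_{U\cap V}$ and homotopies realizing $gf\simeq\text{id}$ and $fg\simeq\text{id}$; I would put $E^{\bullet}_U=M$, $E^{\bullet}_V=N$, take $a^{0,1}$ to be the two differentials, $a^{1,0}_{UU}=a^{1,0}_{VV}=\text{id}$, $a^{1,0}_{VU}=f$, $a^{1,0}_{UV}=g$, let the relevant $a^{2,-1}$ components be the chosen homotopies up to sign, and construct the remaining repeated-index components $a^{k,1-k}$ by induction on the \v{C}ech degree, exactly as in the proof of Proposition \ref{prop: existence of resolution}: at each stage the obstruction is a closed element of a \v{C}ech--$\text{Hom}$ complex that is acyclic because every finite intersection of $\{U,V\}$ is affine. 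An alternative is to pass to homotopy categories and identify $H^0F$, via Theorem \ref{thm: dg-enhancement} applied to $X$, $U$, $V$, $U\cap V$ and Tabuada's description \cite{tabuada2010homotopy} of the homotopy category of a homotopy fiber product, with the gluing functor $D_{\text{perf}}(X)\to D_{\text{perf}}(U)\times^h_{D_{\text{perf}}(U\cap V)}D_{\text{perf}}(V)$, which is essentially surjective by Zariski descent for perfect complexes.

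The hard part will be the essential-surjectivity step: one has to run the inductive construction of the higher repeated-index components $a^{k,1-k}$ so that all the obstruction classes genuinely vanish, and this is precisely where the affineness (p-goodness) of the intersections is used. The quasi-full-faithfulness step, which reconciles the infinitely many repeated-index pieces of a twisted complex with the finite datum of Definition \ref{defi: homotopy fiber product} through the normalized/unnormalized \v{C}ech comparison for the modified differential $\delta$, is the conceptual core but becomes routine once Proposition \ref{prop: ST and id are quasi-isomorphic} is available.
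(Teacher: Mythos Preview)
Your proposal is correct and follows the same skeleton as the paper: construct the restriction dg-functor and verify it is a quasi-equivalence. The paper's own proof is extremely terse, and your version actually supplies more detail.

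There is one interesting inversion worth noting. The paper declares essential surjectivity ``clear'' and says quasi-full-faithfulness follows ``by the same method as in the proof of Proposition~\ref{prop: existence of resolution}''. You do the opposite: you use the inductive obstruction-killing argument (\`a la Proposition~\ref{prop: existence of resolution}) for essential surjectivity --- building the higher repeated-index components $a^{k,1-k}$ out of $(M,N,f)$ --- and you handle quasi-full-faithfulness by a normalized/unordered \v{C}ech comparison on the Hom complexes. Both allocations are legitimate, since both steps ultimately rest on the same acyclicity on affine intersections (Lemma~\ref{lemma: acyclic of complex on good covers}). Your assessment that essential surjectivity is the substantive step is arguably more honest than the paper's ``clear'': one really does have to manufacture a homotopy inverse $g=a^{1,0}_{UV}$, the homotopies $a^{2,-1}$, and all higher terms, exactly as you outline.

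One small point to watch in your \v{C}ech comparison for quasi-full-faithfulness: the morphism complex of the homotopy fiber product retains only the single cross component indexed by $VU$ (not $UV$), so it is the \emph{ordered} (alternating) \v{C}ech complex for the two-element cover, not merely the complex with degeneracies suppressed. The comparison with the full unordered complex (all words in $U,V$) is still standard, but make sure the filtration/spectral-sequence argument you have in mind is the one comparing ordered with unordered \v{C}ech, and that the twisting by $a$ and $b$ is handled by filtering by \v{C}ech degree first. Once that is said, your sketch is complete.
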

\begin{proof}
The main part of the proof is to untangle the definition of homotopy fiber product. Let $\mathcal{E}$ be an object in $\text{Tw}_{\text{perf}}(X)$. Then it gives $E^{\bullet}_U$ on $U $ and $E^{\bullet}_V$ on $V$. 

It is clear  $E^{\bullet}_U$  together with  $a^{0,1}_U$  give an object in $\text{Tw}_{\text{perf}}(U)$ and we denote it by $\mathcal{M}$. Similarly $E^{\bullet}_V$ together with $a^{0,1}_V$ give an object $\mathcal{N}$ in $\text{Tw}_{\text{perf}}(V)$. Moreover the map $a^{1,0}_{VU}$ gives the morphism
$$
f: \mathcal{M}\to \mathcal{N}
$$
and it is homotopic invertible since the $a$'s satisfies the Maurer-Cartan equation and has the non-degenerate property.

Hence we get a dg-functor
$$
R: \text{Tw}_{\text{perf}}(X)\to \text{Tw}_{\text{perf}}(U)\times^h_{\text{Tw}_{\text{perf}}(U\cap V)}\text{Tw}_{\text{perf}}(V).
$$
It is clear that $R$ is essentially surjective. By the same method as in the proof of Proposition \ref{prop: existence of resolution} we can prove it is also quasi-fully faithful.
\qed \end{proof}

\begin{remark}
The same idea works for the general case where $U$ and $V$ are not affine. Nevertheless we need an explicit construction of homotopy limit of dg-categories and this topic will be treated in another paper.
\end{remark}

\section{FURTHER TOPICS}\label{section: further topics}

\subsection{Twisted coherent complexes}\label{subsection: twisted coherent complexes}
In this subsection we consider a variation of twisted perfect complex, where the two-side bounded complexes are replaced by bounded above complexes. We omit most of the proofs since they are the same as the corresponding proofs for the twisted perfect complexes.

\subsubsection{The derived category of bounded above coherent complexes}

First we review the relevant derived categories. We  have a definition of coherent complex.

\begin{definition}\label{defi: coherent complex and strictly coherent complex}
Let $(X,\mathcal{O}_X)$ be a separated, Noetherian scheme. A complex $\mathcal{S}^{\bullet}$ of $\mathcal{O}_X$-modules is bounded above and coherent if for any point $x\in X$, there exists an open neighborhood $U$ of $x$ and a bounded above complex of finite rank, locally free sheaves  $\mathcal{E}^{\bullet}_{U}$ on $U$ such that the restriction $\mathcal{S}^{\bullet}|_U$ is isomorphic to $\mathcal{E}^{\bullet}_{U}$ in $D(\mathcal{O}_X|_U-\text{mod})$, the derived category of sheaves of $\mathcal{O}_X$-modules on $U$.
\end{definition}

\begin{remark}\label{remark: coherent and pseudo-coherent compare}
If $X$ is not a separated Noetherian scheme then the category of bounded above coherent complexes does not behave well. In fact a more standard notion is the \emph{pseudo-coherent} complex on a ringed space, see \cite{berthelot1966seminaire} Expos\'{e} I or \cite{thomason1990higher} Section 2. Nevertheless, pseudo-coherent coincides with our definition of coherent if $X$ is a Noetherian scheme as shown in Appendix \ref{appendix: quasi-coherent, pseudo-coherent complexes and coherent sheaves}. In this paper we will stick to the above definition of coherent complex.
\end{remark}

In this subsection we always assume $X$ is a separated Noetherian scheme.

We consider the following  categories.

\begin{definition}\label{defi: derived cat of coherent complex}
  Let $\text{Sh}^-_{\text{coh}}(X)$ be the full dg-subcategory of Sh$(X)$ which consists of bounded above coherent complexes on $X$.

Similarly we have $K^-_{\text{coh}}(X)$, $D^-_{\text{coh}}(X)$, $K^-_{\text{coh}}(\text{Qcoh}(X))$, and $D^-_{\text{coh}}(\text{Qcoh}(X))$
\end{definition}

\subsubsection{Twisted coherent complexes}
We have the following definition which is similar to Definition \ref{defi: twisted perfect complex}.
\begin{definition}\label{defi: twisted coherent complex}
A \emph{twisted coherent complex} $\mathcal{E}=(E^{\bullet},a)$ is the same as twisted complex except that $E^{\bullet}$ are   bounded above graded finitely generated locally free   $\mathcal{O}_X$-modules.

The twisted coherent complexes form a dg-category and we denote it by $\text{Tw}^-_{\text{coh}}(X,\mathcal{O}_X, \{U_i\})$ or simply $\text{Tw}^-_{\text{coh}}(X)$. Obviously $\text{Tw}^-_{\text{coh}}(X)$ is a full dg-subcategory of Tw$(X)$ while $\text{Tw}_{\text{perf}}(X)$ is a full dg-subcategory of $\text{Tw}^-_{\text{coh}}(X)$.
\end{definition}

The differential $\delta_a$, shift functor, mapping cone and weak equivalence as in Section \ref{subsection: pre-triangulated structure} and \ref{subsection: weak equivalence} can be defined on $\text{Tw}^-_{\text{coh}}(X)$ without any change. Moreover we have the same result as in Proposition \ref{prop: homotopy invertible morphisms}

\begin{proposition}\label{prop: homotopy invertible morphisms for coherent}
Let the cover $\{U_i\}$  satisfy $H^k(U_i,\mathcal{F})=0$ for any $i$ , any quasi-coherent sheaf $\mathcal{F}$ on $U_i$ and any $k\geq 0$. If $\mathcal{E}$ and $\mathcal{F}$ are both in the subcategory $\text{Tw}^-_{\text{coh}}(X)$, then a closed degree zero morphism $\phi$ between twisted complexes $\mathcal{E}$ and $\mathcal{F}$ is  a weak equivalence  if and only if $\phi$ is invertible in the homotopy category $\text{HoTw}_{\text{coh}}(X)$.
\end{proposition}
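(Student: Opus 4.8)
The plan is to mimic the proof of Proposition~\ref{prop: homotopy invertible morphisms} almost verbatim; the only thing to verify is that replacing two-sided bounded complexes by bounded above complexes does not break any step, and indeed both Lemma~\ref{lemma: acyclic of complex on good covers} and Lemma~\ref{lemma: homotopy inverse on good covers} are stated for bounded above complexes of finitely generated locally free sheaves.

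One implication is formal. If $\phi$ is invertible in $\text{HoTw}^-_{\text{coh}}(X)$ with homotopy inverse $\psi$, then passing to $(0,0)$-components (which is compatible with composition and carries homotopy operators to homotopy operators) shows that $\psi^{0,0}_i$ is a homotopy inverse of $\phi^{0,0}_i$ in $K(U_i)$ for every $i$; in particular each $\phi^{0,0}_i$ is a quasi-isomorphism, so $\phi$ is a weak equivalence.

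For the converse, assume $\phi$ is a weak equivalence, so each $\phi^{0,0}_i\colon E^\bullet_i\to F^\bullet_i$ is a quasi-isomorphism of bounded above complexes of finitely generated locally free sheaves on $U_i$. Applying Lemma~\ref{lemma: homotopy inverse on good covers} with the locally free complex $F^\bullet_i$, taking $r=\mathrm{id}_{F^\bullet_i}$ and $s=\phi^{0,0}_i$, yields $\psi^{0,0}_i\colon F^\bullet_i\to E^\bullet_i$ with $\phi^{0,0}_i\circ\psi^{0,0}_i$ homotopic to $\mathrm{id}_{F^\bullet_i}$; then $\psi^{0,0}_i$ is also a quasi-isomorphism, and a second application of the lemma (with $E^\bullet_i$ as the locally free complex, $r=\mathrm{id}_{E^\bullet_i}$, $s=\psi^{0,0}_i$) together with a formal manipulation in $K(U_i)$ promotes $\psi^{0,0}_i$ to a two-sided homotopy inverse of $\phi^{0,0}_i$.

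Finally one extends $\{\psi^{0,0}_i\}$ to a closed degree zero morphism $\psi=\sum_{p\geq 0}\psi^{p,-p}\colon\mathcal{F}\to\mathcal{E}$ in $\text{Tw}^-_{\text{coh}}(X)$, together with homotopies realizing $\phi\psi\simeq\mathrm{id}$ and $\psi\phi\simeq\mathrm{id}$, by induction on the \v{C}ech degree $p$. At the $p$-th stage the cocycle and homotopy equations, after rearranging exactly as in the proof of Proposition~\ref{prop: homotopy inverse for twisted complexes}, say that a prescribed closed element of a Hom complex $\text{Hom}^\bullet(F^\bullet_{i_p},-)$ into an acyclic complex (or into a mapping cone that is acyclic because the relevant $\phi^{0,0}$ is a quasi-isomorphism) is a coboundary; this is solvable since that Hom complex is acyclic by Lemma~\ref{lemma: acyclic of complex on good covers}, which applies because $F^\bullet_{i_p}$ is bounded above and finitely generated locally free. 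Local finiteness of $\mathcal{U}$ ensures the sum over $p$ is locally finite, so $\psi$ is a genuine morphism of $\text{Tw}^-_{\text{coh}}(X)$. This is the same spectral sequence/induction used in Proposition~\ref{prop: homotopy invertible morphisms}, and the one step that genuinely uses the hypotheses on $\mathcal{U}$ and the boundedness-above of the complexes is this acyclicity input; everything else is sign bookkeeping.
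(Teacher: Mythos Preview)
Your proposal is correct and follows exactly the paper's approach: the paper's proof is a one-liner observing that the proof of Proposition~\ref{prop: homotopy invertible morphisms} never uses two-sided boundedness (since Lemmas~\ref{lemma: acyclic of complex on good covers} and~\ref{lemma: homotopy inverse on good covers} are already stated for bounded above complexes), and you have simply unpacked that proof. Your remark about local finiteness of $\mathcal{U}$ making $\psi$ well-defined is unnecessary, since morphisms in $\text{Tw}(X)$ are defined via the product $\prod_{p,q}C^p(\mathcal{U},\text{Hom}^q(E,F))$ and no convergence issue arises, but this is harmless.
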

\begin{proof}
Notice that in the proof of Proposition \ref{prop: homotopy invertible morphisms} we do not use the boundedness of the complexes hence the same proof works for $\text{HoTw}_{\text{coh}}(X)$.
\qed \end{proof}

\subsubsection{The sheafification functor on twisted coherent complexes}
We wish to restrict the sheafification functor in  Definition \ref{defi: sheafification functor} to twisted coherent complexes and get a dg-functor
$$
\mathcal{S}: \text{Tw}^-_{\text{coh}}(X)\to \text{Sh}(X).
$$
and  we want to a result which is similar to Remark \ref{remark: sheafification of twisted perfect complex is quasi-coherent}, i.e. we want the dg-functor $\mathcal{S}$  maps $\text{Tw}^-_{\text{coh}}(X)$ to complexes of quasi-coherent sheaves. However there is a serious problem here. Recall the in Equation \eqref{equation: sheafification functor: alternative} of the definition of $\mathcal{S}$ we have
$$
\mathcal{S}^n(\mathcal{E})=\prod_{p+q=n}E^q_{i_0}|_{U_{i_0\ldots i_p}}.
$$
Now $\mathcal{E}=(E^{\bullet}_i,a)$ is a twisted coherent complex hence $E^{\bullet}_i$ is bounded above. Therefore $\prod_{p+q=n}E^q_{i_0}|_{U_{i_0\ldots i_p}}$ is an infinite product. The problem is that the category Qcoh$(X)$ does not have infinite direct products for general $X$, and even when it has, the infinite direct product in Qcoh$(X)$ is not the same as the product in the larger category Sh$(X)$.

To solve this problem we have to slightly modify the definition of $\mathcal{S}$. First we introduce the following definition.

\begin{definition}\label{defi: twisted q-coherent complex}
A \emph{twisted quasi-coherent complex} $\mathcal{E}=(E^{\bullet},a)$ is the same as twisted complex except that $E^{\bullet}$ are     graded quasi-coherent  $\mathcal{O}_X$-modules.

The twisted quasi-coherent complexes form a dg-category and we denote it by $\text{Tw}_{\text{qcoh}}(X,\mathcal{O}_X, \{U_i\})$ or simply $\text{Tw}_{\text{qcoh}}(X)$. Obviously $\text{Tw}_{\text{qcoh}}(X)$ is a full dg-subcategory of Tw$(X)$ while $\text{Tw}_{\text{perf}}(X)$ and $\text{Tw}^-_{\text{coh}}(X)$ are full dg-subcategories of $\text{Tw}_{\text{qcoh}}(X)$.
\end{definition}

Before defining the sheafification functor we need the following lemma.

\begin{lemma}\label{lemma: infinite limit of quasi-coherent sheaves}
Let $X$ be a quasi-compact and quasi-separated scheme, then the category Qcoh$(X)$ has all limits.
\end{lemma}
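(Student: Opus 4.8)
The plan is to obtain limits in $\text{Qcoh}(X)$ by first taking limits in the ambient abelian category $\mathcal{O}_X\text{-Mod}$ of all sheaves of $\mathcal{O}_X$-modules and then correcting the result. First I would recall that $\mathcal{O}_X\text{-Mod}$ is complete: for a small diagram $\{\mathcal{F}_j\}_{j\in J}$ the presheaf $V\mapsto \lim_j \mathcal{F}_j(V)$ is already a sheaf (the sheaf condition is an equalizer, and limits commute with limits) and, with its evident module structure, is the limit of $\{\mathcal{F}_j\}$ in $\mathcal{O}_X\text{-Mod}$. As observed just before the statement, this sheaf need not be quasi-coherent even when all the $\mathcal{F}_j$ are, so in general the limit in $\text{Qcoh}(X)$ is \emph{not} the limit in $\mathcal{O}_X\text{-Mod}$ and must be produced differently.

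The crucial external input I would invoke is the \emph{coherator}: because $X$ is quasi-compact and quasi-separated, the fully faithful inclusion $i\colon \text{Qcoh}(X)\hookrightarrow \mathcal{O}_X\text{-Mod}$ admits a right adjoint $Q\colon \mathcal{O}_X\text{-Mod}\to \text{Qcoh}(X)$ (see \cite{thomason1990higher} Appendix B, or \cite{berthelot1966seminaire}). Granting this, for a small diagram $\{\mathcal{F}_j\}_{j\in J}$ in $\text{Qcoh}(X)$ I would claim that $Q\bigl(\lim_j i\mathcal{F}_j\bigr)$, with the inner limit taken in $\mathcal{O}_X\text{-Mod}$, is a limit of $\{\mathcal{F}_j\}$ in $\text{Qcoh}(X)$. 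The verification is purely formal: for $\mathcal{G}\in\text{Qcoh}(X)$ one has bijections, natural in $\mathcal{G}$,
\begin{align*}
\text{Hom}_{\text{Qcoh}(X)}\bigl(\mathcal{G},\,Q(\lim\nolimits_j i\mathcal{F}_j)\bigr)
&\cong \text{Hom}_{\mathcal{O}_X\text{-Mod}}\bigl(i\mathcal{G},\,\lim\nolimits_j i\mathcal{F}_j\bigr)\\
&\cong \lim\nolimits_j \text{Hom}_{\mathcal{O}_X\text{-Mod}}(i\mathcal{G},\,i\mathcal{F}_j)\\
&\cong \lim\nolimits_j \text{Hom}_{\text{Qcoh}(X)}(\mathcal{G},\,\mathcal{F}_j),
\end{align*}
using in turn the $(i,Q)$-adjunction, the universal property of the limit in $\mathcal{O}_X\text{-Mod}$, and the full faithfulness of $i$. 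Hence $Q(\lim_j i\mathcal{F}_j)$ represents the functor $\mathcal{G}\mapsto\lim_j\text{Hom}_{\text{Qcoh}(X)}(\mathcal{G},\mathcal{F}_j)$ on $\text{Qcoh}(X)$, which is exactly to say that it is $\lim_j\mathcal{F}_j$ in $\text{Qcoh}(X)$; since $J$ was an arbitrary small index category, $\text{Qcoh}(X)$ has all small limits.

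The only genuinely non-formal step is the existence of the coherator $Q$, which really does use quasi-compactness and quasi-separatedness of $X$; everything else is routine manipulation of adjoints, so this is where I expect the weight of the argument to sit. An alternative I would mention is to cite that $\text{Qcoh}(X)$ is a Grothendieck abelian category for such $X$ and that every Grothendieck abelian category is complete (see, e.g., \cite{stacks-project}); I would nonetheless prefer the coherator argument, since it exhibits $\lim_j\mathcal{F}_j$ concretely as $Q$ applied to a sheaf-theoretic limit, which is precisely the form that will be needed below when repairing the definition of $\mathcal{S}$ on $\text{Tw}^-_{\text{coh}}(X)$.
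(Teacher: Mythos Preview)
Your argument is correct, and it is essentially the content of the reference the paper cites: the paper's own proof is simply ``See \cite{thomason1990higher} Lemma B.12,'' and that lemma is proved exactly by constructing the coherator $Q$ (the right adjoint to the inclusion $\text{Qcoh}(X)\hookrightarrow \mathcal{O}_X\text{-Mod}$, available for $X$ quasi-compact and quasi-separated) and then applying it to limits taken in $\mathcal{O}_X\text{-Mod}$. So you have unpacked the cited argument rather than taken a different route.
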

\begin{proof}
See \cite{thomason1990higher} Lemma B.12.
\qed \end{proof}

Then we define the sheafification functor  in a slightly modified way and we will call it $\widetilde{\mathcal{S}}$.
\begin{definition}\label{defi: sheafification functor for q-coherent}
Let $X$ be  a separated Noetherian (hence  quasi-compact and quasi-separated) scheme. The definition of $\widetilde{\mathcal{S}}: \text{Tw}_{\text{qcoh}}(X)\to \text{Qcoh}(X)$ is the same as that of $\mathcal{S}$ in Definition \ref{defi: sheafification functor} except that in the equation
$$
\widetilde{\mathcal{S}}^n(\mathcal{E})=\prod_{p+q=n}E^q_{i_0}|_{U_{i_0\ldots i_p}}
$$
we take the direct  product in Qcoh$(X)$. By Lemma \ref{lemma: infinite limit of quasi-coherent sheaves}, $\widetilde{\mathcal{S}}$ is well-defined.
\end{definition}

\begin{remark}
 $\widetilde{\mathcal{S}}$ coincides with $\mathcal{S}$ if restricted to $\text{Tw}_{\text{perf}}(X)$ since in this case the product $\prod_{p+q=n}E^q_{i_0}|_{U_{i_0\ldots i_p}}$ is finite and the product in Qcoh$(X)$ coincides with that in Sh$(X)$.
\end{remark}

Keep in mind that Proposition \ref{prop: associated sheaf locall isom to original twisted complex} works for any twisted complexes, hence it works for twisted coherent complexes. Moreover we also have the same result as in Corollary  \ref{coro: perfectness of sheafification}
\begin{proposition}\label{prop: coherence of sheafification}
If $\mathcal{E}=(E^{\bullet},a)$ is a twisted coherent complex, then the sheafification $\widetilde{\mathcal{S}}^{\bullet}(\mathcal{E})$ is a  coherent  complex of sheaves on $(X,\mathcal{O}_X)$. In other words the sheafification functor $\widetilde{\mathcal{S}}$ restricts to   $\text{Tw}^-_{\text{coh}}(X)$  and gives the following dg-functor
$$
\widetilde{\mathcal{S}}: \text{Tw}^-_{\text{coh}}(X)\to \text{Qcoh}^-_{\text{coh}}(X).
$$
\end{proposition}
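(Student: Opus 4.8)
The plan is to reproduce the proof of Corollary~\ref{coro: perfectness of sheafification}, whose sole ingredient was the local structure theorem, Proposition~\ref{prop: associated sheaf locall isom to original twisted complex}. Thus the goal is the analogous local property for $\widetilde{\mathcal{S}}$: for every member $U_j$ of the cover, $\widetilde{\mathcal{S}}^{\bullet}(\mathcal{E})|_{U_j}$ should be isomorphic in $D(\mathcal{O}_X|_{U_j}\text{-mod})$ to $(E^{\bullet}_j, a^{0,1}_j)$, which is a bounded above complex of finite rank locally free $\mathcal{O}_{U_j}$-modules because $\mathcal{E}$ is a twisted \emph{coherent} complex. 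Granting this for all $j$, Definition~\ref{defi: coherent complex and strictly coherent complex} gives at once that $\widetilde{\mathcal{S}}^{\bullet}(\mathcal{E})$ is a bounded above coherent complex; together with Definition~\ref{defi: sheafification functor for q-coherent}, which already places $\widetilde{\mathcal{S}}(\mathcal{E})$ inside $\text{Qcoh}(X)$, this yields the asserted dg-functor $\widetilde{\mathcal{S}}\colon\text{Tw}^-_{\text{coh}}(X)\to\text{Qcoh}^-_{\text{coh}}(X)$ (that $\widetilde{\mathcal{S}}$ is a dg-functor at all is built into Definition~\ref{defi: sheafification functor for q-coherent}).

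The one genuinely new point, compared with the twisted perfect case, is that $\widetilde{\mathcal{S}}^n(\mathcal{E})=\prod_{p+q=n}E^q_{i_0}|_{U_{i_0\ldots i_p}}$ is now an \emph{infinite} product, formed in $\text{Qcoh}(X)$, and this product is neither the product in $\text{Sh}(X)$ nor compatible with restriction to an open subscheme; so the explicit contracting homotopy built in the proof of Proposition~\ref{prop: associated sheaf locall isom to original twisted complex} cannot simply be transported to $\widetilde{\mathcal{S}}$ on $U_j$. The way around this is to compare $\widetilde{\mathcal{S}}$ with the naive sheafification $\mathcal{S}$ of Definition~\ref{defi: sheafification functor}. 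For $\mathcal{S}$, Proposition~\ref{prop: associated sheaf locall isom to original twisted complex} applies verbatim (it holds for \emph{any} twisted complex), so $\mathcal{S}^{\bullet}(\mathcal{E})|_{U_j}$ is chain homotopy equivalent to $(E^{\bullet}_j,a^{0,1}_j)$ and $\mathcal{S}^{\bullet}(\mathcal{E})$ is a bounded above coherent complex of $\mathcal{O}_X$-modules. It then suffices to show that the natural map from the $\text{Qcoh}(X)$-product to the $\text{Sh}(X)$-product, i.e. $\widetilde{\mathcal{S}}^{\bullet}(\mathcal{E})\to\mathcal{S}^{\bullet}(\mathcal{E})$, is a quasi-isomorphism. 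Since $X$ is Noetherian, hence quasi-compact, the locally finite cover $\{U_i\}$ is finite, so each \v{C}ech term $C^p(\mathcal{U},E^q)$ is a \emph{finite} product of quasi-coherent sheaves, on which the $\text{Qcoh}$- and $\text{Sh}$-products coincide and commute with restriction. Filtering both complexes by \v{C}ech degree, with $F^p$ the subcomplex of \v{C}ech degree $\geq p$, the successive quotients $F^p/F^{p+1}$ are finite products of such terms, so on them $\widetilde{\mathcal{S}}$ and $\mathcal{S}$ literally coincide and the comparison map is an isomorphism; the passage to the inverse limit $\widetilde{\mathcal{S}}^{\bullet}(\mathcal{E})=\varprojlim_p\widetilde{\mathcal{S}}^{\bullet}(\mathcal{E})/F^{p}$ is harmless because in each fixed total degree only finitely many \v{C}ech/internal bidegrees contribute above the vanishing range of $E^{\bullet}$, so the relevant $\varprojlim^1$ terms vanish.

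The main obstacle, then, is precisely this comparison — that trading the $\text{Sh}$-product for the $\text{Qcoh}$-product does not alter the quasi-isomorphism type of $\widetilde{\mathcal{S}}^{\bullet}(\mathcal{E})$; everything else is formally the argument of Corollary~\ref{coro: perfectness of sheafification}, together with the \v{C}ech-acyclicity observation already used in Proposition~\ref{prop: ST and id are quasi-isomorphic}. A more conceptual (though heavier) route would phrase the comparison through the derived quasi-coherator $RQ_X$: since $\mathcal{S}^{\bullet}(\mathcal{E})$ is coherent it lies in $D_{\mathrm{qc}}(X)$, where $RQ_X$ is an equivalence onto its image, and one is reduced to checking that the termwise quasi-coherator applied to $\mathcal{S}^{\bullet}(\mathcal{E})$ computes $RQ_X$ — which again comes down to the finiteness of the cover and the \v{C}ech filtration.
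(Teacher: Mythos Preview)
The paper's proof is a single sentence: ``the proof is the same as that of Corollary~\ref{coro: perfectness of sheafification}.'' In other words, it simply invokes Proposition~\ref{prop: associated sheaf locall isom to original twisted complex} and reads off that each $\widetilde{\mathcal{S}}^{\bullet}(\mathcal{E})|_{U_j}$ is locally quasi-isomorphic to the bounded-above complex of finite locally free sheaves $(E^{\bullet}_j,a^{0,1}_j)$. You take a genuinely different and more scrupulous route: you notice that Proposition~\ref{prop: associated sheaf locall isom to original twisted complex} is proved for $\mathcal{S}$, not for $\widetilde{\mathcal{S}}$, and that the two differ because $\widetilde{\mathcal{S}}$ uses the product in $\text{Qcoh}(X)$, which for infinite products neither agrees with the $\text{Sh}(X)$-product nor commutes with restriction. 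The paper silently passes over this point; you try to bridge it by comparing $\widetilde{\mathcal{S}}$ to $\mathcal{S}$ and transporting the known local result across.

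Your observation that on a Noetherian (hence quasi-compact) $X$ the locally finite cover is finite is correct and useful, making each $C^p(\mathcal{U},E^q)$ a finite product. But your $\varprojlim^1$ step does not go through as stated: with $E^{\bullet}$ only bounded \emph{above} (say $E^q=0$ for $q>M$), in fixed total degree $n$ the \v{C}ech degree $p=n-q$ ranges over all of $[\max(0,n-M),\infty)$, so infinitely many bidegrees contribute and the filtration $\{F^p\}$ is not eventually constant in degree $n$; your ``finitely many bidegrees'' claim is false in exactly the coherent (non-perfect) situation. The derived-coherator alternative you sketch is sounder in spirit, though it still hides a check that $\mathcal{S}^n(\mathcal{E})$ is $Q$-acyclic. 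The most economical repair --- and presumably what the paper intends by ``same proof'' --- is to note that the maps $f$, $g$, $h$ in the proof of Proposition~\ref{prop: associated sheaf locall isom to original twisted complex} are all given componentwise, hence exist for the $\text{Qcoh}$-product by its universal property, and the identities $f\circ g=a^{1,0}_{jj}$ and $g\circ f-\mathrm{id}=\delta_a h+h\delta_a$ hold because they hold after every projection; this sidesteps the comparison entirely.
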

\begin{proof}
The proof is the same as that of Corollary \ref{coro: perfectness of sheafification}.
\qed \end{proof}

\subsubsection{The essential surjectivity in the coherent case}
Similar to the discussion in Section \ref{subsection: essentially surjective}, the dg-functor
$$
\widetilde{\mathcal{S}}: \text{Tw}^-_{\text{coh}}(X)\to \text{Qcoh}^-_{\text{coh}}(X)
$$
 induces a functor
$$
\widetilde{\mathcal{S}}: Ho\text{Tw}^-_{\text{coh}}(X)\to D^-_{\text{coh}}(\text{Qcoh}(X)).
$$
In this subsection we will show that this functor is essentially surjective under some mild condition. Moreover we will show that the functor
$$
\widetilde{\mathcal{S}}: Ho\text{Tw}^-_{\text{coh}}(X)\to D^-_{\text{coh}}(X)
$$
is essentially surjective under some additional  conditions.

First we have the following definitions which are similar to Definition \ref{defi: good space} and \ref{defi: good covers}.
\begin{definition}\label{defi: c-good space}
 A locally ringed space $(U,\mathcal{O}_U)$ is called \emph{c-good} if it satisfies
 \begin{enumerate}[a.]
\item For every coherent complex $\mathcal{C}^{\bullet}$ on $U$ which consists of quasi-coherent sheaves, there exists a   bounded above complex of finitely generated locally free sheaves $\mathcal{E}^{\bullet}$ together with a quasi-isomorphism $v: \mathcal{E}^{\bullet}\overset{\sim}{\to}\mathcal{C}^{\bullet}$.

\item The higher cohomologies of quasi-coherent sheaves vanish, i.e. $H^k(U,\mathcal{F})=0$ for any quasi-coherent sheaf $\mathcal{F}$ on $U$ and any $k\geq 1$.
\end{enumerate}
\end{definition}

\begin{remark}
The letter "c" in the term "p-good space' stands for "coherent".
\end{remark}

\begin{definition}\label{defi: c-good covers}
Let $(X,\mathcal{O}_X)$ be a locally ringed space, an open cover $\{U_i\}$ of $X$ is called a \emph{c-good cover}   if $(U_I,\mathcal{O}_X|_{U_I})$ is a c-good space for   any finite intersection $U_I$ of the open cover.
\end{definition}

For a separated, Noetherian scheme $(X,\mathcal{O}_X)$, any affine cover $\{U_i\}$ is c-good, see Appendix \ref{appendix: good covers}.

Then we have the coherent version of twisted resolution (Proposition \ref{prop: existence of resolution}).
\begin{proposition}\label{prop: existence of resolution for coherent complexes}
Assume the cover $\{U_i\}$ is   c-good. Let $P=(S^{\bullet},d_S)$ be a bounded above coherent complex which consists of quasi-coherent modules, then there exists a  twisted coherent complex  $\mathcal{E}$ together with a weak equivalence
$$
\phi: \mathcal{E}\overset{\sim}{\to} \mathcal{T}(P).
$$
\end{proposition}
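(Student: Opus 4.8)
The plan is to follow the proof of Proposition \ref{prop: existence of resolution} essentially word for word, the only change being that everywhere the two-side bounded strictly perfect complexes are replaced by bounded above complexes of finitely generated locally free sheaves and the p-good hypothesis is replaced by the c-good hypothesis. First, since $\{U_i\}$ is c-good, on each $U_i$ I would pick a bounded above complex of finitely generated locally free sheaves $E^{\bullet}_i$ together with a quasi-isomorphism $\phi^{0,0}_i\colon E^{\bullet}_i\overset{\sim}{\to}S^{\bullet}|_{U_i}$, and write $a^{0,1}_i$ for the differential of $E^{\bullet}_i$. Let $L^{\bullet}_i$ be the mapping cone of $\phi^{0,0}_i$, so that $L^n_i=E^{n+1}_i\oplus S^n_i$ and $L^{\bullet}_i$ is an acyclic complex of quasi-coherent sheaves on $U_i$ (acyclic because $\phi^{0,0}_i$ is a quasi-isomorphism).

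Next I would build, by induction on the \v{C}ech degree $k$, maps $A^{k,1-k}\in C^k(\mathcal{U},\text{Hom}^{1-k}(L,L))$ turning $(L,A)$ into a twisted complex that is simultaneously the mapping cone of a closed degree zero morphism $\phi\colon (E^{\bullet}_i,a)\to\mathcal{T}(P)$ extending the $\phi^{0,0}_i$; concretely, I require $A^{0,1}_i$ to be the cone differential above, $A^{1,0}_{ij}$ to be lower triangular with the identity of $S^{\bullet}|_{U_{ij}}$ in its bottom-right corner, and $A^{k,1-k}_{i_0\ldots i_k}$ for $k\geq 2$ to vanish on the $S^{\bullet}$-summand of its source, i.e.\ to lie in the subcomplex $\text{Hom}^{\bullet}(E^{\bullet+1}_{i_k},L^{\bullet}_{i_0})$. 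The $k=1$ case is produced by Lemma \ref{lemma: homotopy inverse on good covers} applied on $U_{ij}$ with $r=s=\phi^{0,0}$, which yields $a^{1,0}_{ij}$ together with the off-diagonal homotopy $\phi^{1,-1}_{ij}$; taking $i=j$ in this step also shows that $a^{1,0}_{ii}$ is chain homotopic to the identity (using that $\phi^{0,0}_i$ is a quasi-isomorphism and that the relevant Hom complex is acyclic), so the resulting $\mathcal{E}=(E^{\bullet}_i,a)$ really is a twisted coherent complex in the sense of Definition \ref{defi: twisted coherent complex}, the non-degeneracy condition being satisfied. For $k\geq 2$ the Maurer--Cartan obstruction $[\delta A^{k-1,2-k}+\sum_{l=1}^{k-1}A^{l,1-l}\cdot A^{k-l,1+l-k}]_{i_0\ldots i_k}$ is, by the induction hypothesis, a cocycle of $\text{Hom}^{\bullet}(E^{\bullet+1}_{i_k},L^{\bullet}_{i_0})$; since $E^{\bullet}_{i_k}$ is a bounded above complex of finitely generated locally free sheaves and $L^{\bullet}_{i_0}$ is an acyclic complex of quasi-coherent sheaves on the c-good set $U_{i_0\ldots i_k}$, Lemma \ref{lemma: acyclic of complex on good covers} makes that Hom complex acyclic, so the cocycle bounds and $A^{k,1-k}_{i_0\ldots i_k}$ can be chosen with the required block shape.

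Finally, because every $\phi^{0,0}_i$ is a quasi-isomorphism by construction, the morphism $\phi\colon\mathcal{E}\to\mathcal{T}(P)$ is a weak equivalence in the sense of Definition \ref{defi: quasi-isomorphism in Tw}, which is exactly what we want. The point that makes the argument go through with no genuinely new input is that Lemma \ref{lemma: acyclic of complex on good covers} is already stated for bounded above --- not merely two-side bounded --- complexes of finitely generated locally free sheaves; the only real work is the (tedious but routine) bookkeeping verifying that the block-triangular shape constraints on the $A^{k,1-k}$ are preserved by the induction and that the obstruction terms are indeed $D$-cocycles, exactly as in the proof of Proposition \ref{prop: existence of resolution}, and this bookkeeping is where the bulk of the (omitted) effort lies.
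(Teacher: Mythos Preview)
Your proposal is correct and follows exactly the approach the paper takes: the paper's own proof of this proposition simply says ``The proof is the same as that of Proposition \ref{prop: existence of resolution},'' and you have carried out precisely that substitution (bounded above locally free resolutions in place of strictly perfect ones, c-good in place of p-good), correctly noting that Lemma \ref{lemma: acyclic of complex on good covers} already covers the bounded above case.
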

\begin{proof}
The proof is the same as that of Proposition \ref{prop: existence of resolution}.
\qed \end{proof}

Hence we have the following essential surjectivity.
\begin{corollary}\label{coro: essentially surjective coherent complexes}
If the cover $\{U_i\}$ is c-good, then the sheafification functor
$$
\widetilde{\mathcal{S}}: \text{Tw}^-_{\text{coh}}(X)\to \text{Qcoh}^-_{\text{coh}}(X)
$$
induces an essentially surjective functor
$$
\widetilde{\mathcal{S}}: Ho\text{Tw}^-_{\text{coh}}(X)\to D^-_{\text{coh}}(\text{Qcoh}(X)).
$$
\end{corollary}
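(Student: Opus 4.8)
The plan is to run the argument of Corollary \ref{coro: essentially surjective} essentially verbatim, with $\mathcal{S}$ replaced by $\widetilde{\mathcal{S}}$, the category $\text{Tw}_{\text{perf}}(X)$ replaced by $\text{Tw}^-_{\text{coh}}(X)$, and Proposition \ref{prop: existence of resolution} replaced by its coherent analogue Proposition \ref{prop: existence of resolution for coherent complexes}. Concretely, let $P=(S^{\bullet},d)$ be an object of $\text{Qcoh}^-_{\text{coh}}(X)$ and form the associated twisted complex $\mathcal{T}(P)$, which lies in $\text{Tw}_{\text{qcoh}}(X)$ but need not be twisted coherent on the nose. Since $\{U_i\}$ is c-good, Proposition \ref{prop: existence of resolution for coherent complexes} furnishes a twisted coherent complex $\mathcal{E}$ together with a weak equivalence $\phi\colon \mathcal{E}\overset{\sim}{\to}\mathcal{T}(P)$.

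Next I would invoke the $\widetilde{\mathcal{S}}$-versions of the two inputs used in Corollary \ref{coro: essentially surjective}. First, the $\widetilde{\mathcal{S}}$-analogue of Corollary \ref{coro: weakly equi and sheafification}: a degree $0$ cocycle between twisted quasi-coherent complexes that is a weak equivalence is sent by $\widetilde{\mathcal{S}}$ to a quasi-isomorphism. This rests on the $\widetilde{\mathcal{S}}$-analogue of the local property Proposition \ref{prop: associated sheaf locall isom to original twisted complex}, namely that $\widetilde{\mathcal{S}}^{\bullet}(\mathcal{E})|_{U_j}$ is chain homotopy equivalent to $(E^{\bullet}_j,a^{0,1}_j)$ via the same explicit maps $f$, $g$, $h$ as before. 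Applied to $\phi$ this gives a quasi-isomorphism $\widetilde{\mathcal{S}}(\phi)\colon \widetilde{\mathcal{S}}(\mathcal{E})\overset{\sim}{\to}\widetilde{\mathcal{S}}\mathcal{T}(P)$. Second, the $\widetilde{\mathcal{S}}$-analogue of Proposition \ref{prop: ST and id are quasi-isomorphic}: the natural map $\tau_P\colon P\to\widetilde{\mathcal{S}}\mathcal{T}(P)$ is a quasi-isomorphism, since $\widetilde{\mathcal{S}}\mathcal{T}(P)$ is the total complex of the double complex attached to $\mathcal{T}(P)$ and its \v{C}ech direction is acyclic. Combining the two quasi-isomorphisms shows $\widetilde{\mathcal{S}}(\mathcal{E})$ is isomorphic to $P$ in $D^-_{\text{coh}}(\text{Qcoh}(X))$, so $\widetilde{\mathcal{S}}$ is essentially surjective.

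The step that requires genuine care, and is the only place where the argument is not word-for-word the perfect case, is precisely the passage from $\mathcal{S}$ to $\widetilde{\mathcal{S}}$: the product $\widetilde{\mathcal{S}}^n(\mathcal{E})=\prod_{p+q=n}E^q_{i_0}|_{U_{i_0\ldots i_p}}$ is now infinite and is formed in $\text{Qcoh}(X)$ rather than in $\text{Sh}(X)$, so a priori it differs from the product appearing in the definition of $\mathcal{S}$. One must therefore check that, on a separated Noetherian scheme, this $\text{Qcoh}$-product is compatible with restriction to the affine opens $U_I$ and that such products are exact there, so that the componentwise constructions of $f$, $g$, $h$ and of the \v{C}ech contracting homotopy survive passage to the $\text{Qcoh}$-product. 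This is where the Noetherian hypothesis and Lemma \ref{lemma: infinite limit of quasi-coherent sheaves} do the real work; once that compatibility is in hand, everything else is formal and identical to the proof of Corollary \ref{coro: essentially surjective}.
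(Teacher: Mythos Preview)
Your proposal is correct and follows exactly the route the paper takes: the paper's entire proof is the single sentence ``The proof is the same as that of Corollary \ref{coro: essentially surjective},'' and you have unwound precisely that argument with the appropriate substitutions. If anything, you are more careful than the paper, since you flag the one genuine subtlety---that $\widetilde{\mathcal{S}}$ uses the (possibly infinite) product in $\text{Qcoh}(X)$ rather than in $\text{Sh}(X)$, so the local splitting of Proposition \ref{prop: associated sheaf locall isom to original twisted complex} and the \v{C}ech acyclicity underlying Proposition \ref{prop: ST and id are quasi-isomorphic} need to be checked for this modified functor---whereas the paper simply asserts (just before Proposition \ref{prop: coherence of sheafification}) that the local property carries over without further comment.
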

\begin{proof}
The proof is the same as that of Corollary \ref{coro: essentially surjective}.
\qed \end{proof}

The essential surjectivity on arbitrary $\mathcal{O}_X$-modules involves the following definition.
\begin{definition}\label{defi: coherent-equivalent condition}
We say a locally ringed space $(X,\mathcal{O}_X)$ satisfies the \emph{coherent-equivalent condition} if the natural map
$$
D^-_{\text{coh}}(\text{Qcoh}(X))\to D^-_{\text{coh}}(X)
$$
is an equivalence.
\end{definition}

Actually we can show that any Noetherian scheme with finite Krull dimension satisfies the coherent-equivalent condition, see Appendix \ref{appendix: quasi-coherent, pseudo-coherent complexes and coherent sheaves} Corollary \ref{coro: spaces satisfy the coherent-equivalent condition}.

\begin{corollary} \label{coro: essentially surjective coherent complexes arbitrary modules}
If $X$ satisfies the coherent-equivalent  condition and the cover $\{U_i\}$ is c-good, then the functor
$$
\widetilde{\mathcal{S}}: Ho\text{Tw}^-_{\text{coh}}(X)\to D^-_{\text{coh}}(X)
$$
is essentially surjective.
\end{corollary}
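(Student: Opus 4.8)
The plan is to deduce this from the quasi-coherent version, Corollary \ref{coro: essentially surjective coherent complexes}, exactly as Corollary \ref{coro: essentially surjective on O-modules} was deduced from Corollary \ref{coro: essentially surjective} in the perfect case. First I would recall that, by Proposition \ref{prop: coherence of sheafification}, the functor $\widetilde{\mathcal{S}}$ of the statement factors as the composite
$$
Ho\text{Tw}^-_{\text{coh}}(X)\xrightarrow{\ \widetilde{\mathcal{S}}\ } D^-_{\text{coh}}(\text{Qcoh}(X))\longrightarrow D^-_{\text{coh}}(X),
$$
where the second arrow is the natural functor induced by the inclusion of complexes of quasi-coherent sheaves into complexes of $\mathcal{O}_X$-modules (this identification of the two functors is immediate from the construction of $\widetilde{\mathcal{S}}$ in Definition \ref{defi: sheafification functor for q-coherent}). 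So it suffices to show that this composite is essentially surjective.

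Next I would use the hypotheses. By Corollary \ref{coro: essentially surjective coherent complexes}, the cover $\{U_i\}$ being c-good makes the first arrow essentially surjective. By the coherent-equivalent condition (Definition \ref{defi: coherent-equivalent condition}), the second arrow is an equivalence of categories, in particular essentially surjective. A composite of two essentially surjective functors is essentially surjective, which gives the claim. Spelled out: given $P$ in $D^-_{\text{coh}}(X)$, the coherent-equivalent condition produces $Q$ in $D^-_{\text{coh}}(\text{Qcoh}(X))$ mapping to an object isomorphic to $P$; Corollary \ref{coro: essentially surjective coherent complexes} produces a twisted coherent complex $\mathcal{E}$ with $\widetilde{\mathcal{S}}(\mathcal{E})\cong Q$ in $D^-_{\text{coh}}(\text{Qcoh}(X))$; and transporting this isomorphism along the natural functor yields $\widetilde{\mathcal{S}}(\mathcal{E})\cong P$ in $D^-_{\text{coh}}(X)$.

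There is no real obstacle here: all the substantive work — the construction of twisted coherent resolutions via Proposition \ref{prop: existence of resolution for coherent complexes} and the comparison $P\simeq \widetilde{\mathcal{S}}\mathcal{T}(P)$ — is already contained in Corollary \ref{coro: essentially surjective coherent complexes}, and the coherent-equivalent condition is precisely the input needed to pass from complexes with quasi-coherent components to arbitrary complexes of $\mathcal{O}_X$-modules. The only mild point to keep straight is the verification that the two occurrences of the symbol $\widetilde{\mathcal{S}}$ (into $D^-_{\text{coh}}(\text{Qcoh}(X))$ and into $D^-_{\text{coh}}(X)$) are compatible with the natural comparison functor, which is routine from the definitions. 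Hence the proof is a one-line composition argument, and I would state it as such.
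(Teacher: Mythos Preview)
Your proposal is correct and matches the paper's own proof, which simply says the result is obvious from Corollary \ref{coro: essentially surjective coherent complexes} and Definition \ref{defi: coherent-equivalent condition}. You have spelled out in more detail the factorization and composition argument that the paper leaves implicit, but the approach is identical.
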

\begin{proof}
It is obvious from Corollary \ref{coro: essentially surjective coherent complexes} and Definition \ref{defi: coherent-equivalent condition}.
\qed \end{proof}

\subsubsection{The fully-faithfulness on coherent complexes}
\begin{proposition}\label{prop: invertible in hoTw and quasi-isomorphism for coherent}
Let the cover $\{U_i\}$ satisfy $H^k(U_i,\mathcal{O}_X|_{U_i})=0$ for any $i$ and any $k\geq 0$.  If $\mathcal{E}$ and $\mathcal{F}$ are both in the subcategory $\text{Tw}^-_{\text{coh}}(X)$, then $\widetilde{\mathcal{S}}(\phi): \widetilde{\mathcal{S}}(E)\to \widetilde{\mathcal{S}}(\mathcal{F})$ is a quasi-isomorphism if and only if    $\phi: \mathcal{E}\to \mathcal{F}$ is invertible in $Ho\text{Tw}^-_{\text{coh}}(X)$.
\end{proposition}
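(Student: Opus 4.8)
The plan is to follow the proof of Proposition~\ref{prop: invertible in hoTw and quasi-isomorphism} essentially verbatim, with $\text{Tw}_{\text{perf}}(X)$ replaced by $\text{Tw}^-_{\text{coh}}(X)$ and $\mathcal{S}$ replaced by $\widetilde{\mathcal{S}}$. First I would invoke Proposition~\ref{prop: homotopy invertible morphisms for coherent}, which under the stated cohomological vanishing hypothesis on the $U_i$ asserts that a closed degree-zero morphism $\phi\colon\mathcal{E}\to\mathcal{F}$ between objects of $\text{Tw}^-_{\text{coh}}(X)$ is invertible in $Ho\text{Tw}^-_{\text{coh}}(X)$ if and only if it is a weak equivalence, i.e. each $\phi^{0,0}_i\colon(E^\bullet_i,a^{0,1}_i)\to(F^\bullet_i,b^{0,1}_i)$ is a quasi-isomorphism on $U_i$. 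Thus it suffices to identify the weak equivalences with the morphisms $\phi$ for which $\widetilde{\mathcal{S}}(\phi)$ is a quasi-isomorphism, that is, to prove the $\widetilde{\mathcal{S}}$-version of Corollary~\ref{coro: weakly equi and sheafification}.

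For this I would first establish the $\widetilde{\mathcal{S}}$-analogue of the local property Proposition~\ref{prop: associated sheaf locall isom to original twisted complex}: for every member $U_j$ of the cover, $(\widetilde{\mathcal{S}}^\bullet(\mathcal{E}),\delta_a)|_{U_j}$ is chain homotopy equivalent to $(E^\bullet_j,a^{0,1}_j)$, naturally in $\mathcal{E}$. The maps used in the proof of Proposition~\ref{prop: associated sheaf locall isom to original twisted complex} --- the projection $f$ onto the $(0,n)$-term and then onto the $j$-summand, the map $g$ with $(i_0\ldots i_p)$-component $\epsilon^p\circ a^{p+1,-p}_{i_0\ldots i_pj}$, and the homotopy $h$ with $(hc)_{i_0\ldots i_k}=(-1)^k c_{i_0\ldots i_kj}$ --- are all specified componentwise, so they continue to make sense for $\widetilde{\mathcal{S}}$ through the universal property of the product in $\text{Qcoh}(X)$; and the identities $f\circ g=a^{1,0}_{jj}\simeq\mathrm{id}$ and $g\circ f-\mathrm{id}=\delta_a h+h\delta_a$ are formal consequences of the Maurer--Cartan equation and of the anticommutation relations of Lemma~\ref{lemma: epsilon anticommute with a}, which are insensitive to the ambient category. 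Granting this, functoriality of $f$ gives a square relating $\widetilde{\mathcal{S}}(\phi)|_{U_j}$ to $\phi^{0,0}_j$ through quasi-isomorphisms exactly as in Corollary~\ref{coro: weakly equi and sheafification}, and since a morphism of complexes of sheaves is a quasi-isomorphism if and only if its restriction to each $U_j$ is one, $\widetilde{\mathcal{S}}(\phi)$ is a quasi-isomorphism precisely when $\phi$ is a weak equivalence. Combined with the first paragraph this proves the proposition.

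The main obstacle is the point suppressed above: since $\widetilde{\mathcal{S}}$ takes products in $\text{Qcoh}(X)$ rather than products of sheaves, and restriction to an open subset need not commute with infinite products in $\text{Qcoh}(X)$, one must verify carefully that the componentwise formulas for $f$, $g$, $h$ do assemble into honest morphisms of complexes of $\mathcal{O}_X$-modules once the $\text{Qcoh}(X)$-products are formed, and that the homotopy identities survive. I would handle this by descending to the affine pieces $U_I$ of the cover, where $\text{Qcoh}$ is equivalent to modules over a Noetherian ring and the product is the module-theoretic product $\widetilde{\prod_\alpha M_\alpha}$, and checking the identities there; alternatively one can verify $g\circ f-\mathrm{id}-\delta_a h-h\delta_a=0$ component by component and then invoke the universal property of $\prod_{\text{Qcoh}}$ directly. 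As in the perfect case I would also record that both $\mathcal{E}$ and $\mathcal{F}$ must lie in $\text{Tw}^-_{\text{coh}}(X)$ for the argument, since Proposition~\ref{prop: homotopy invertible morphisms for coherent} is applied to each of them.
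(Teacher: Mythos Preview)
Your proposal is correct and follows the same approach as the paper's own proof, which simply states that the argument is identical to that of Proposition~\ref{prop: invertible in hoTw and quasi-isomorphism} once Proposition~\ref{prop: homotopy invertible morphisms for coherent} is available. You are in fact more careful than the paper in flagging the subtlety that $\widetilde{\mathcal{S}}$ forms products in $\text{Qcoh}(X)$ rather than in $\text{Sh}(X)$; the paper does not comment on this point.
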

\begin{proof}
Since we have Proposition \ref{prop: homotopy invertible morphisms for coherent}, the proof is the same as that of Proposition \ref{prop: invertible in hoTw and quasi-isomorphism} in Section \ref{section: twisted complex and dg-enhancement}.
\qed \end{proof}

\begin{corollary}\label{coro: fully faithful of coherent complexes}
If  the cover $\{U_i\}$ is c-good, then the functor $\widetilde{\mathcal{S}}: Ho\text{Tw}^-_{\text{coh}}(X)\to D^-_{\text{coh}}(\text{Qcoh}(X))$ is fully faithful.
\end{corollary}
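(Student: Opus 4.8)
The plan is to transcribe the proof of Corollary~\ref{coro: fully faithful} line by line, replacing each ingredient by its coherent counterpart: Proposition~\ref{prop: existence of resolution for coherent complexes} (coherent twisted resolution) in place of Proposition~\ref{prop: existence of resolution}, Proposition~\ref{prop: homotopy invertible morphisms for coherent} in place of Proposition~\ref{prop: homotopy invertible morphisms}, the dg-category $\text{Tw}^-_{\text{coh}}(X)$ in place of $\text{Tw}_{\text{perf}}(X)$, and $D^-_{\text{coh}}(\text{Qcoh}(X))$ in place of $D_{\text{perf}}(\text{Qcoh}(X))$. Concretely, I would first prove the coherent analogue of the fullness Lemma~\ref{lemma: fullness}: a morphism $\widetilde{\mathcal{S}}(\mathcal{A})\to\widetilde{\mathcal{S}}(\mathcal{B})$ in $D^-_{\text{coh}}(\text{Qcoh}(X))$ is represented by a roof consisting of a quasi-isomorphism $\mu\colon\mathcal{P}\to\widetilde{\mathcal{S}}(\mathcal{A})$ and a morphism $\varphi\colon\mathcal{P}\to\widetilde{\mathcal{S}}(\mathcal{B})$; apply the twisting functor $\mathcal{T}$, resolve $\mathcal{T}(\mathcal{P})$ by a twisted coherent complex $\phi\colon\mathcal{E}\xrightarrow{\sim}\mathcal{T}(\mathcal{P})$ using Proposition~\ref{prop: existence of resolution for coherent complexes}, compose with $\gamma_{\mathcal{A}}$ and $\gamma_{\mathcal{B}}$, and then invert the resulting weak equivalence $\gamma_{\mathcal{A}}\circ\mathcal{T}(\mu)\circ\phi\colon\mathcal{E}\to\mathcal{A}$ in $Ho\text{Tw}^-_{\text{coh}}(X)$ using Proposition~\ref{prop: homotopy invertible morphisms for coherent}; the composite $\gamma_{\mathcal{B}}\circ\mathcal{T}(\varphi)\circ\phi\circ(\gamma_{\mathcal{A}}\circ\mathcal{T}(\mu)\circ\phi)^{-1}$ is the required preimage. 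Faithfulness is the transcription of Lemma~\ref{lemma: faithfulness} in exactly the same way. By Proposition~\ref{prop: invertible in hoTw and quasi-isomorphism for coherent}, which is already available, weak equivalences between twisted coherent complexes are exactly the morphisms inverted by $\widetilde{\mathcal{S}}$, so the two lemmas together yield the corollary.

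Before this can be run, however, I must check that the inputs to Lemmas~\ref{lemma: fullness} and~\ref{lemma: faithfulness} survive the replacement of $\mathcal{S}$ by $\widetilde{\mathcal{S}}$: namely the natural transformations $\tau\colon\mathrm{id}\to\widetilde{\mathcal{S}}\mathcal{T}$ and $\gamma\colon\mathcal{T}\widetilde{\mathcal{S}}\to\mathrm{id}$, the local property, the weak-equivalence criterion, and the triangle identity of Proposition~\ref{prop: adjunction of T and S}, all for the category $\text{Tw}_{\text{qcoh}}(X)$ rather than $\text{Tw}_{\text{perf}}(X)$. The key point is that Proposition~\ref{prop: associated sheaf locall isom to original twisted complex} continues to hold for $\widetilde{\mathcal{S}}$ and an arbitrary twisted quasi-coherent complex $\mathcal{E}$: the maps $f$, $g$ and the homotopy $h$ appearing in its proof, together with the differential $\delta_a$, are all defined factor by factor through the universal projections of the product and the structure maps $a^{k,1-k}$, and the identity $\delta_a h+h\delta_a=gf-\mathrm{id}$ is checked componentwise, so nothing changes if the product $\prod_{p+q=n}E^q_{i_0}|_{U_{i_0\ldots i_p}}$ is formed in $\text{Qcoh}(X)$ (using Lemma~\ref{lemma: infinite limit of quasi-coherent sheaves}) instead of in $\text{Sh}(X)$. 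Granting this, $\widetilde{\mathcal{S}}(\mathcal{E})|_{U_j}$ is chain homotopy equivalent to $(E^{\bullet}_j,a^{0,1}_j)$ in $K(U_j)$, from which one gets: the analogue of Corollary~\ref{coro: weakly equi and sheafification} for $\widetilde{\mathcal{S}}$; the fact that $\gamma_{\mathcal{E}}\colon\mathcal{T}\widetilde{\mathcal{S}}(\mathcal{E})\to\mathcal{E}$ is a weak equivalence (as in Proposition~\ref{prop: TS and id are isomorphic}); and the fact that $\tau_P\colon P\to\widetilde{\mathcal{S}}\mathcal{T}(P)$ is a quasi-isomorphism --- here one argues that the map $f$ of Proposition~\ref{prop: associated sheaf locall isom to original twisted complex} applied to $\mathcal{T}(P)$ satisfies $f\circ(\tau_P)|_{U_j}=\mathrm{id}$, so $\tau_P|_{U_j}$ is a quasi-isomorphism for every $j$. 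The triangle identity of Proposition~\ref{prop: adjunction of T and S} is once more a formal untangling of definitions.

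The main obstacle is precisely this last compatibility. Because the inclusion $\text{Qcoh}(X)\hookrightarrow\text{Sh}(X)$ does not commute with infinite products, $\widetilde{\mathcal{S}}(\mathcal{E})$ is genuinely different from $\mathcal{S}(\mathcal{E})$ for a bounded-above twisted coherent complex, so one cannot simply invoke Proposition~\ref{prop: associated sheaf locall isom to original twisted complex} nor repeat the \v{C}ech-acyclicity argument of Proposition~\ref{prop: ST and id are quasi-isomorphic} (which would require products in $\text{Qcoh}(X)$ to be exact, a property one should avoid relying on). The way around it, as indicated above, is that the homotopy equivalence in Proposition~\ref{prop: associated sheaf locall isom to original twisted complex} is entirely formal: every map and every homotopy is assembled out of the universal projections and sections of a product together with the maps $a^{k,1-k}$, so the statement and its proof make sense in any category admitting the relevant small products, and only the existence of those products --- not their exactness --- is used. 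Once the local property for $\widetilde{\mathcal{S}}$ is established, the remainder of the argument is a verbatim copy of Section~\ref{subsection: fully faithfull}, with ``perfect'' replaced by ``coherent'' and ``strictly perfect complex'' replaced by ``bounded above complex of finitely generated locally free sheaves''.
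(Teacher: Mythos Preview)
Your proposal is correct and follows exactly the approach the paper indicates: the paper's proof of this corollary is literally the single sentence ``The proof is the same as that of Corollary~\ref{coro: fully faithful}.'' You have gone further than the paper by carefully isolating and addressing the one genuine subtlety the paper glosses over, namely that the local homotopy equivalence of Proposition~\ref{prop: associated sheaf locall isom to original twisted complex} and its consequences must be re-verified for $\widetilde{\mathcal{S}}$ (quasi-coherent products) rather than $\mathcal{S}$; your observation that the maps $f$, $g$, $h$ and the identity $\delta_a h + h\delta_a = gf - \mathrm{id}$ are assembled purely from universal projections and the structure maps $a^{k,1-k}$, hence make sense for products formed in any category, is exactly the right way to handle this.
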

\begin{proof}
The proof is the same as that of Corollary \ref{coro: fully faithful}.
\qed \end{proof}

\begin{theorem}\label{thm: dg-enhancement of coherent}
If the cover $\{U_i\}$ is c-good, then the sheafification functor $\widetilde{\mathcal{S}}: \text{Tw}^-_{\text{coh}}(X)\to \text{Qcoh}^-_{\text{coh}}(X)$ gives  an equivalence of categories
$$
\widetilde{\mathcal{S}}: Ho\text{Tw}^-_{\text{coh}}(X)\to D^-_{\text{coh}}(\text{Qcoh}(X))
$$
\end{theorem}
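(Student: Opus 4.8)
The plan is to deduce the theorem exactly as Theorem \ref{thm: dg-enhancement} was deduced in the perfect case: combine the essential surjectivity of $\widetilde{\mathcal{S}}\colon Ho\text{Tw}^-_{\text{coh}}(X)\to D^-_{\text{coh}}(\text{Qcoh}(X))$ (Corollary \ref{coro: essentially surjective coherent complexes}) with its fully-faithfulness (Corollary \ref{coro: fully faithful of coherent complexes}), and use that a functor which is both is an equivalence of categories. At the top level there is therefore nothing new, so the real work is to make sure that every lemma feeding into those two corollaries survives the passage from two-sided bounded strictly perfect complexes to bounded-above complexes of finitely generated locally free sheaves.

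First I would go through the chain of results in Sections \ref{subsection: essentially surjective} and \ref{subsection: fully faithfull} and verify the boundedness hypotheses. The key technical inputs, Lemma \ref{lemma: acyclic of complex on good covers} and Lemma \ref{lemma: homotopy inverse on good covers}, are already phrased for bounded-above complexes of finitely generated locally free sheaves, and the spectral sequence / $E^{\bullet}$-degree filtration arguments used to build twisted resolutions and homotopy inverses never invoke boundedness below; this is precisely the content of Proposition \ref{prop: homotopy invertible morphisms for coherent} and Proposition \ref{prop: existence of resolution for coherent complexes}, whose proofs reduce to those of Proposition \ref{prop: homotopy invertible morphisms} and Proposition \ref{prop: existence of resolution}. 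Granting these, the fullness and faithfulness arguments (Lemma \ref{lemma: fullness}, Lemma \ref{lemma: faithfulness}) transplant verbatim: one resolves a roof $\mathcal{P}$ equipped with maps to $\widetilde{\mathcal{S}}(\mathcal{A})$ and $\widetilde{\mathcal{S}}(\mathcal{B})$ by applying $\mathcal{T}$, resolving $\mathcal{T}(\mathcal{P})$ by a twisted coherent complex, composing with the $\gamma$-maps $\gamma_{\mathcal{A}}$, $\gamma_{\mathcal{B}}$, and inverting the resulting weak equivalence in $Ho\text{Tw}^-_{\text{coh}}(X)$.

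The one place where the coherent case is not a formality, and the step I expect to be the main obstacle, is the sheafification functor itself. Because each $E^{\bullet}_i$ is only bounded above, the \v{C}ech direction makes $\widetilde{\mathcal{S}}^n(\mathcal{E})=\prod_{p+q=n}E^q_{i_0}|_{U_{i_0\ldots i_p}}$ an infinite product, which must be formed inside $\text{Qcoh}(X)$ --- legitimate by Lemma \ref{lemma: infinite limit of quasi-coherent sheaves} --- rather than inside $\text{Sh}(X)$, and the two products need not coincide. I would therefore re-examine Proposition \ref{prop: associated sheaf locall isom to original twisted complex} and Corollary \ref{coro: weakly equi and sheafification} with $\widetilde{\mathcal{S}}$ in place of $\mathcal{S}$: one must check that the explicit maps $f$, $g$ and the contracting homotopy $h$ constructed there remain well defined for the categorical product in $\text{Qcoh}(X)$, that restriction to $U_j$ interacts correctly with that product, and that the chain homotopy equivalence $\widetilde{\mathcal{S}}(\mathcal{E})|_{U_j}\simeq (E^{\bullet}_j,a^{0,1}_j)$ still holds --- the latter being what makes $\widetilde{\mathcal{S}}(\mathcal{E})$ coherent (Proposition \ref{prop: coherence of sheafification}) and what powers the weak-equivalence criterion used in both corollaries.

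Once the local property of $\widetilde{\mathcal{S}}$ is secured, the rest is bookkeeping. Essential surjectivity follows from the twisted resolution together with the quasi-isomorphism $\tau_P\colon P\to\widetilde{\mathcal{S}}\mathcal{T}(P)$, which holds because the \v{C}ech complex before taking sections is acyclic, a fact unaffected by forming products in $\text{Qcoh}(X)$; fully-faithfulness follows from Proposition \ref{prop: invertible in hoTw and quasi-isomorphism for coherent} together with the $\gamma$-map manipulations above. Combining the two yields the asserted equivalence $\widetilde{\mathcal{S}}\colon Ho\text{Tw}^-_{\text{coh}}(X)\to D^-_{\text{coh}}(\text{Qcoh}(X))$.
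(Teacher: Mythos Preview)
Your proposal is correct and matches the paper's approach exactly: the paper's proof of this theorem is the single sentence ``It is an immediate consequence of Corollary \ref{coro: essentially surjective coherent complexes} and \ref{coro: fully faithful of coherent complexes}.'' Your additional discussion of the infinite-product subtlety for $\widetilde{\mathcal{S}}$ and the need to recheck Proposition \ref{prop: associated sheaf locall isom to original twisted complex} in that setting is more careful than the paper itself, which simply asserts (before Proposition \ref{prop: coherence of sheafification}) that the local property carries over without further comment.
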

\begin{proof}
It is a immediate consequence of Corollary \ref{coro: essentially surjective coherent complexes} and \ref{coro: fully faithful of coherent complexes}.
\qed \end{proof}

\begin{example}
If $X$ is a separated Noetherian scheme and $\{U_i\}$ is an affine cover, then we have an equivalence of categories $\widetilde{\mathcal{S}}:  Ho\text{Tw}^-_{\text{coh}}(X)\to D^-_{\text{coh}}(\text{Qcoh}(X))$.
\end{example}

Then we consider the coherent complexes of arbitrary $\mathcal{O}_X$-modules.

\begin{theorem}\label{thm: dg-enhancement on coherent of O-modules}
If $X$ satisfies the coherent-equivalent  condition and the cover $\{U_i\}$ is c-good, then the sheafification functor $\widetilde{\mathcal{S}}: \text{Tw}^-_{\text{coh}}(X)\to \text{Qcoh}^-_{\text{coh}}(X)$ gives an  equivalence of categories
$$
\widetilde{\mathcal{S}}: Ho\text{Tw}^-_{\text{coh}}(X)\to D^-_{\text{coh}}(X)
$$
\end{theorem}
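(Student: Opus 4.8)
The plan is to obtain this statement purely formally from results already proved in the coherent setting, in exact parallel with the proof of Theorem \ref{thm: dg-enhancement of A-modules} in the perfect case. As always, to check that
$$
\widetilde{\mathcal{S}}: Ho\text{Tw}^-_{\text{coh}}(X)\to D^-_{\text{coh}}(X)
$$
is an equivalence of categories it suffices to verify essential surjectivity and full faithfulness.

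Essential surjectivity is immediate: it is precisely the content of Corollary \ref{coro: essentially surjective coherent complexes arbitrary modules}, whose hypotheses ($\{U_i\}$ c-good and $X$ satisfying the coherent-equivalent condition) are exactly the ones assumed here. For full faithfulness I would first invoke Corollary \ref{coro: fully faithful of coherent complexes}, which says that for a c-good cover the induced functor $Ho\text{Tw}^-_{\text{coh}}(X)\to D^-_{\text{coh}}(\text{Qcoh}(X))$ is fully faithful. The coherent-equivalent condition (Definition \ref{defi: coherent-equivalent condition}) asserts that the natural comparison functor $D^-_{\text{coh}}(\text{Qcoh}(X))\to D^-_{\text{coh}}(X)$ is an equivalence, hence in particular fully faithful. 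Since the functor $\widetilde{\mathcal{S}}: Ho\text{Tw}^-_{\text{coh}}(X)\to D^-_{\text{coh}}(X)$ is the composite of these two fully faithful functors, it is itself fully faithful; this is the coherent analogue of Corollary \ref{coro: fully faithful on non-quasi-coherent sheaves}. Combining with essential surjectivity gives the desired equivalence, exactly as Theorem \ref{thm: dg-enhancement of coherent} was deduced in the quasi-coherent case.

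Since the argument is a one-line combination of earlier results, there is no genuine obstacle at the level of this theorem; all the real work has been pushed into the cited statements — the coherent twisted resolution of Proposition \ref{prop: existence of resolution for coherent complexes} underlying essential surjectivity, the fact that $\widetilde{\mathcal{S}}$ is well-defined because $\text{Qcoh}(X)$ on a quasi-compact quasi-separated scheme has all limits (Lemma \ref{lemma: infinite limit of quasi-coherent sheaves}), and the comparison $D^-_{\text{coh}}(\text{Qcoh}(X))\simeq D^-_{\text{coh}}(X)$ established in Appendix \ref{appendix: quasi-coherent, pseudo-coherent complexes and coherent sheaves}. The only point deserving a moment's care when passing from $\text{Qcoh}$-coefficients to general $\mathcal{O}_X$-module coefficients is that the modified functor $\widetilde{\mathcal{S}}$ (Definition \ref{defi: sheafification functor for q-coherent}) genuinely lands in coherent complexes, which is Proposition \ref{prop: coherence of sheafification}, and that it is compatible with the comparison functor, which is clear from the constructions; I expect this to be the most delicate bookkeeping, though still routine.
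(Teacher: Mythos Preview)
Your argument is correct and matches the paper's own proof, which is even terser: it simply observes that the theorem is an immediate consequence of Theorem \ref{thm: dg-enhancement of coherent} (the equivalence $Ho\text{Tw}^-_{\text{coh}}(X)\simeq D^-_{\text{coh}}(\text{Qcoh}(X))$) composed with the equivalence $D^-_{\text{coh}}(\text{Qcoh}(X))\simeq D^-_{\text{coh}}(X)$ from Definition \ref{defi: coherent-equivalent condition}. You have merely unpacked this one step further into essential surjectivity and full faithfulness, citing the same underlying corollaries that Theorem \ref{thm: dg-enhancement of coherent} itself rests on.
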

\begin{proof}
This is a immediate consequence of Theorem \ref{thm: dg-enhancement of coherent} and Definition \ref{defi: coherent-equivalent condition}.
\qed \end{proof}

\begin{example}
If $X$ is a separated Noetherian scheme with finite Krull dimension and $\{U_i\}$ is an affine cover, then we have an equivalence of categories $\widetilde{\mathcal{S}}: Ho\text{Tw}^-_{\text{coh}}(X)\to D^-_{\text{coh}}(X)$. See Appendix \ref{appendix: quasi-coherent, pseudo-coherent complexes and coherent sheaves} Corollary \ref{coro: spaces satisfy the coherent-equivalent condition}.
\end{example}

\subsection{Degenerate twisted complexes}\label{subsection: splitting}
Recall that in the definition of  twisted complex we have the non-degenerate condition which requires that on each $U_i$ we have
$$
a^{1,0}_{ii}=id
$$
up to homotopy.

It is interesting to see what happens if we drop the non-degenerate condition. In fact we have the following definition.

\begin{definition}\label{defi: generalized twisted and twisted complex}
A generalized twisted complex is the same as a twisted complex except that we do not require $a^{1,0}_{ii}=id$ up to homotopy.

Similarly we have generalized twisted perfect complexes and generalized twisted coherent complexes.

We denote the dg-category of generalized twisted complexes by gTw$(X)$.

Similarly we have $\text{gTw}_{\text{perf}}(X)$ and $\text{gTw}^-_{\text{coh}}(X)$.
\end{definition}

\begin{example}\label{eg: example of generalized twisted complex}
For given $E^{\bullet}_i$'s, we could set all $a^{k,1-k}$'s to be $0$. It definitely satisfies the Maurer-Cartan equation $\delta a+a\cdot a=0$ hence it gives a generalized twisted complex but not a twisted complex unless the $E^{\bullet}_i$'s are all zero.
\end{example}

For generalized twisted complexes we have the following obvious observations
\begin{enumerate}

\item Tw$(X)$ is a full dg-subcategory of gTw$(X)$, $\text{Tw}_{\text{perf}}(X)$ is a full dg-subcategory of $\text{gTw}_{\text{perf}}(X)$ and  $\text{Tw}^-_{\text{coh}}(X)$ is a full dg-subcategory of $\text{gTw}^-_{\text{coh}}(X)$.

\item Nevertheless there is no inclusion relation between $\text{gTw}_{\text{perf}}(X)$ and Tw$(X)$ nor between $\text{gTw}^-_{\text{coh}}(X)$ and Tw$(X)$.

\item The pre-triangulated structure as in Section \ref{subsection: pre-triangulated structure} can be defined on gTw$(X)$, $\text{gTw}_{\text{perf}}(X)$ and  $\text{gTw}^-_{\text{coh}}(X)$ without any change.

\item The weak equivalence in gTw$(X)$ is exactly the same as in Section \ref{subsection: weak equivalence}. Moreover Definition \ref{defi: quasi-isomorphism in Tw} and Proposition \ref{prop: homotopy invertible morphisms} still hold for generalized twisted complexes.

\item   We can define the sheafification functor
$$
\mathcal{S}: \text{gTw}(X)\to \text{Sh}(X)
$$
in the same way as Section \ref{subsection: sheafification functor} Definition \ref{defi: sheaf associated to twisted complex} and \ref{defi: sheafification functor}.
\end{enumerate}

It is not obvious that $\mathcal{S}$ maps a generalized twisted perfect/coherent complex to a perfect/coherent complex. Actually we need some more work. Recall Lemma \ref{lemma: aii is an idempotent} claims that if the $a^{k,1-k}$'s satisfy the Maurer-Cartan equation, then $ a^{1,0}_{ii}: ( E^n_i, a^{0,1}_i)\to (E^n_i, a^{0,1}_i)$ is an idempotent map in the homotopy category $K(U_i)$, i.e. $(a^{1,0}_{ii})^2=a^{1,0}_{ii}$ up to chain homotopy.

It is a classical result that the category $K(U_i)$ is \emph{idempotent complete} (\cite{bokstedt1993homotopy} Proposition 3.2), i.e. for any object $S$ of  $K(U_i)$ and any idempotent $\alpha: S\to S$, there exists a splitting of $\alpha$. More precisely there exists a $T$  in $K(U_i)$ together with $i: T\to S$ and $p: S\to T$ such that
$$
pi=id_T \text{ and } ip=\alpha.
$$
Intuitively such a splitting $T$ can be considered as the image of the map $\alpha$. However in general $T$ is not the naive image of $\alpha$ in the chain complex.

The following proposition gives an explicit construction of the splitting.

\begin{proposition}\label{prop: associated sheaf splitting}
Let $\mathcal{E}=(E^{\bullet}_i,a)$ be a generalized twisted complex and $(\mathcal{S}^{\bullet}(\mathcal{E}),\delta_a)$ be the associated complex of sheaves. Then $(\mathcal{S}^{\bullet}(\mathcal{E}),\delta_a)|_{U_j}$ is a splitting of the idempotent $ a^{1,0}_{jj}: ( E^n_j, a^{0,1}_j)\to (E^n_j, a^{0,1}_j)$, i.e. we have two morphisms
$$
f: (\mathcal{S}^{\bullet}(\mathcal{E}),\delta_a)|_{U_j} \to (E^{\bullet}_{j},a^{0,1}_{j})
$$
and
$$
g: (E^{\bullet}_{j},a^{0,1}_{j}) \to (\mathcal{S}^{\bullet}(\mathcal{E}),\delta_a)|_{U_j}
$$
such that
$$
f\circ g=a^{1,0}_{jj} \text{ and } g\circ f=id_{\mathcal{S}^{\bullet}(\mathcal{E})|_{U_j}} \text{ up to chain homotopy.}
$$
\end{proposition}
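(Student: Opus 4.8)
The plan is to follow, essentially verbatim, the construction and the argument in the proof of Proposition~\ref{prop: associated sheaf locall isom to original twisted complex}, and to observe that the non-degeneracy condition is invoked there only in its very last sentence. Concretely, for $V\subseteq U_j$ I would define $f\colon (\mathcal{S}^{\bullet}(\mathcal{E})(V),\delta_a)\to (E^{\bullet}_j(V),a^{0,1}_j)$ by first projecting onto the $(0,n)$-component $C^0(\mathcal{U},E^n;V)$ and then onto its $j$-summand $E^n_j(V\cap U_j)$; this is a chain map for the same reason as before. Likewise I would define $g\colon (E^{\bullet}_j(V),a^{0,1}_j)\to (\mathcal{S}^{\bullet}(\mathcal{E})(V),\delta_a)$ by declaring its projection onto the $(p,n-p)$-component $E^{n-p}_{i_0}(U_{i_0\ldots i_p}\cap V)$ to be $\epsilon^p\circ a^{p+1,-p}_{i_0\ldots i_p j}$, with the auxiliary sign maps $\epsilon^p=(-1)^p\,\mathrm{id}$ as in Lemma~\ref{lemma: epsilon anticommute with a}. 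The verification that $g$ is a chain map (the analogue of Lemma~\ref{lemma: g is a chaim map}) goes through unchanged, since it uses only the Maurer-Cartan equation together with the anticommutation relations~\eqref{equation: epsilon anticommute with a} and~\eqref{equation: epsilon anticommute with delta}, neither of which refers to $a^{1,0}_{jj}$.

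Next I would compute the two composites. By construction $f\circ g$ selects the $p=0$, $i_0=j$ term of $g$, namely $\epsilon^0 a^{1,0}_{jj}=a^{1,0}_{jj}$, so $f\circ g=a^{1,0}_{jj}$ \emph{on the nose}; in the earlier proof this was then further simplified to $\mathrm{id}$ using the non-degeneracy hypothesis, and here we simply stop one step earlier. For the other composite I would take the same degree $-1$ operator $h\colon\mathcal{S}^{\bullet}(\mathcal{E})(V)\to\mathcal{S}^{\bullet-1}(\mathcal{E})(V)$ given by $(hc)_{i_0\ldots i_k}:=(-1)^k c_{i_0\ldots i_k j}$, and recompute $\delta_a h+h\delta_a$ term by term. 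The sign bookkeeping (the analogues of~\eqref{equation: delta_a h} and~\eqref{equation: h delta_a}) is identical to the earlier computation, which again nowhere used non-degeneracy, and subtracting the two formulas yields $[\delta_a h c+h\delta_a c]_{i_0\ldots i_k}=-c_{i_0\ldots i_k}+(-1)^k a^{k+1,-k}_{i_0\ldots i_k j}(c_j)$, whose second term is exactly $g(fc)_{i_0\ldots i_k}$. Hence $g\circ f-\mathrm{id}=\delta_a h+h\delta_a$, i.e. $g\circ f=\mathrm{id}$ up to chain homotopy, as required.

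Finally I would record that this is precisely a splitting of the idempotent $a^{1,0}_{jj}$: by Lemma~\ref{lemma: aii is an idempotent} this map is idempotent in $K(U_j)$, and the pair $(f,g)$ with $f\circ g=a^{1,0}_{jj}$ and $g\circ f=\mathrm{id}$ exhibits $(\mathcal{S}^{\bullet}(\mathcal{E}),\delta_a)|_{U_j}$ as an object of $K(U_j)$ realizing that splitting. I do not expect a genuine obstacle here, since the mathematical content is already contained in Proposition~\ref{prop: associated sheaf locall isom to original twisted complex}; the one point that must be checked carefully is that every ingredient of that proof — the definitions of $f$, $g$ and $h$, the chain-map verifications, and the homotopy identity — is logically independent of the non-degeneracy hypothesis, so that the honest output of the same calculation is $f\circ g=a^{1,0}_{jj}$ rather than $f\circ g\simeq\mathrm{id}$.
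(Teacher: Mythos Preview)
Your proposal is correct and matches the paper's own proof exactly: the paper simply states that the argument is identical to that of Proposition~\ref{prop: associated sheaf locall isom to original twisted complex}, except that $f\circ g=a^{1,0}_{jj}$ is no longer homotopic to the identity. Your write-up spells out this observation in more detail, but the strategy and content are the same.
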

\begin{proof}
The proof is exactly the same as that of Proposition \ref{prop: associated sheaf locall isom to original twisted complex} except that here $f\circ g=a^{1,0}_{jj}$ does not necessarily equal to $id$, not even up to homotopy.
\qed \end{proof}

With the help of Proposition \ref{prop: associated sheaf splitting} we can get the following result.

\begin{corollary}\label{coro: perfectness of sheafification generalized twisted complex}
If $\mathcal{E}=(E^{\bullet},a)$ is a generalized twisted perfect (or twisted coherent) complex, then the sheafification $\mathcal{S}^{\bullet}(\mathcal{E})$ is a perfect (or coherent, respectively) complex of sheaves on $(X,\mathcal{O}_X)$. In other words the sheafification functor $\mathcal{S}$ restricts to $\text{gTw}_{\text{perf}}(X)$ (or $ \text{gTw}^-_{\text{coh}}(X)$, respectively) and gives the following dg-functor
$$
\mathcal{S}: \text{gTw}_{\text{perf}}(X)\to \text{Qcoh}_{\text{perf}}(X).
$$
and
$$
\mathcal{S}: \text{gTw}^-_{\text{coh}}(X)\to \text{Qcoh}_{\text{coh}}(X).
$$
\end{corollary}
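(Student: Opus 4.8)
The plan is to deduce the statement directly from Proposition \ref{prop: associated sheaf splitting}, combined with the standard fact that the perfect (respectively bounded above coherent) complexes form a \emph{thick} subcategory of the derived category — in particular one that is closed under retracts. First I would recall that, by Definition \ref{defi: perfect complex} (respectively Definition \ref{defi: coherent complex and strictly coherent complex}), being perfect (respectively coherent) is a local condition on $X$, so it suffices to verify it after restricting to each member $U_j$ of the cover. Proposition \ref{prop: associated sheaf splitting} provides morphisms $f$ and $g$ with $f\circ g = a^{1,0}_{jj}$ and $g\circ f = \mathrm{id}$ up to chain homotopy; in particular $g\circ f$ is the identity in $K(U_j)$, which exhibits $(\mathcal{S}^{\bullet}(\mathcal{E}),\delta_a)|_{U_j}$ as a retract of $(E^{\bullet}_j, a^{0,1}_j)$ in $K(U_j)$, hence as a direct summand of $(E^{\bullet}_j, a^{0,1}_j)$ in $D(\mathcal{O}_{U_j}\text{-mod})$.

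For a generalized twisted perfect complex, $E^{\bullet}_j$ is strictly perfect and hence perfect on $U_j$; for a generalized twisted coherent complex, $E^{\bullet}_j$ is a bounded above complex of finitely generated locally free sheaves and hence coherent on $U_j$. The key input is then that a retract of a perfect complex is perfect — the perfect complexes form a triangulated subcategory of $D(\mathcal{O}_{U_j}\text{-mod})$ closed under direct summands (see \cite{stacks-project}, \cite{thomason1990higher}) — and, in the coherent case, that on a separated Noetherian scheme the bounded above complexes with coherent cohomology form a thick subcategory of $D^-(\mathcal{O}_{U_j}\text{-mod})$, the point being simply that a direct summand of a coherent sheaf on a Noetherian scheme is again coherent. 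Applying this on every $U_j$ and then using locality, we conclude that $\mathcal{S}^{\bullet}(\mathcal{E})$ is a perfect (respectively coherent) complex of $\mathcal{O}_X$-modules.

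It remains to check that the image actually consists of quasi-coherent sheaves, so that the functor lands in $\text{Qcoh}_{\text{perf}}(X)$ (respectively $\text{Qcoh}^-_{\text{coh}}(X)$). In the perfect case this is Remark \ref{remark: sheafification of twisted perfect complex is quasi-coherent}: the argument there uses only that each $E^{\bullet}_i$ is bounded and that the cover is locally finite, and neither of these facts involves the non-degeneracy condition, so it applies verbatim to $\text{gTw}_{\text{perf}}(X)$. In the coherent case one must, as in Section \ref{subsection: twisted coherent complexes}, replace $\mathcal{S}$ by the modified functor $\widetilde{\mathcal{S}}$ of Definition \ref{defi: sheafification functor for q-coherent}, forming the (now infinite) products inside $\text{Qcoh}(X)$ via Lemma \ref{lemma: infinite limit of quasi-coherent sheaves}; quasi-coherence of $\widetilde{\mathcal{S}}^{\bullet}(\mathcal{E})$ then follows exactly as in Proposition \ref{prop: coherence of sheafification}.

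The only genuinely non-formal step is the passage from ``retract in the homotopy category $K(U_j)$'' to ``perfect/coherent complex'': this is where one must invoke idempotent-completeness of $K(U_j)$ (Proposition \ref{prop: associated sheaf splitting} already uses it implicitly) together with stability of the perfect and coherent subcategories under retracts. Once those structural facts are in hand, everything else is a formal combination of Proposition \ref{prop: associated sheaf splitting}, locality of perfectness/coherence, and the quasi-coherence bookkeeping already carried out for the non-degenerate case.
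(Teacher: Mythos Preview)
Your proposal is correct and follows essentially the same route as the paper: both arguments reduce to Proposition \ref{prop: associated sheaf splitting} together with the fact that a retract of a perfect (respectively coherent) complex is again perfect (respectively coherent). The only cosmetic difference is that the paper phrases this closure property as ``$K_{\text{perf}}(U_j)$ is idempotent complete since it consists of compact objects,'' whereas you invoke thickness of the perfect subcategory in $D(\mathcal{O}_{U_j}\text{-mod})$ directly; your formulation is arguably cleaner, and you also spell out the quasi-coherence verification (via Remark \ref{remark: sheafification of twisted perfect complex is quasi-coherent} and Definition \ref{defi: sheafification functor for q-coherent}) more explicitly than the paper does.
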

\begin{proof}
Since $\mathcal{E}=(E^{\bullet},a)$ is a generalized twisted perfect complex, for each $U_j$ the complex $(E^{\bullet}_j,a^{0,1}_j)$ is a two-side bounded complex which consists of locally free finitely generated $\mathcal{O}_X$-modules, i.e. $(E^{\bullet}_j,a^{0,1}_j)$ is an object in $K_{\text{perf}}(U_j)$. We know that $K_{\text{perf}}(U_j)$ is also idempotent complete since it consists of compact objects in $K(U_j)$.  Proposition \ref{prop: associated sheaf splitting} tells us that $\mathcal{S}^{\bullet}(\mathcal{E})|_{U_j}$ is a splitting of idempotent $a^{1,0}_{jj}$ hence $\mathcal{S}^{\bullet}(\mathcal{E})|_{U_j}$ is perfect on $U_j$ Moreover this is true for any member $U_j$ of the open cover, therefore $\mathcal{S}^{\bullet}(\mathcal{E})$ is a perfect complex of sheaves on $(X,\mathcal{O}_X)$.

The same proof works for twisted coherent complexes.
\qed \end{proof}

\begin{corollary}\label{coro: essentially surjective generalized twisted complex}
\begin{enumerate}[a.]
\item If the cover $\{U_i\}$ is p-good, then the functor
$$
\mathcal{S}: Ho(\text{gTw}_{\text{perf}}(X))\to D_{\text{perf}}(\text{Qcoh}(X))
$$
is essentially surjective.

\item If the cover $\{U_i\}$ is c-good, then the functor
$$
\mathcal{S}: Ho(\text{gTw}^-_{\text{coh}}(X))\to D_{\text{coh}}(\text{Qcoh}(X))
$$
is essentially surjective.
\end{enumerate}
\end{corollary}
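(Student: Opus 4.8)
The plan is to reduce the statement immediately to the non-degenerate case already treated. First I would recall the observation made just above the statement that $\text{Tw}_{\text{perf}}(X)$ is a full dg-subcategory of $\text{gTw}_{\text{perf}}(X)$, and likewise $\text{Tw}^-_{\text{coh}}(X)$ is a full dg-subcategory of $\text{gTw}^-_{\text{coh}}(X)$. Passing to homotopy categories this yields fully faithful functors $Ho\text{Tw}_{\text{perf}}(X)\hookrightarrow Ho(\text{gTw}_{\text{perf}}(X))$ and $Ho\text{Tw}^-_{\text{coh}}(X)\hookrightarrow Ho(\text{gTw}^-_{\text{coh}}(X))$ that are compatible with $\mathcal{S}$ on both sides (and $\mathcal{S}$ makes sense on the larger categories precisely by Corollary \ref{coro: perfectness of sheafification generalized twisted complex}). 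Since the essential image of a functor can only grow when its source is enlarged, it suffices to know that $\mathcal{S}$ is already essentially surjective on the smaller category.

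For part (a) this is exactly Corollary \ref{coro: essentially surjective}: when $\{U_i\}$ is p-good, $\mathcal{S}\colon Ho\text{Tw}_{\text{perf}}(X)\to D_{\text{perf}}(\text{Qcoh}(X))$ is essentially surjective, so every object of $D_{\text{perf}}(\text{Qcoh}(X))$ already lies in the essential image of $\mathcal{S}$ restricted to $Ho\text{Tw}_{\text{perf}}(X)$, hence a fortiori in the essential image of $\mathcal{S}\colon Ho(\text{gTw}_{\text{perf}}(X))\to D_{\text{perf}}(\text{Qcoh}(X))$. For part (b) the argument is verbatim, with Corollary \ref{coro: essentially surjective coherent complexes} replacing Corollary \ref{coro: essentially surjective} and ``c-good'' replacing ``p-good''.

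If a self-contained argument is preferred, one can instead repeat the proof of Corollary \ref{coro: essentially surjective}: given $P=(S^{\bullet},d)$ in $\text{Qcoh}_{\text{perf}}(X)$, its associated twisted complex $\mathcal{T}(P)$ has $a^{1,0}_{ii}=\mathrm{id}$ and is therefore already an object of $\text{Tw}(X)\subset\text{gTw}(X)$; Proposition \ref{prop: existence of resolution} then supplies a twisted perfect complex $\mathcal{E}$ with a weak equivalence $\phi\colon\mathcal{E}\xrightarrow{\sim}\mathcal{T}(P)$, and Corollary \ref{coro: weakly equi and sheafification} together with Proposition \ref{prop: ST and id are quasi-isomorphic} identify $\mathcal{S}(\mathcal{E})$ with $P$ in the derived category; the coherent case uses Proposition \ref{prop: existence of resolution for coherent complexes} in place of Proposition \ref{prop: existence of resolution}. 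In either approach there is no genuine obstacle: the essential-surjectivity content is fully carried by the non-degenerate results, and the only point specific to generalized twisted complexes, namely that $\mathcal{S}$ still takes values in perfect (resp.\ coherent) complexes, is Corollary \ref{coro: perfectness of sheafification generalized twisted complex}.
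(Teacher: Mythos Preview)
Your proposal is correct and follows exactly the paper's approach: the paper also observes that $\text{Tw}_{\text{perf}}(X)$ is a subcategory of $\text{gTw}_{\text{perf}}(X)$ on which the two sheafification functors coincide, and then invokes Corollary~\ref{coro: essentially surjective} (and its coherent analogue) to conclude. Your write-up is simply more detailed, explicitly citing Corollary~\ref{coro: perfectness of sheafification generalized twisted complex} for the well-definedness of $\mathcal{S}$ on the larger category and spelling out the ``essential image only grows'' step that the paper leaves implicit.
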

\begin{proof}
By Corollary \ref{coro: essentially surjective} we already know that $\mathcal{S}: Ho\text{Tw}_{\text{perf}}(X)\to D_{\text{perf}}(\text{Qcoh}(X))$  is essentially surjective. Since $\text{Tw}_{\text{perf}}(X)$ is a subcategory of $\text{gTw}_{\text{perf}}(X)$ and the functors $\mathcal{S}$'s coincide on $\text{Tw}_{\text{perf}}(X)$, the claim is obviously true.

The same proof works for twisted coherent complexes.
\qed \end{proof}

However,  $\mathcal{S}$ does not induce a fully faithful functor
$$
\mathcal{S}: Ho(\text{gTw}_{\text{perf}}(X))\to D_{\text{perf}}(\text{Qcoh}(X))
$$
nor
$$
\mathcal{S}: Ho(\text{gTw}^-_{\text{coh}}(X))\to D_{\text{coh}}(\text{Qcoh}(X)).
$$

The main reason of the failure is that we no longer have the same result as in Corollary \ref{coro: weakly equi and sheafification} for generalized twisted complexes and Proposition \ref{prop: homotopy invertible morphisms} does not hold for generalized  twisted complexes either.

In fact, if $\mathcal{E}$ and $\mathcal{F}$ are generalized twisted coherent complexes, then the fact that $\mathcal{S}(\phi): \mathcal{S}(\mathcal{E})\to \mathcal{S}(\mathcal{F})$ is a quasi-isomorphism does not imply $\phi: \mathcal{E}\to \mathcal{F}$ is invertible in the homotopy category.

\begin{example}\label{eg: quasi-isom not homotopy invertible}
For a counter-example, let $\mathcal{E}=(E^{\bullet}_i,0)$ be non-zero, two-side bounded graded locally free finitely generated $\mathcal{O}_X$-modules on each $U_i$ with all $a$'s equal to $0$. Let $\mathcal{F}$ simply be $0$ and $\phi$ be the zero map. It is clear that $\phi^{0,0}_i: E^{\bullet}_i\to 0$ is not a quasi-isomorphism hence $\phi$ cannot be invertible in $Ho(\text{gTw}_{\text{perf}}(X))$. However by Proposition \ref{prop: associated sheaf splitting} it is not difficult to show that $\mathcal{S}(\mathcal{E})$ is an acyclic complex hence $\mathcal{S}(\phi)=0: \mathcal{S}(\mathcal{E})\to 0$ is a quasi-isomorphism.
\end{example}

The above discussion tells us that $(\text{gTw}_{\text{perf}}(X),\mathcal{S})$ (or $(\text{gTw}^-_{\text{coh}}(X),\mathcal{S})$) is not a dg-enhancement of $D_{\text{perf}}(\text{Qcoh}(X))$ (or $D_{\text{coh}}(\text{Qcoh}(X))$ respectively). Nevertheless, gTw$(X)$   has its own interests and may be further studied in the future.

\subsection{Quillen adjunction}\label{subsection: quillen adjunction}
The proof of dg-enhancement in this paper is more or less a by-hand proof. Nevertheless in this section we would like to briefly mention a more categorical approach which we hope can give a systematic proof of the result in this paper.

We have defined two dg-functors
$$
\mathcal{S}: \text{Tw}(X)\to \text{Sh} (X)
$$
and
$$
\mathcal{T}:  \text{Sh} (X)\to \text{Tw}(X).
$$
We have found the relations between them in  Proposition \ref{prop: ST and id are quasi-isomorphic},  Proposition \ref{prop: TS and id are isomorphic} and Proposition \ref{prop: adjunction of T and S}.

On the other hand we have the injective and projective \emph{model structure} on Sh$(X)$, see \cite{hovey2001model}. Moreover in Definition \ref{defi: quasi-isomorphism in Tw} we already have a notion of weak equivalence in Tw$(X)$ and we wish to further construct a suitable \emph{model structure} on Tw$(X)$ with the weak equivalence as above, which, together with the suitable model structure on Sh$(X)$, makes $\mathcal{S}$ and $\mathcal{T}$ a \emph{Quillen adjunction}  and further a Quillen equivalence
$$
\mathcal{S}: \text{Tw}(X) \leftrightarrows \text{Sh} (X) :\mathcal{T}.
$$

The Quillen adjunction, if exists, will reveal deeper information on twisted complexes. It is also hoped that the dg-enhancement result can be also proved in this approach.


\appendix
\appendixpage
\addappheadtotoc

\section{SOME DISCUSSIONS ON COMPLEXES OF SHEAVES} \label{appendix: quasi-coherent, pseudo-coherent complexes and coherent sheaves}
\subsection{Pseudo-coherent complexes and coherent complexes}
Recall that we have a definition of coherent complexes in Section \ref{subsection: twisted coherent complexes}.

\begin{definition}[Definition \ref{defi: coherent complex and strictly coherent complex}]
Let $(X,\mathcal{O}_X)$ be a separated, Noetherian scheme. A complex $\mathcal{S}^{\bullet}$ of $\mathcal{O}_X$-modules is \emph{coherent} if for any point $x\in X$, there exists an open neighborhood $U$ of $x$ and a bounded above complex of finite rank, locally free sheaves  $\mathcal{E}^{\bullet}_{U}$ on $U$ such that the restriction $\mathcal{S}^{\bullet}|_U$ is isomorphic to $\mathcal{E}^{\bullet}_{U}$ in $D(\mathcal{O}_X|_U-\text{mod})$, the derived category of sheaves of $\mathcal{O}_X$-modules on $U$.
\end{definition}

For general locally ringed spaces $(X,\mathcal{O}_X)$, this version of coherent complex does not behave well and we have the following  definition.

\begin{definition}\label{defi: pseudo-coherent complex}[\cite{thomason1990higher} Definition 2.1.1, 2.2.6 or \cite{berthelot1966seminaire} Expos\'{e} I, 2.1, 2.3]
\begin{enumerate}[a.]
\item For an integer $m$, a complex $\mathcal{E}^{\bullet}$ of $\mathcal{O}_X$-modules on $X$ is called \emph{strictly $m$-pseudo-coherent} if $\mathcal{E}^i$ is a locally free finitely generated $\mathcal{O}_X$-module for $i\geq m$ and $\mathcal{E}^i=0$ for $i$ sufficiently large.

\item A complex $\mathcal{E}^{\bullet}$ of $\mathcal{O}_X$-modules on $X$ is called \emph{strictly  pseudo-coherent} if it is $m$-strictly-pseudo-coherent for all $m$, i.e. it is a bounded above complex of locally free finitely generated $\mathcal{O}_X$-modules.

\item For any integer $m$, a complex $\mathcal{E}^{\bullet}$ of $\mathcal{O}_X$-modules on $X$ is called m\emph{-pseudo-coherent} if for any point $x\in X$ there exists an open neighborhood $x\in U \subset X$ and a morphism of complexes
$\alpha  : \mathcal{P}_{U}^{\bullet} \to \mathcal{E}^{\bullet}|_{U}$
where $\mathcal{P}_U$ is strictly $m$-pseudo-coherent on $U$ and
$\alpha$ is a quasi-isomorphism on $U$.

\item We say $\mathcal{E}^\bullet$ is \emph{pseudo-coherent}
if it is $m$-pseudo-coherent for all $m$.
\end{enumerate}
\end{definition}

We may hope that a pseudo-coherent complex is locally quasi-isomorphic to a strictly pseudo-coherent complex. However according to \cite{thomason1990higher} 2.2.7 it is not true in general:
\begin{quote}
For a pseudo-coherent complex of general $\mathcal{O}_X$-modules, there will locally be $n$-quasi-isomorphisms with a strictly pseudo-coherent complex, but the local neighborhoods where the $n$-quasi-isomorphisms are defined may shrink as $n$ goes to $-\infty$, and so may fail to exist in the limit. So there may not be a local quasi-isomorphism with a strict pseudo-coherent complex.
\end{quote}

As a result, the definition of pseudo-coherent complex and our definition of coherent complex are not equivalent in general. Nevertheless if we assume  $X$ is a Noetherian scheme, then we have the following proposition.

\begin{proposition}[\cite{thomason1990higher} 2.2.8, \cite{berthelot1966seminaire} Expos\'{e} I Section 3]
A complex $E^{\bullet}$ of $\mathcal{O}_X$-modules on a Noetherian scheme $X$ is pseudo-coherent if and only if $E^{\bullet}$  is cohomologically bounded above and all the $H^k(E^{\bullet})$ are coherent $\mathcal{O}_X$-modules, i.e. $E^{\bullet}$ is pseudo-coherent if and only if $E^{\bullet}\in D^-_{\text{coh}}(X)$.
\end{proposition}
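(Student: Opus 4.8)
The plan is to prove both implications after reducing to the affine case: pseudo-coherence and coherence of a sheaf are local conditions, and since a Noetherian scheme is quasi-compact, cohomological boundedness above follows from its local version by passing to a finite subcover. So I fix throughout an affine open $U=\operatorname{Spec} A\subset X$ with $A$ Noetherian. The statement is classical (\cite{thomason1990higher} 2.2.8, \cite{berthelot1966seminaire} Expos\'e~I Section~3) and the task is only to assemble standard facts.

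For the direction ``pseudo-coherent $\Rightarrow E^{\bullet}\in D^-_{\text{coh}}(X)$'': fix an integer $k$ and choose $m\leq k-1$. By Definition \ref{defi: pseudo-coherent complex}, near any point there is a strictly $m$-pseudo-coherent complex $\mathcal{P}^{\bullet}$ with a morphism $\mathcal{P}^{\bullet}\to E^{\bullet}$ inducing isomorphisms on cohomology in degrees $\geq m$, so $H^k(E^{\bullet})$ is locally isomorphic to $H^k(\mathcal{P}^{\bullet})$. Since $k-1,k,k+1\geq m$, the sheaves $\mathcal{P}^{k-1},\mathcal{P}^{k},\mathcal{P}^{k+1}$ are finitely generated locally free, hence coherent on the Noetherian scheme, and so is the subquotient $H^k(\mathcal{P}^{\bullet})$ of $\mathcal{P}^k$; as coherence is local, $H^k(E^{\bullet})$ is coherent. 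Finally each such $\mathcal{P}^{\bullet}$ is bounded above, so covering $X$ by finitely many of these neighborhoods and taking the largest bound shows $H^k(E^{\bullet})=0$ for $k\gg0$. Hence $E^{\bullet}\in D^-_{\text{coh}}(X)$.

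The substantive direction is ``$E^{\bullet}\in D^-_{\text{coh}}(X)\Rightarrow$ pseudo-coherent''. On $U=\operatorname{Spec} A$ assume $H^i(E^{\bullet})=0$ for $i>n$ and all $H^i(E^{\bullet})$ coherent; for each fixed $m$ I must produce, after possibly shrinking $U$, a strictly $m$-pseudo-coherent complex with an $m$-quasi-isomorphism onto $E^{\bullet}$. I would run the classical downward induction building a finite free resolution: starting from $\mathcal{P}^{\bullet}=0$ and the trivial map $0\to E^{\bullet}$, suppose one has a complex $\mathcal{P}^{\bullet}$ of finite free $\mathcal{O}_X$-modules in degrees $[\ell,n]$ and a map $\phi\colon\mathcal{P}^{\bullet}\to E^{\bullet}$ whose mapping cone $C^{\bullet}$ has coherent cohomology concentrated in degrees $\leq\ell-1$ (for $\ell=n+1$ this is just $C^{\bullet}=E^{\bullet}$). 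Since $H^{\ell-1}(C^{\bullet})$ is coherent, after shrinking $U$ it is a quotient of a finite free module; choosing finitely many generators and lifting them to cocycles of $C^{\ell-1}=\mathcal{P}^{\ell}\oplus E^{\ell-1}$ gives a finite free $\mathcal{P}^{\ell-1}$ equipped with a differential $\mathcal{P}^{\ell-1}\to\mathcal{P}^{\ell}$ and a compatible map $\mathcal{P}^{\ell-1}\to E^{\ell-1}$, extending $\phi$ to degrees $[\ell-1,n]$, and the new mapping cone has coherent cohomology concentrated in degrees $\leq\ell-2$. Iterating down to degree $m$ and stopping yields a complex in degrees $[m,n]$ which is strictly $m$-pseudo-coherent together with an $m$-quasi-isomorphism to $E^{\bullet}$; since $m$ was arbitrary, $E^{\bullet}$ is pseudo-coherent.

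The main obstacle is this second construction. First, the bookkeeping of the shrinking neighborhoods: each downward step may require shrinking $U$, so one genuinely obtains only $m$-pseudo-coherence for each individual $m$, never a single local strict resolution, precisely the phenomenon described in the passage quoted above from \cite{thomason1990higher}, and harmless here since pseudo-coherence is defined $m$ by $m$. Second, one must check that the induction closes up, which rests on two properties of the Noetherian ring $A$: a coherent sheaf is locally a quotient of a finite free module, and the mapping cone of a morphism between complexes with bounded-above coherent cohomology again has bounded-above coherent cohomology, so the cohomology sheaves appearing at each stage remain coherent. Granting these, the remaining verification, namely that the maps assembled above form a complex and that the cohomology of the successive cones behaves as claimed, is routine diagram-chasing in the long exact sequences.
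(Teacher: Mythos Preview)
Your argument is correct. The paper does not supply its own proof here---it simply cites \cite{thomason1990higher} 2.2.8 and \cite{berthelot1966seminaire} Expos\'e~I Section~3---so your sketch is in effect the standard argument those references contain, and there is nothing further to compare. One small sharpening: on an affine open of a Noetherian scheme a coherent sheaf is already globally a quotient of a finite free module, so the shrinking of $U$ in the inductive step is really needed only to lift the chosen generators of $H^{\ell-1}(C^{\bullet})$ to sections of $Z^{\ell-1}(C^{\bullet})$, which need not be quasi-coherent since $E^{\bullet}$ consists of arbitrary $\mathcal{O}_X$-modules.
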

\begin{proof}
See \cite{thomason1990higher} 2.2.8 or \cite{berthelot1966seminaire} Expos\'{e} I Section 3.
\qed \end{proof}

\subsection{Quasi-coherent modules v.s. arbitrary $\mathcal{O}_X$-modules}
It is a subtle but important question  whether we could  replace a complex of $\mathcal{O}_X$-modules by a complex of quasi-coherent modules in the derived categories. In this subsection we collect some results on this topic which can be found in \cite{thomason1990higher} Appendix B and \cite{berthelot1966seminaire} Expos\'{e} II.

\begin{definition}\label{defi: quasi-coherent modules in appendix}
Let $(X,\mathcal{O}_X)$ be a locally ringed space.  A sheaf of $\mathcal{O}_X$-modules $\mathcal{F}$ is called \emph{quasi-coherent} if for every point $x\in X$ there exists an open neighbourhood $x\in U\subset X$ such that $\mathcal{F}|_U$ is isomorphic to the cokernel of a map
$$
\bigoplus_{j\in J} \mathcal{O}_X|_U\to \bigoplus_{i\in I} \mathcal{O}_X|_U.
$$
\end{definition}

\begin{remark}\label{remark: Frechet q-c sheaf}
If $(X,\mathcal{O}_X)$ is a complex manifold, then we need the category of  Fr\'{e}chet quasi-coherent sheaves, which is a variation of the category of quasi-coherent sheaves, see \cite{eschmeier1996spectral} Section 4.3 for more details.
\end{remark}

The natural inclusion $i: \text{Qcoh}(X)\to \text{Sh}(X)$ induces a natural functor
$$
\tilde{i}: D(\text{Qcoh}(X))\to D_{\text{Qcoh}}(X)
$$
where $D_{\text{Qcoh}}(X)$ is the derived category of complexes of $\mathcal{O}_X$-modules with quasi-coherent cohomologies. However the functor $\tilde{i}$ is not necessarily essentially surjective nor fully faithful. The same is true when we restrict to certain subcategories such as perfect complexes or coherent complexes.

Since $\tilde{i}: D(\text{Qcoh}(X))\to D_{\text{Qcoh}}(X)$ is not an equivalence in general, we need to impose some condition on the locally ringed space $(X,\mathcal{O}_X)$ for our purpose. Here are some definitions we use in this paper.

\begin{definition}\label{defi: perfect-equivalent condition in appendix}[See Definition \ref{defi: perfect-equivalent condition} and Definition \ref{defi: coherent-equivalent condition}]
\begin{enumerate}[a.]
\item We say a locally ringed space $(X,\mathcal{O}_X)$ satisfies the \emph{perfect-equivalent condition} if the functor
$$
D_{\text{perf}}(\text{Qcoh}(X))\to D_{\text{perf}}(X)
$$
is an equivalence.
\item We say a locally ringed space $(X,\mathcal{O}_X)$ satisfies the \emph{coherent-equivalent condition} if the functor
$$
D^-_{\text{coh}}(\text{Qcoh}(X))\to D^-_{\text{coh}}(X)
$$
is an equivalence.
\end{enumerate}
\end{definition}

It is important to verify for which $X$ the above condition holds. In fact we have the following result.

\begin{proposition}\label{prop: bounded below quasi-coherent and A-modules equivalent}[\cite{thomason1990higher} Proposition B.16, \cite{berthelot1966seminaire} Expos\'{e} II 3.5]
Let $X$ be either a quasi-compact
and semi-separated scheme, or else a Noetherian scheme. Then the functor
$$
\tilde{i}: D^{+}(\text{Qcoh}(X))\to D^+_{\text{Qcoh}}(X)
$$
is an equivalence, where $D^{+}(\text{Qcoh}(X))$ is the derived category of complexes of  quasi-coherent modules with  bounded below cohomologies, and $D^+_{\text{Qcoh}}(X)$ is the derived category of complexes of $\mathcal{O}_X$-modules with bounded below and quasi-coherent cohomologies.
\end{proposition}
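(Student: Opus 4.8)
The plan is to establish separately that $\tilde{i}$ is essentially surjective onto $D^+_{\text{Qcoh}}(X)$ and that it is fully faithful; since $i\colon \text{Qcoh}(X)\to \text{Sh}(X)$ is exact, $\tilde{i}$ is already defined and visibly lands in $D^+_{\text{Qcoh}}(X)$, so only these two properties remain. The basic tool in both parts is a good supply of resolutions. Fix a finite affine open cover $\mathcal{U}=\{U_1,\dots,U_n\}$ of $X$ (possible since $X$ is quasi-compact). In the quasi-compact semi-separated case every finite intersection $U_I$ is again affine, and for a quasi-coherent sheaf $\mathcal{F}$ the \v{C}ech resolution
$$
0\to \mathcal{F}\to \prod_{i_0}(j_{i_0})_*\bigl(\mathcal{F}|_{U_{i_0}}\bigr)\to \prod_{i_0<i_1}(j_{i_0 i_1})_*\bigl(\mathcal{F}|_{U_{i_0 i_1}}\bigr)\to\cdots
$$
is a bounded resolution (length $\le n$) by quasi-coherent sheaves of the special shape $(j_V)_*\mathcal{M}$, with $V$ affine and $\mathcal{M}$ quasi-coherent on $V$. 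Such sheaves are acyclic for $\Gamma(W,-)$ for every affine open $W$ — by the Leray spectral sequence and the vanishing of higher cohomology of quasi-coherent sheaves on affines — hence compute derived global sections both in $\text{Sh}(X)$ and inside $\text{Qcoh}(X)$; they will be the common resolutions used below. In the Noetherian (possibly non-separated) case one replaces this by an induction on $n$: write $X=W\cup U$ with $U$ affine and $W$ covered by $n-1$ affines, and feed the Mayer--Vietoris triangle for $W$, $U$, $W\cap U$ into the inductive hypothesis (each being covered by $\le n-1$ affines), keeping track of quasi-coherence of pushforwards along the quasi-compact quasi-separated inclusions involved.

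For essential surjectivity I would use the quasi-coherator: the inclusion $i$ admits a right adjoint $Q\colon \text{Sh}(X)\to \text{Qcoh}(X)$, which exists because $\text{Qcoh}(X)$ is a Grothendieck category on the quasi-compact quasi-separated scheme $X$ and $i$ preserves colimits. One derives it to $RQ\colon D^+(\text{Sh}(X))\to D^+(\text{Qcoh}(X))$, and it suffices to prove that the counit $i\,RQ(\mathcal{E}^\bullet)\to \mathcal{E}^\bullet$ is a quasi-isomorphism whenever $\mathcal{E}^\bullet\in D^+_{\text{Qcoh}}(X)$. Using the canonical truncation triangles one reduces to $\mathcal{E}^\bullet$ a single quasi-coherent sheaf $\mathcal{F}$ in one degree; replacing $\mathcal{F}$ by its bounded \v{C}ech resolution $\mathcal{C}^\bullet$ above, and using that the terms $(j_V)_*\mathcal{M}$ are $RQ$-acyclic with $Q\bigl((j_V)_*\mathcal{M}\bigr)=(j_V)_*\mathcal{M}$ (since $Q$ commutes with the affine pushforward and $Qi=\mathrm{id}$ on $\text{Qcoh}(V)$), one gets $RQ(\mathcal{F})\simeq \mathcal{C}^\bullet$ and the counit becomes the canonical quasi-isomorphism $\mathcal{C}^\bullet\to \mathcal{F}$. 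Hence every object of $D^+_{\text{Qcoh}}(X)$ lies in the essential image, and $RQ$ is a quasi-inverse.

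For full faithfulness it is enough to show that, for quasi-coherent $\mathcal{F}$ and $\mathcal{G}$, the comparison map $\mathrm{Ext}^k_{\text{Qcoh}(X)}(\mathcal{F},\mathcal{G})\to \mathrm{Ext}^k_{\mathcal{O}_X}(\mathcal{F},\mathcal{G})$ is an isomorphism for all $k$; the statement for arbitrary bounded-below complexes then follows by a standard d\'evissage on $\mathcal{G}$ (truncation triangles and the five lemma) and a second one on $\mathcal{F}$. Both Ext groups I would compute from the bounded \v{C}ech resolution $\mathcal{C}^\bullet$ of $\mathcal{G}$, using that $R\mathrm{Hom}\bigl(\mathcal{F},(j_V)_*\mathcal{M}\bigr)\cong R\mathrm{Hom}\bigl(\mathcal{F}|_V,\mathcal{M}\bigr)$ (adjunction, computed over the affine $V$) in either category; over an affine scheme the sheaf-Ext of quasi-coherent sheaves agrees with the module-Ext, so the comparison is an isomorphism termwise in $\mathcal{C}^\bullet$, hence after passing to the cohomology of the total complex.

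The genuinely delicate points — the ones I expect to be the main obstacle — are the homological properties of the quasi-coherator: its existence, and above all that $RQ$ on bounded-below complexes is correctly computed by the \v{C}ech resolution, i.e. that the special sheaves $(j_V)_*\mathcal{M}$ are $RQ$-acyclic; and, in the non-separated Noetherian case, the bookkeeping in the Mayer--Vietoris induction. All of this is carried out in detail in \cite{thomason1990higher} Appendix B and \cite{berthelot1966seminaire} Expos\'{e} II.
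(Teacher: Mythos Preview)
Your proposal is correct and in fact more informative than the paper's own treatment: the paper simply writes ``See the proof of \cite{thomason1990higher} Proposition B.16'' and gives no argument, whereas you sketch precisely the strategy carried out in that reference (quasi-coherator $RQ$ as right adjoint, \v{C}ech resolutions by affine pushforwards as the common acyclic resolutions, and the Mayer--Vietoris induction for the Noetherian case). So your approach is the same as the cited one, just made explicit.
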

\begin{proof}
See the proof of \cite{thomason1990higher} Proposition B.16.
\qed \end{proof}

\begin{corollary}\label{coro: spaces satisfy the perfect-equivalent condition}
Any  quasi-compact
and semi-separated or Noetherian scheme satisfies the perfect-equivalent condition.
\end{corollary}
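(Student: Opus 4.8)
The plan is to deduce this directly from Proposition \ref{prop: bounded below quasi-coherent and A-modules equivalent}, using the fact that perfect complexes live in the bounded, quasi-coherent-cohomology range where that proposition provides an equivalence. First I would record that a perfect complex is cohomologically bounded with quasi-coherent cohomology sheaves: by Remark \ref{remark: stronger defi of perfect complex} it is locally quasi-isomorphic to a bounded complex of finite locally free sheaves, and both boundedness and quasi-coherence of cohomology are local conditions preserved under restriction. Consequently $D_{\text{perf}}(X)$ is a full triangulated subcategory of $D^+_{\text{Qcoh}}(X)$, while $D_{\text{perf}}(\text{Qcoh}(X))$ is a full triangulated subcategory of $D^+(\text{Qcoh}(X))$, and the natural functor appearing in Definition \ref{defi: perfect-equivalent condition} is simply the restriction of $\tilde i\colon D^+(\text{Qcoh}(X))\to D^+_{\text{Qcoh}}(X)$, which is an equivalence by Proposition \ref{prop: bounded below quasi-coherent and A-modules equivalent} when $X$ is quasi-compact and semi-separated, or Noetherian.

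It then remains to see that under $\tilde i$ the two subcategories of perfect complexes correspond. Since $\tilde i$ is already fully faithful on the $D^+$ level, its restriction to these full subcategories stays fully faithful, so the only real point is essential surjectivity of $\tilde i\colon D_{\text{perf}}(\text{Qcoh}(X))\to D_{\text{perf}}(X)$. One inclusion is immediate from the definitions in Definition \ref{defi: derived categories with quasi-coherent components}: any complex of quasi-coherent sheaves representing an object of $D_{\text{perf}}(\text{Qcoh}(X))$ is by construction perfect as a complex of $\mathcal{O}_X$-modules, so $\tilde i$ does land in $D_{\text{perf}}(X)$. Conversely, given a perfect complex $\mathcal{P}$ of $\mathcal{O}_X$-modules, I would regard it as an object of $D^+_{\text{Qcoh}}(X)$ and invoke essential surjectivity of $\tilde i$ to produce an honest complex $Q^{\bullet}$ of quasi-coherent sheaves with $\tilde i[Q^{\bullet}]\cong \mathcal{P}$ in $D^+_{\text{Qcoh}}(X)$, hence also in $D(\mathcal{O}_X-\text{mod})$; since $\tilde i$ acts on a genuine complex merely by forgetting quasi-coherence, $Q^{\bullet}$ itself is isomorphic to $\mathcal{P}$ in $D(\mathcal{O}_X-\text{mod})$. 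Because perfectness is an isomorphism-invariant property in $D(\mathcal{O}_X-\text{mod})$ (it is checked locally and up to quasi-isomorphism), $Q^{\bullet}$ is perfect, and being a complex of quasi-coherent sheaves it represents an object of $D_{\text{perf}}(\text{Qcoh}(X))$ mapping to $\mathcal{P}$.

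The one place that needs a little care — the main (modest) obstacle — is exactly this compatibility of the word \emph{perfect} on the two sides: one must be sure that a complex of quasi-coherent sheaves which is perfect as an $\mathcal{O}_X$-module complex genuinely defines an object of $D_{\text{perf}}(\text{Qcoh}(X))$, and that boundedness of perfect complexes really does place everything inside the $D^+$ range of Proposition \ref{prop: bounded below quasi-coherent and A-modules equivalent}. Both are arranged by the definitions already in place, once one notes that $\tilde i$ on an actual complex is just the inclusion $\text{Qcoh}(X)\hookrightarrow \text{Sh}(X)$ followed by localization. With these remarks, $\tilde i$ restricts to an equivalence $D_{\text{perf}}(\text{Qcoh}(X))\overset{\sim}{\to} D_{\text{perf}}(X)$, which is precisely the statement that such $X$ satisfies the perfect-equivalent condition.
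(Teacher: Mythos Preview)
Your argument is correct and follows the same route as the paper: reduce to Proposition \ref{prop: bounded below quasi-coherent and A-modules equivalent} by observing that perfect complexes have bounded quasi-coherent cohomology, then check the equivalence restricts to the perfect subcategories. The paper's proof is a one-liner that leaves the restriction step implicit, whereas you spell out fully faithfulness and essential surjectivity on the perfect level; that extra care is welcome. One small wording issue: you say boundedness of cohomology is a ``local condition'', but local boundedness only yields global boundedness because the schemes in question are quasi-compact (Noetherian schemes are quasi-compact, and this is assumed in the semi-separated case), so it would be cleaner to invoke quasi-compactness explicitly at that step, as the paper does.
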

\begin{proof}
On a quasi-compact scheme, any perfect complex has bounded below cohomology, hence by Proposition \ref{prop: bounded below quasi-coherent and A-modules equivalent} any quasi-compact scheme satisfies the perfect-equivalent condition  .
\qed \end{proof}

However a bounded above coherent complex is not necessarily bounded below hence we can no longer use Proposition \ref{prop: bounded below quasi-coherent and A-modules equivalent}. Nevertheless we have the same result under additional conditions.

\begin{proposition}\label{prop: quasi-coherent and A-modules equivalent}[\cite{thomason1990higher} B.17]
Let $X$ be either a Noetherian scheme of finite Krull dimension  or
a semi-separated scheme with underlying space a Noetherian space of
finite Krull dimension. Then the functor
$$
\tilde{i}: D(\text{Qcoh}(X))\to D_{\text{Qcoh}}(X)
$$
is an equivalence.
\end{proposition}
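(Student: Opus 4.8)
The plan is to bootstrap from the bounded below comparison of Proposition \ref{prop: bounded below quasi-coherent and A-modules equivalent} to the unbounded case, the one new ingredient being a uniform bound on cohomological dimension. First I would record that under either hypothesis the underlying topological space of $X$ is a Noetherian space of finite Krull dimension $d$ (a Noetherian space is automatically quasi-compact, and a semi-separated scheme is quasi-separated, so in both cases $X$ is quasi-compact and quasi-separated). Grothendieck's vanishing theorem then gives $H^p(U,\mathcal{F})=0$ for every open $U\subseteq X$, every sheaf of abelian groups $\mathcal{F}$ on $U$, and every $p>d$; in particular the derived global-section functor and all the comparison functors in sight have cohomological amplitude $\le d$.

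Next I would use this to establish that both $D_{\text{Qcoh}}(X)$ and $D(\text{Qcoh}(X))$ are \emph{left-complete} for the standard $t$-structure: for every object $E$ the canonical map $E\to R\lim_n \tau_{\ge -n}E$ is an isomorphism. This is the step where the finite cohomological dimension is genuinely needed, since infinite products are not exact in either category; the bound $d$ controls the discrepancy between the naive product and the derived product (equivalently, it forces the coherator $RQ\colon D_{\text{Qcoh}}(X)\to D(\text{Qcoh}(X))$, right adjoint to $\tilde i$, to have bounded amplitude), which is exactly what makes the Milnor $\lim^1$-sequences computing $H^{\ast}(R\lim_n)$ behave as though products were exact. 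With left-completeness in hand, both triangulated categories are recovered as homotopy limits of their bounded below truncations, and $\tilde i$ is compatible with these truncations.

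Finally I would conclude. For essential surjectivity: given $E\in D_{\text{Qcoh}}(X)$, each $\tau_{\ge -n}E$ lies in $D^{+}_{\text{Qcoh}}(X)$, hence is $\tilde i$ of an object of $D^{+}(\text{Qcoh}(X))$ by Proposition \ref{prop: bounded below quasi-coherent and A-modules equivalent}; lifting the whole truncation tower compatibly (using that $\tilde i$ is an equivalence on $D^{+}$) and forming $R\lim_n$ inside $D(\text{Qcoh}(X))$ produces a preimage of $E$, with left-completeness showing the comparison map is an isomorphism. For full faithfulness: compute $\text{Hom}_{D_{\text{Qcoh}}(X)}(A,B)$ via the Milnor sequence attached to $B\simeq R\lim_n\tau_{\ge -n}B$, observe that $\text{Hom}(A,\tau_{\ge -n}B)=\text{Hom}(\tau_{\ge -n-1}A,\tau_{\ge -n}B)$ because morphisms between complexes whose cohomologies lie in disjoint ranges vanish, reduce each of these bounded below Hom-groups by Proposition \ref{prop: bounded below quasi-coherent and A-modules equivalent}, and identify the resulting Milnor sequence with the one computing the corresponding group in $D(\text{Qcoh}(X))$. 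The main obstacle, and the reason the finite-Krull-dimension hypothesis is not cosmetic, is precisely this control of homotopy limits on the quasi-coherent side---left-completeness of $D(\text{Qcoh}(X))$ together with boundedness of the coherator---after which everything follows formally from the bounded below equivalence; a complete treatment is in \cite{thomason1990higher} B.17.
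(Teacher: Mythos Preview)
The paper does not give its own proof: it simply writes ``See \cite{thomason1990higher} B.17.'' Your proposal supplies a correct outline of that argument---finite Krull dimension gives a uniform bound on cohomological dimension via Grothendieck vanishing, which forces left-completeness of both $t$-structures (equivalently, bounded amplitude of the coherator), so the unbounded statement reduces to the bounded-below case already handled by Proposition~\ref{prop: bounded below quasi-coherent and A-modules equivalent}. This is exactly the strategy of the cited reference, so there is nothing to correct; your write-up is strictly more informative than the paper's, while still deferring to \cite{thomason1990higher} for the full details.
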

\begin{proof}
See   \cite{thomason1990higher}  B.17.
\qed \end{proof}

\begin{corollary}\label{coro: spaces satisfy the coherent-equivalent condition}
Any Noetherian scheme of finite Krull dimension  or
a semi-separated scheme with underlying space a Noetherian space of
finite Krull dimension satisfies the coherent-equivalent condition.
\end{corollary}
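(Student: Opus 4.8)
The plan is to deduce this directly from Proposition \ref{prop: quasi-coherent and A-modules equivalent}, which already furnishes an equivalence $\tilde{i} : D(\text{Qcoh}(X)) \to D_{\text{Qcoh}}(X)$ for the two classes of schemes in question, and then to cut this equivalence down to the full subcategories of bounded-above coherent objects on each side. The one structural fact that makes this go is that $\tilde{i}$ is induced by the \emph{exact} inclusion $i : \text{Qcoh}(X) \hookrightarrow \text{Sh}(X)$, hence commutes with the cohomology functors: $H^k(\tilde{i}(C^\bullet)) \cong i(H^k(C^\bullet))$ for every $C^\bullet \in D(\text{Qcoh}(X))$. In particular $\tilde{i}$ sends a complex with bounded-above cohomology to one with bounded-above cohomology, and a complex whose cohomology sheaves are coherent to one whose cohomology sheaves are coherent. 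Since coherent sheaves are in particular quasi-coherent, $D^-_{\text{coh}}(X)$ is a full triangulated subcategory of $D_{\text{Qcoh}}(X)$, and by the preceding remark $\tilde{i}$ restricts to a functor $D^-_{\text{coh}}(\text{Qcoh}(X)) \to D^-_{\text{coh}}(X)$.

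Next I would verify this restricted functor is an equivalence. Full faithfulness is immediate: $D^-_{\text{coh}}(\text{Qcoh}(X))$ and $D^-_{\text{coh}}(X)$ are \emph{full} subcategories of $D(\text{Qcoh}(X))$ and $D_{\text{Qcoh}}(X)$ respectively, and $\tilde{i}$ is fully faithful on the ambient categories by Proposition \ref{prop: quasi-coherent and A-modules equivalent}. For essential surjectivity, take $E^\bullet \in D^-_{\text{coh}}(X)$; regarding it as an object of $D_{\text{Qcoh}}(X)$ and applying the equivalence, there is $C^\bullet \in D(\text{Qcoh}(X))$ with $\tilde{i}(C^\bullet) \cong E^\bullet$. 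Because $\tilde{i}$ preserves cohomology, $H^k(C^\bullet) = 0$ for $k \gg 0$ and each $H^k(C^\bullet)$ is coherent, so $C^\bullet$ already lies in $D^-_{\text{coh}}(\text{Qcoh}(X))$. Hence $\tilde{i} : D^-_{\text{coh}}(\text{Qcoh}(X)) \to D^-_{\text{coh}}(X)$ is an equivalence, which is precisely the coherent-equivalent condition for such $X$.

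The argument is essentially formal once Proposition \ref{prop: quasi-coherent and A-modules equivalent} is granted, so I do not anticipate a real obstacle. The only point that genuinely needs care is the compatibility of the two meanings of \emph{coherent} that appear on the two sides of $\tilde{i}$ — a coherent object of the abelian category $\text{Qcoh}(X)$ versus a coherent $\mathcal{O}_X$-module — which is what lets me transport the defining condition of $D^-_{\text{coh}}$ back and forth; on a Noetherian scheme this is standard, and for a semi-separated scheme with Noetherian underlying space it follows because coherence is a local, Noetherian-type finiteness condition. Granting that, the remaining verifications (that $\tilde{i}$, being derived from an exact functor, is compatible with truncation and with the formation of cohomology sheaves) are routine and I would dispatch them quickly.
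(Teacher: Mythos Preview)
Your approach is correct and is exactly the paper's: the paper's proof reads in full ``It is a direct corollary of Proposition \ref{prop: quasi-coherent and A-modules equivalent}.'' You have simply spelled out the formal step of restricting the equivalence $\tilde{i}$ to the full subcategories cut out by the cohomological conditions (bounded above, coherent cohomology), which is the content behind the word ``direct''.
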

\begin{proof}
It is a direct corollary of Proposition \ref{prop: quasi-coherent and A-modules equivalent}.
\qed \end{proof}


\section{GOOD COVERS OF LOCALLY RINGED SPACES}\label{appendix: good covers}
We discuss good covers of locally ringed spaces in this appendix. Recall that we have the following definitions.

\begin{definition}\label{defi: good space in appendix}[Definition \ref{defi: good space}]
\begin{enumerate}[a.]
\item A locally ringed space $(U,\mathcal{O}_U)$ is called \emph{p-good} if it satisfies the following two conditions
\begin{enumerate}[1.]
\item For every perfect complex $\mathcal{P}^{\bullet}$ on $U$ which consists of quasi-coherent sheaves, there exists a strictly perfect complex $\mathcal{E}^{\bullet}$ together with a quasi-isomorphism $u: \mathcal{E}^{\bullet}\overset{\sim}{\to}\mathcal{P}^{\bullet}$.
\item The higher cohomologies of quasi-coherent sheaves vanish, i.e. $H^k(U,\mathcal{F})=0$ for any quasi-coherent sheaf $\mathcal{F}$ on $U$ and any $k\geq 1$.
\end{enumerate}

\item A locally ringed space $(U,\mathcal{O}_U)$ is called \emph{c-good} if the first condition above is replaced by
For every coherent complex $\mathcal{C}^{\bullet}$ on $U$ which consists of quasi-coherent sheaves, there exists a   bounded above complex of finitely generated locally free sheaves $\mathcal{E}^{\bullet}$ together with a quasi-isomorphism $v: \mathcal{E}^{\bullet}\overset{\sim}{\to}\mathcal{C}^{\bullet}$. The second condition remains the same.
\end{enumerate}
\end{definition}

\begin{definition}\label{defi: good covers in appendix}[Definition \ref{defi: good covers}]
Let $(X,\mathcal{O}_X)$ be a locally ringed space, an open cover $\{U_i\}$ of $X$ is called a \emph{p-good cover} (or  \emph{c-good cover}) if $(U_I,\mathcal{O}_X|_{U_I})$ is a p-good space (or c-good space, respectively) for any finite intersection $U_I$ of the open cover.
\end{definition}

The definition of good cover is not too restrictive since we have the following examples of ringed spaces with good covers.
\begin{itemize}
\item $(X,\mathcal{O}_X)$ is a separated scheme, then any affine cover is both p-good and c-good. In fact on a separated scheme the intersection of two affine open subsets is still affine hence Condition $2.$ in Definition \ref{defi: good space in appendix} is obviously satisfied and Condition $1.$ is proved in \cite{thomason1990higher} Proposition 2.3.1.

\item $(X,\mathcal{O}_X)$ is a complex manifold with $\mathcal{O}_X$ the sheaf of holomorphic functions. In these case a  Stein cover is both p-good and c-good. Actually on   complex manifolds  we should use the definition  of \emph{Fr\'{e}chet quasi-coherent sheaves}, which is a variation of ordinary quasi-coherent sheaves, see \cite{eschmeier1996spectral} Section 4. A  Stein manifold satisfies Condition $2.$ by Proposition 4.3.3 in \cite{eschmeier1996spectral}, and Condition $1.$ can be proved in the same way as the argument in \cite{thomason1990higher} Section 2.

\item $(X,\mathcal{O}_X)$ is a paracompact topological space with \emph{soft} structure sheaf $\mathcal{O}_X$. Then any contractible open cover is both  p-good and c-good.
\end{itemize}

\bibliography{twistedbib}{}
\bibliographystyle{plain}
\end{document}